\newtheorem{theorem}{Theorem}[section]
\newtheorem{corollary}[theorem]{Corollary}
\newtheorem{lemma}[theorem]{Lemma}
\newtheorem{proposition}[theorem]{Proposition}
\theoremstyle{definition}
\newtheorem{definition}[theorem]{Definition}
\newtheorem{example}[theorem]{Example}
\newtheorem{remark}[theorem]{Remark}
\newtheorem{question}[theorem]{Question}
\newtheorem{notation}[theorem]{Notation}
\newtheorem*{lemma*}{Lemma}
\newtheorem*{proposition*}{Proposition}
\newtheorem*{theorem*}{Theorem}
\newtheorem*{corollary*}{Corollary}
\newcommand{\im}{\operatorname{im}}
\newcommand{\End}{\operatorname{End}}
\newcommand{\Ab}{\operatorname{Ab}}
\newcommand{\id}{\operatorname{id}}
\newcommand{\BS}{\operatorname{BS}}
\newcommand{\Aut}{\operatorname{Aut}}
\newcommand{\ZZ}{\operatorname{Z}}
\begin{document}

\title{Hopfian wreath products \\ and the stable finiteness conjecture}
\author{Henry Bradford and Francesco Fournier-Facio}
\date{\today}
\maketitle

\begin{abstract}
    We study the Hopf property for wreath products of finitely generated groups, focusing on the case of an abelian base group. 
    Our main result establishes a strong connection between this problem and Kaplansky's stable finiteness conjecture. Namely, the latter holds true if and only if for every finitely generated abelian group $A$ and every finitely generated Hopfian group $\Gamma$ the wreath product $A \wr \Gamma$ is Hopfian. In fact, we characterize precisely when $A \wr \Gamma$ is Hopfian, in terms of the existence of one-sided units in certain matrix algebras over $\mathbb{F}_p[\Gamma]$, for every prime $p$ occurring as the order of some element in $A$.

    A tool in our arguments is the fact that fields of positive characteristic locally embed into matrix algebras over $\mathbb{F}_p$ thus reducing the stable finiteness conjecture to the case of $\mathbb{F}_p$.
    A further application of this result shows that the validity of Kaplansky's stable finiteness conjecture is equivalent to a version of Gottschalk's surjunctivity conjecture for additive cellular automata.
\end{abstract}

\section{Introduction}

Given two discrete groups $\Delta$ and $\Gamma$, their (standard, restricted) \emph{wreath product} is the group $\Delta \wr \Gamma \coloneqq \bigoplus_\Gamma \Delta \rtimes \Gamma$, where $\Gamma$ acts on the direct sum by shifting coordinates. Wreath products are part of the standard toolbox in modern combinatorial and geometric group theory, providing examples of interesting behaviour, while still being intuitive and susceptible to explicit computations.

\medskip

One of the first and most influential occurrences of wreath products in combinatorial group theory was in Gruenberg's study of residual properties of solvable groups \cite{grunberg}. Along the way, he showed the that a wreath product of two (not necessarily solvable) groups $\Delta \wr \Gamma$ is residually finite if and only if $\Delta$ and $\Gamma$ are residually finite, and either $\Delta$ is abelian or $\Gamma$ is finite. When $\Delta$ and $\Gamma$ are both finitely generated, $\Delta \wr \Gamma$ is also finitely generated, and so in particular if $\Delta$ is abelian and $\Gamma$ is residually finite, then $\Delta \wr \Gamma$ is Hopfian.

Recall that a group is \emph{Hopfian} if every self-epimorphism is an automorphism. They are named after Hopf, who first showed that surface groups are Hopfian (see \cite[p. 415]{CGT}). The fact that finitely generated residually finite groups are Hopfian was proven by Mal'cev \cite{malcev}. The first examples of finitely generated non-Hopfian groups were given by B.H. Neumann \cite{fg:nonhopfian}, and of finitely presented ones by Higman \cite{fp:nonhopfian}, but the simplest and most important example is probably that of Baumslag--Solitar groups, such as $\BS(2, 3) = \langle a, t \mid ta^2t^{-1} = a^3 \rangle$ \cite{BS, BS:full, BS:full:detail}. The class of Hopfian groups has received much attention in geometric group theory ever since, and the connection with residual finiteness is an important reason for this. A standout example is the fact that hyperbolic groups are Hopfian \cite{hyphopf1, hyphopf2}, which may be seen as a weaker version of the longstanding conjecture that all hyperbolic groups are residually finite. In another direction, let us mention that the Hopf property is undecidable \cite{undecidable}, and this is not a straightforward application of the Adian--Rabin Theorem, since the Hopf property is neither Markov nor co-Markov \cite{hopf:embedding}.

\medskip

The starting point of this paper is the following general question:

\begin{question}
\label{q:main}

Let $\Delta, \Gamma$ be finitely generated groups. When is the wreath product $\Delta \wr \Gamma$ Hopfian?
\end{question}

Despite the fact that finitely generated residually finite groups are Hopfian, the two properties are very different in flavour. Indeed, residual finiteness ensures the existence of many normal subgroups, while Hopficity prevents the existence of certain normal subgroups. The most striking illustration of this, is that an infinite simple group cannot be residually finite, but it is Hopfian. Therefore, without any assumption of residual finiteness, trying to establish Hopficity of a group requires a different approach. Hence, while Question \ref{q:main} is related to Gruenberg's result, the techniques needed to attack it are bound to be different.

Let us start by mentioning some easy reductions for Question \ref{q:main}:

\begin{proposition}[Lemmata \ref{lem:D:Hopfian} and \ref{lem:G:Hopfian}]
\label{intro:prop:G:Hopfian}

If $\Delta \wr \Gamma$ is Hopfian, then $\Delta$ is Hopfian. If moreover $\Delta$ is abelian, then $\Gamma$ is Hopfian.
\end{proposition}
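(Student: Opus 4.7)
The plan is to argue both implications by contraposition: from a surjective self-endomorphism with non-trivial kernel of $\Delta$ (respectively $\Gamma$), I construct one of $\Delta \wr \Gamma$. The first assertion is essentially automatic. Given $\phi : \Delta \to \Delta$ surjective with non-trivial kernel, applying $\phi$ coordinatewise on $\bigoplus_\Gamma \Delta$ commutes with the shift action, and therefore pairs with the identity on $\Gamma$ to give a surjective endomorphism of $\Delta \wr \Gamma$; its kernel already contains the non-trivial element supported at $1 \in \Gamma$ with value any $d \in \ker(\phi) \setminus \{1\}$.

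For the second assertion, assume $\Delta$ is abelian and let $\psi : \Gamma \to \Gamma$ be a surjective endomorphism with non-trivial kernel $K$. The natural strategy is to combine $\psi$ on the $\Gamma$-factor with a suitable base map $\tilde{\phi} : \bigoplus_\Gamma \Delta \to \bigoplus_\Gamma \Delta$. The obstacle is that, in order for the pair to assemble into a homomorphism of $\Delta \wr \Gamma$, such a $\tilde{\phi}$ must intertwine the shift action of $\Gamma$ on the source with the $\psi$-twisted shift action on the target.

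The key idea is that each non-empty fibre $\psi^{-1}(h)$ is a coset of $K$, so for any finitely supported $f : \Gamma \to \Delta$ the expression
\[
\tilde{\phi}(f)(h) \;:=\; \sum_{g \in \psi^{-1}(h)} f(g)
\]
is a finite sum, and abelianness of $\Delta$ then ensures $\tilde{\phi}$ is a well-defined group homomorphism. A short change-of-variables calculation yields the twisted equivariance $\tilde{\phi}(g \cdot f) = \psi(g) \cdot \tilde{\phi}(f)$, so that $\Phi : \Delta \wr \Gamma \to \Delta \wr \Gamma$, $(f,g) \mapsto (\tilde{\phi}(f), \psi(g))$, is a well-defined endomorphism. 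It is surjective (lift the support of the target through $\psi$ by choosing one representative in each non-empty fibre), and for any non-identity $g \in K$ the element $(0,g)$ lies non-trivially in its kernel.

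The real subtlety is engineering $\tilde{\phi}$ to be equivariant for the twisted action, and it is precisely the abelianness of $\Delta$ that resolves it: both the well-definedness of the fibre-sum and the homomorphism property of $\tilde{\phi}$ rely on $\Delta$ being abelian, explaining why this hypothesis enters exactly where it does.
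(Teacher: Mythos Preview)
Your proof is correct and follows essentially the same approach as the paper: for the first part you apply $\phi$ coordinatewise (the paper's Lemma~\ref{lem:D:Hopfian}), and for the second part your fibre-sum map $\tilde{\phi}$ is exactly the paper's $\varphi^*$ from Lemma~\ref{lem:G:Hopfian}, with the same equivariance computation and the same identification of a non-trivial kernel element. The only cosmetic difference is that the paper verifies surjectivity by observing $\Phi(\Delta_{(\gamma)}) = \Delta_{(\varphi(\gamma))}$, whereas you lift supports by choosing fibre representatives---these are the same argument.
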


Given this, and the relevance of abelian groups in Gruenberg's result, for most of the paper we focus on the case in which $\Delta$ is a finitely generated abelian group. To make this assumption clear, we denote it by $A$ instead.

\medskip

As mentioned above, by Gruenberg's result, if $\Gamma$ is finitely generated 
residually finite then $A \wr \Gamma$ Hopfian. 
In light of this, and of Proposition \ref{intro:prop:G:Hopfian} above, 
it is natural to conjecture that $\Gamma$ finitely generated \emph{Hopfian} 
implies $A \wr \Gamma$ Hopfian. 
Our main result shows that this conjecture is equivalent to one of the most longstanding open problems in group theory:

\begin{theorem}[Theorem \ref{thm:main:general}]
\label{intro:thm:main:general}

The following are equivalent:
\begin{enumerate}
    \item For every finitely generated abelian group $A$ and every finitely generated Hopfian group $\Gamma$, the wreath product $A \wr \Gamma$ is Hopfian.
    \item Kaplansky's direct finiteness conjecture holds.
\end{enumerate}
\end{theorem}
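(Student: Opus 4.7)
The plan is to translate self-epimorphisms of $A \wr \Gamma$ into invertibility questions for matrices over $\mathbb{F}_p[\Gamma]$, via the natural $\mathbb{Z}[\Gamma]$-module structure on the base subgroup $B := \bigoplus_\Gamma A$.

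For $(2) \Rightarrow (1)$, I fix a surjective $\phi: A \wr \Gamma \to A \wr \Gamma$. The first step is to show $\phi(B) \subseteq B$ via a characteristic description of $B$ inside $A \wr \Gamma$, so that $\phi$ descends to a surjective $\bar\phi: \Gamma \to \Gamma$; by Hopficity of $\Gamma$, $\bar\phi$ is an automorphism. After composing with an automorphism lift of $\bar\phi^{-1}$, I may assume $\bar\phi = \id$, whereupon abelianness of $B$ yields that $\phi|_B: B \to B$ is a surjective $\Gamma$-equivariant endomorphism, and injectivity of $\phi|_B$ implies that of $\phi$. Writing $A = \mathbb{Z}^r \oplus \bigoplus_p T_p$ with $T_p$ the $p$-primary torsion gives a characteristic decomposition of $B$ preserved by $\phi|_B$, so it suffices to treat each summand separately.

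On the free piece $\mathbb{Z}[\Gamma]^r$, surjectivity yields $M \in \MM_r(\mathbb{Z}[\Gamma])$ with a left inverse $N$. Tensoring with $\mathbb{Q}$, Kaplansky's classical trace argument (unconditional in characteristic zero) produces a two-sided inverse over $\MM_r(\mathbb{Q}[\Gamma])$, which by uniqueness equals $N$ and hence already lies in $\MM_r(\mathbb{Z}[\Gamma])$; so $M$ is a unit and $\phi|_B$ is injective here. On each $p$-primary piece, a filtration by powers of $p$ together with Nakayama-type lifting through the nilpotent ideal $(p) \subset \mathbb{Z}/p^k[\Gamma]$ reduces the question to the analogous one for $\mathbb{F}_p[\Gamma]^{n_p}$, namely that left-invertible matrices in $\MM_{n_p}(\mathbb{F}_p[\Gamma])$ are two-sided units. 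This is precisely stable direct finiteness for $\mathbb{F}_p[\Gamma]$, which is hypothesis (2).

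For $(1) \Rightarrow (2)$ I argue contrapositively. Failure of Kaplansky, combined with the abstract's reduction from arbitrary positive-characteristic fields to $\mathbb{F}_p$ and the localness of direct finiteness, produces a prime $p$, integer $n$, finitely generated group $G$, and matrices $N, M \in \MM_n(\mathbb{F}_p[G])$ with $NM = I$ but $MN \neq I$. I embed $G$ in a two-generator simple group $\Gamma$ by a classical HNN construction; $\Gamma$ is trivially Hopfian, and failure of direct finiteness is inherited by $\MM_n(\mathbb{F}_p[\Gamma])$. Taking $A = \mathbb{F}_p^n$, the base is $B \cong \mathbb{F}_p[\Gamma]^n$ as a free left $\mathbb{F}_p[\Gamma]$-module. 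Right multiplication by $M$ defines a $\Gamma$-equivariant endomorphism $\psi$ of $B$, extending to $\phi(b, g) := (\psi(b), g)$ on $A \wr \Gamma$. Surjectivity is immediate from $\psi(vN) = v$, while any nonzero row of $I - MN$ lies in $\ker \psi$, so $\phi$ is a non-injective self-epimorphism of a group with Hopfian $\Gamma$-quotient.

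The main obstacle is the first reduction in $(2) \Rightarrow (1)$: showing that $B$ is invariant under every self-epimorphism of $A \wr \Gamma$, and that $\phi$ can be normalised to fix $\Gamma$. A priori a surjective endomorphism need not respect the semidirect-product structure, so this step requires either a characteristic description of $B$ (e.g.\ as a torsion-like radical, a locally finite radical, or a canonical maximal abelian normal subgroup under suitable hypotheses) or a delicate cocycle argument. The ensuing matrix reductions over $\mathbb{F}_p[\Gamma]$, $\mathbb{Z}/p^k[\Gamma]$, and $\mathbb{Z}[\Gamma]$, while technical, follow standard Nakayama and characteristic-zero comparison patterns.
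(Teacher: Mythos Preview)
Your plan matches the paper's architecture closely: the reduction to $\Gamma$-equivariant module endomorphisms of the base, the split into free-abelian and $p$-primary pieces, the characteristic-zero argument for $\mathbb{Z}[\Gamma]^r$, the exponent-induction for $T_p$, and the contrapositive via embedding into a finitely generated simple group all appear in the paper in essentially the form you describe.

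The one point where your sketch diverges from the paper is the treatment of the obstacle you yourself flag, namely showing $\phi(B)\subseteq B$. Your proposed attacks (a characteristic description of $B$ as a torsion radical, locally finite radical, or maximal abelian normal subgroup) do not work in general: for $A=\mathbb{Z}$ the base is neither torsion nor locally finite, and the paper exhibits (for finite $\Gamma$) a non-basic automorphism, so $B$ is not characteristic. The paper's resolution is instead to classify all abelian normal subgroups $N\leq A\wr\Gamma$ that are not contained in $B$: such $N$ can only exist when $A$ has exponent $2$ and $\Gamma$ has a central involution $\gamma$, and then $N$ is forced to equal the kernel of the canonical map $A\wr\Gamma\to A\wr(\Gamma/\langle\gamma\rangle)$. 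From this one derives a surjection $\Gamma/\langle\gamma\rangle\twoheadrightarrow A\wr(\Gamma/\langle\gamma\rangle)$, and abelianizing contradicts Hopficity of finitely generated abelian groups. So the argument is not ``$B$ is characteristic'' but rather ``$\phi(B)$, being abelian and normal, is forced into $B$ by an explicit obstruction analysis''. Your filtration-and-Nakayama handling of the $p$-primary piece is a legitimate variant of the paper's block-upper-triangular computation; it is slightly coarser (it does not yield the sharp bound $\max_i d_i$ on the matrix size) but entirely adequate for the equivalence with Kaplansky.
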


\begin{remark}
All of the statements in this paper assume $\Gamma$ to be finitely generated. Other less elegant assumptions on $\Gamma$ also ensure that our main results hold (see Remark \ref{rem:fg}). 
\end{remark}

Recall that a ring with identity $R$ is \emph{directly finite} if every element with a one-sided inverse is a unit; equivalently, if $xy = 1$ implies $yx = 1$ (Lemma \ref{lem:left:right}). It is \emph{stably finite} if the matrix rings $\mathbb{M}_d(R)$ are directly finite for all $d \geq 1$. Kaplansky's direct (resp. stable) finiteness conjecture asserts that the group ring $\mathbb{F}[\Gamma]$ is directly (resp. stably) finite for every group $\Gamma$ and every field $\mathbb{F}$. It turns out that the two conjectures are equivalent \cite{df:dykemajuschenko} and hold over fields of characteristic $0$ \cite[p. 122]{kaplansky}. The case of positive characteristic is still wide open. It is known to hold for sofic groups \cite{df:elekszabo}, and more generally for surjunctive groups \cite{df:phung}, therefore a counterexample to the stable finiteness conjecture would also imply the existence of a non-sofic group, and would disprove Gottshalk's surjunctivity conjecture \cite{cell}. Moreover, a \emph{torsion-free} counterexample to the direct finiteness conjecture would also produce the first counterexample to Kaplansky's idempotent and zero-divisor conjectures, as well as a new counterexample to Kaplansky's unit conjecture, which was recently disproven by Gardam \cite{unit} (see also \cite{unit2}).

Because of this, Theorem \ref{intro:thm:main:general} is all the more striking: The innocent-looking group-theoretic Question \ref{q:main} shows a surprising connection to group rings and turns out to be at least as hard as some of the main open problems in modern group theory.

\medskip

A connection between morphisms of wreath products and properties of units in group rings had already appeared in the literature. This can be traced back to the remark that surjunctive groups satisfy Kaplansky's stable finiteness conjecture over finite fields \cite{df:elekszabo}, to which we shall also return. For a more recent example, in \cite{genevoistessera} the authors study morphisms to wreath products via geometric methods, and apply this to give a description of the automorphism group of $A \wr \Gamma$, where $A$ is a finite cyclic group and $\Gamma$ is a one-ended finitely presented group, in terms of the units of the group ring $A[\Gamma]$. Their starting point is similar to ours, but their analysis is completely geometric. Moreover, we believe that our methods could help to extend their analysis to the case in which the base is an arbitrary finite abelian group.

\medskip

Theorem \ref{intro:thm:main:general} is a consequence of the following more precise statement:

\begin{theorem}[Proposition \ref{prop:decomposition}, Corollary \ref{cor:solutionfreeabelian}, Theorem \ref{AbBaseIffThm}]
\label{intro:thm:main}

Let $\Gamma$ be a finitely generated group. Let $A$ be a finitely generated abelian group, decomposed as $A_0 \oplus \bigoplus_p A_p$, where $A_0$ is free abelian, and $A_p$ is a $p$-group for each prime $p$. Decompose further $A_p$ as $(\mathbb{Z}/p\mathbb{Z})^{d^p_1} \oplus \cdots \oplus (\mathbb{Z}/p^m\mathbb{Z})^{d^p_m}$, where $d^p_i \in \mathbb{N}$. Then the following are equivalent.
\begin{enumerate}
    \item $A \wr \Gamma$ is Hopfian.
    \item $\Gamma$ is Hopfian, and for each prime $p$ the ring $\mathbb{M}_{\max_i(d^p_i)}(\mathbb{F}_p[\Gamma])$ is directly finite.
\end{enumerate}
\end{theorem}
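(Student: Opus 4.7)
The plan is to combine the three cited intermediate results. Proposition \ref{prop:decomposition} reduces the Hopf property of $A \wr \Gamma$ to the separate cases of $A_0 \wr \Gamma$ and each $A_p \wr \Gamma$; Corollary \ref{cor:solutionfreeabelian} disposes of the free abelian case by invoking Kaplansky's classical theorem in characteristic zero, which implies that all matrix rings over $\mathbb{Z}[\Gamma]$ are directly finite. The remaining content is Theorem \ref{AbBaseIffThm}, the $p$-group case, on which I would focus the real work.

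The first step for Theorem \ref{AbBaseIffThm} is to reduce to a module-theoretic question. Let $B = \bigoplus_\Gamma A_p$ be the base. I would argue that a surjective endomorphism $\varphi$ of $A_p \wr \Gamma$ can, after composition with a suitable automorphism, be assumed to preserve $B$ and induce the identity on $\Gamma = (A_p \wr \Gamma)/B$; this uses Hopficity of $\Gamma$ to upgrade the induced surjection on $\Gamma$ to an automorphism. Then $\varphi|_B$ is a surjective $\mathbb{Z}[\Gamma]$-linear endomorphism of $B \cong \bigoplus_i (\mathbb{Z}/p^i\mathbb{Z}[\Gamma])^{d^p_i}$, and the Hopf property becomes equivalent to injectivity of every such map.

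For $(2) \Rightarrow (1)$ I would argue by downward induction on the layer index. Writing $\varphi|_B$ as a block matrix $(\varphi_{ij})$, note that any $\mathbb{Z}$-linear map $\mathbb{Z}/p^j\mathbb{Z} \to \mathbb{Z}/p^i\mathbb{Z}$ with $j < i$ has image in $p^{i-j}\mathbb{Z}/p^i\mathbb{Z} \subseteq p\mathbb{Z}/p^i\mathbb{Z}$, so all off-diagonal contributions to the top block (with $i = m$ the largest index having $d^p_m > 0$) vanish modulo $p$. Surjectivity of $\varphi|_B$ therefore forces $\varphi_{mm} \bmod p$ to be a surjective endomorphism of $\mathbb{F}_p[\Gamma]^{d^p_m}$; direct finiteness of $\mathbb{M}_{d^p_m}(\mathbb{F}_p[\Gamma])$ (a consequence of the hypothesis on $\mathbb{M}_{\max_i d^p_i}(\mathbb{F}_p[\Gamma])$) makes this reduction invertible, and since $p$ is nilpotent in $\mathbb{Z}/p^m\mathbb{Z}[\Gamma]$ the block $\varphi_{mm}$ is itself invertible. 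An elementary change of basis on $B$ then clears the off-diagonal entries involving the top layer and reduces to the analogous problem on $\bigoplus_{i < m}(\mathbb{Z}/p^i\mathbb{Z}[\Gamma])^{d^p_i}$; iteration closes the argument.

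The converse $(1) \Rightarrow (2)$ I would prove by contrapositive. Failure of direct finiteness of $\mathbb{M}_d(\mathbb{F}_p[\Gamma])$ for some $d \leq \max_i d^p_i$ yields $k$ with $d \leq d^p_k$ and a right-invertible non-invertible matrix, which lifts to $\mathbb{M}_d(\mathbb{Z}/p^k\mathbb{Z}[\Gamma])$ (right-invertibility survives reduction with nilpotent kernel, while left-invertibility cannot appear in a quotient if absent to begin with). The resulting surjective non-injective module endomorphism on $d$ copies of $\mathbb{Z}/p^k\mathbb{Z}[\Gamma]$, extended by the identity on the other summands of $B$ and trivially on $\Gamma$, yields a surjective non-injective self-endomorphism of $A_p \wr \Gamma$ and hence of $A \wr \Gamma$. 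The main obstacle I anticipate is the inductive block analysis in the $(2) \Rightarrow (1)$ direction, where the off-diagonal maps between summands of different $p$-power orders must be controlled with care; the appearance of $\max_i d^p_i$ rather than $\sum_i d^p_i$ reflects precisely that the induction handles one layer at a time, requiring direct finiteness only of the largest individual block rather than of the combined dimension.
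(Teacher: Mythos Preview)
Your overall architecture---combining Proposition \ref{prop:decomposition}, Corollary \ref{cor:solutionfreeabelian}, and a $p$-group analysis---matches the paper's, and your reduction to a surjective $\mathbb{Z}[\Gamma]$-linear endomorphism of $B$ via Proposition \ref{prop:basic:morphism} is the same first move. The substantive difference lies in how you handle Theorem \ref{AbBaseIffThm}.

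For $(2)\Rightarrow(1)$ the paper filters by the \emph{characteristic} subgroup $Q\leq P$ of elements of order dividing $p$: any endomorphism automatically preserves $Q[\Gamma]$, the induced map on $Q[\Gamma]\cong\mathbb{F}_p[\Gamma]^D$ is shown to be block upper-triangular in $B_{\mathbf{d}}(\mathbb{F}_p[\Gamma])$ (because $p^{i-1}a_{i,j}$ is a $p^{i-1}$th power, so its image lies in $V_i$), and Lemmata \ref{lem:BlockStrongLeftUnit} and \ref{BlockDiagEquivLem} finish. The induction is on $\exp(P)$, via $P/Q$. You instead peel off the top layer $(\mathbb{Z}/p^m\mathbb{Z})^{d_m}$, which is \emph{not} preserved by $\varphi$; you compensate by observing that the off-diagonal blocks $\varphi_{mj}$ with $j<m$ vanish modulo $p$, deduce invertibility of $\varphi_{mm}$, and then perform explicit row/column operations (legitimate automorphisms of $B$, since the elementary transvections are unipotent) to split it off. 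This is correct and arguably more elementary, avoiding the block-triangular ring formalism entirely; the paper's route is more canonical in that it uses only characteristic filtrations and never needs to change basis.

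For $(1)\Rightarrow(2)$ your argument is again simpler than the paper's. The paper, working through Proposition \ref{BlockDiagStabFinThm}, constructs a non-Hopfian endomorphism of $R\wr\Gamma$ (for a direct factor $R\leq P$) via a somewhat elaborate Hensel lift of a one-sided unit in $B_{\mathbf{d}_i}(\mathbb{F}_p[\Gamma])$. You instead pick a single index $k$ with $d_k=\max_i d_i$, lift a one-sided unit from $\mathbb{M}_{d_k}(\mathbb{F}_p[\Gamma])$ to $\mathbb{M}_{d_k}(\mathbb{Z}/p^k\mathbb{Z}[\Gamma])$ (right-invertibility lifts because $I+pZ$ is a unit; non-invertibility descends), and extend by the identity on the other summands. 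This works and is more direct; the paper's extra complexity stems from having organised the forward direction around the rings $B_{\mathbf{d}_i}$. One small omission: you should note (as the paper does) that the case of finite $\Gamma$ must be handled separately, since Proposition \ref{prop:basic:morphism} requires $\Gamma$ infinite, and that $(1)\Rightarrow$ ``$\Gamma$ Hopfian'' is Lemma \ref{lem:G:Hopfian}.
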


Appealing to solutions of the stable finiteness conjecture in certain cases, we deduce:

\begin{corollary}[Corollary \ref{cor:solution}]
\label{intro:cor:solution}
    Let $\Gamma$ be a finitely generated group, and let $A$ be a finitely generated abelian group. Suppose that one of the following holds:
    \begin{enumerate}
        \item $A$ is torsion-free;
        \item $\Gamma$ is sofic;
        \item $\Gamma$ is bi-orderable;
        \item $A$ is cyclic and $\Gamma$ has the unique product property.
    \end{enumerate}
    Then $A \wr \Gamma$ is Hopfian if and only if $\Gamma$ is Hopfian.
\end{corollary}

Up to embedding arguments, Theorem \ref{intro:thm:main} shows how Hopficity of such wreath products is equivalent to Kaplansky's stable finiteness conjecture over $\mathbb{F}_p$. To move to all fields of characteristic $p$, we use the following result:

\begin{proposition}[Proposition \ref{prop:LE}]
\label{intro:prop:LE}

Let $\mathbb{F}$ be a field of characteristic $p > 0$. Then $\mathbb{F}$ is locally embeddable into finite fields of characteristic $p$. In particular, $\mathbb{F}$ is locally embeddable into matrix algebras over $\mathbb{F}_p$.
\end{proposition}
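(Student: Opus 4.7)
The plan is to unpack local embeddability directly: for each finite $S \subseteq \mathbb{F}$ I must exhibit a finite field $\mathbb{F}_q$ of characteristic $p$ together with a map $\phi : S \to \mathbb{F}_q$ that is injective and respects the partial addition/multiplication table inherited from $\mathbb{F}$. The cleanest way to produce such a $\phi$ is to extend it to a genuine ring homomorphism from a subring of $\mathbb{F}$ containing $S$, because then additivity, multiplicativity, and preservation of $0$ and $1$ are automatic, and injectivity on $S$ reduces to nonvanishing on the finitely many nonzero differences $s_1 - s_2$.

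Concretely, fix a finite $S \subseteq \mathbb{F}$ and let $R := \mathbb{F}_p[S]$ be the $\mathbb{F}_p$-subalgebra of $\mathbb{F}$ generated by $S$. Since $R \subseteq \mathbb{F}$, it is a domain, and by construction it is a finitely generated $\mathbb{F}_p$-algebra. Set $T := \{s_1 - s_2 : s_1, s_2 \in S, \, s_1 \neq s_2\} \subseteq R \setminus \{0\}$ and form the localization $R[T^{-1}]$. This is again a finitely generated $\mathbb{F}_p$-algebra, and it is nonzero because $R$ is a domain and $0 \notin T$. Pick any maximal ideal $\mathfrak{n} \subseteq R[T^{-1}]$; the Nullstellensatz (Zariski's lemma) applied to the finitely generated $\mathbb{F}_p$-algebra $R[T^{-1}]$ shows that $\mathbb{F}_q := R[T^{-1}]/\mathfrak{n}$ is a finite extension of $\mathbb{F}_p$, i.e., a finite field of characteristic $p$. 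The composition $\phi : R \hookrightarrow R[T^{-1}] \twoheadrightarrow \mathbb{F}_q$ is a ring homomorphism sending every $t \in T$ to a unit, hence to a nonzero element. Thus for $s_1 \neq s_2$ in $S$ we get $\phi(s_1) - \phi(s_2) = \phi(s_1 - s_2) \neq 0$, so $\phi|_S$ is injective. This proves local embeddability of $\mathbb{F}$ into finite fields of characteristic $p$.

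For the \textit{in particular} clause, it remains to embed each such $\mathbb{F}_q = \mathbb{F}_{p^n}$ into a matrix algebra over $\mathbb{F}_p$. The left regular representation of $\mathbb{F}_q$ on itself, viewed as an $n$-dimensional $\mathbb{F}_p$-vector space, yields an injective $\mathbb{F}_p$-algebra homomorphism $\mathbb{F}_q \hookrightarrow \mathbb{M}_n(\mathbb{F}_p)$, and postcomposing $\phi$ with this map produces the desired partial embedding of $S$ into $\mathbb{M}_n(\mathbb{F}_p)$.

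The only delicate step, and the one I would flag as the main conceptual point, is the inversion of the finite set $T$ of nonzero differences: without it, an arbitrary maximal ideal of $R$ could identify distinct elements of $S$, and passage to the finite residue field would fail to be injective on $S$. The localization $R[T^{-1}]$ is precisely the device that guarantees any of its maximal ideals (one of which exists by Zorn) avoids the differences we care about. Apart from this observation, the argument is essentially the Nullstellensatz repackaged for local embeddability, and I do not anticipate any further obstacle.
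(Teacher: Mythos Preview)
Your proof is correct, but it follows a genuinely different route from the paper's. You invoke Zariski's lemma (the weak Nullstellensatz) to produce, in one stroke, a ring homomorphism from the finitely generated $\mathbb{F}_p$-algebra $R[T^{-1}]$ onto a finite field, with injectivity on $S$ guaranteed by the localization at the nonzero differences. The paper instead builds the local embedding by hand: it reduces to embedding into $\overline{\mathbb{F}_p}$, writes the finitely generated subfield $\mathbb{F}_p(\alpha_1,\ldots,\alpha_n)$ as an iterated simple extension, and proves separate lemmata handling the algebraic case (lifting the minimal polynomial) and the transcendental case (evaluation at a generic point of $\overline{\mathbb{F}_p}$), then inducts on $n$.

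Your argument is shorter and more conceptual; the paper's is more elementary in that it avoids appealing to the Nullstellensatz as a black box, and is closer in spirit to explicitly exhibiting the local embedding. The paper in fact acknowledges (in the Introduction) that a Nullstellensatz argument of this type was pointed out to the authors after the fact, so your approach is a known alternative. One minor remark: to keep $R[T^{-1}]$ finitely generated over $\mathbb{F}_p$ it suffices to adjoin the single inverse of $\prod_{t\in T} t$, which you may wish to make explicit.
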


This means that for every finite subset $K \subset \mathbb{F}$ there exists an integer $d \geq 1$ and a map $f \colon \mathbb{F} \to \mathbb{M}_d(\mathbb{F}_p)$ whose restriction to $K$ behaves like a ring homomorphism (see Definition \ref{defi:LE}).
The conclusion of Proposition \ref{intro:prop:LE} is well-known to experts but,
having been unable to locate in the literature of the precise formulation
we need, we include a self-contained proof below.
We apply this to our setting to obtain:

\begin{corollary}[Corollary \ref{cor:kaplansky:Fp}]
\label{intro:cor:kaplansky:Fp}

Let $p$ be a prime. Kaplansky's stable finiteness conjecture over some field of characteristic $p$ implies Kaplansky's stable finiteness conjecture over all fields of characteristic $p$.
\end{corollary}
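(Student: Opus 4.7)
The plan is to combine Proposition \ref{intro:prop:LE} with two standard ingredients: direct finiteness passes to unital subrings, and there is a canonical ring isomorphism $\mathbb{M}_N(R)[\Gamma] \cong \mathbb{M}_N(R[\Gamma])$. The first lets me reduce the hypothesis to the prime field $\mathbb{F}_p$, since any field $\mathbb{K}$ of characteristic $p$ contains $\mathbb{F}_p$ and hence induces a unital inclusion $\mathbb{M}_d(\mathbb{F}_p[\Gamma]) \hookrightarrow \mathbb{M}_d(\mathbb{K}[\Gamma])$ that pulls direct finiteness down. The second lets me absorb a coefficient extension into matrix-size growth, so that a failure of direct finiteness over an arbitrary $\mathbb{F}$ of characteristic $p$ can be pushed down to a failure over $\mathbb{F}_p$ at a larger matrix size.

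Concretely, having reduced to the case where $\mathbb{F}_p[\Gamma]$ is stably finite for every $\Gamma$, I fix a field $\mathbb{F}$ of characteristic $p$, a group $\Gamma$, an integer $d \geq 1$, and $X, Y \in \mathbb{M}_d(\mathbb{F}[\Gamma])$ with $XY = I_d$; the goal is to show $YX = I_d$. Let $K \subset \mathbb{F}$ be the finite set obtained by collecting $0$, $1$, every coefficient appearing in the entries of $X$, $Y$, and $YX$, together with all partial sums and products in $\mathbb{F}$ that arise when expanding the matrix products $XY$ and $YX$ by the usual formula. By Proposition \ref{intro:prop:LE}, there exists an integer $N \geq 1$ and an injection $f : K \to \mathbb{M}_N(\mathbb{F}_p)$ that preserves all the recorded sums and products on $K$ and satisfies $f(0) = 0$, $f(1) = I_N$.

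Applying $f$ coefficientwise produces elements $\tilde{f}(X), \tilde{f}(Y) \in \mathbb{M}_d(\mathbb{M}_N(\mathbb{F}_p)[\Gamma]) \cong \mathbb{M}_{dN}(\mathbb{F}_p[\Gamma])$, and the partial homomorphism property of $f$ on $K$ yields $\tilde{f}(X)\tilde{f}(Y) = \tilde{f}(XY) = I_{dN}$ together with $\tilde{f}(Y)\tilde{f}(X) = \tilde{f}(YX)$. Stable finiteness of $\mathbb{F}_p[\Gamma]$, applied at matrix size $dN$, forces $\tilde{f}(Y)\tilde{f}(X) = I_{dN}$, hence $\tilde{f}(YX) = \tilde{f}(I_d)$. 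Since every coefficient of $YX$ already lies in $K$ and $f|_K$ is injective with $f(0) = 0$ and $f(1) = I_N$, comparing coefficients entry by entry then forces $YX = I_d$, as desired.

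The only point requiring care (rather than a substantive obstacle) is verifying that $K$ is rich enough for the partial homomorphism property to implement both $\tilde{f}(XY) = \tilde{f}(X)\tilde{f}(Y)$ and the final implication $\tilde{f}(YX) = \tilde{f}(I_d) \Rightarrow YX = I_d$. Since $X$ and $Y$ have finite support in $\Gamma$ and involve only finitely many nonzero coefficients in $\mathbb{F}$, and matrix multiplication uses only finitely many sum-and-product operations on those coefficients, collecting all intermediate values into $K$ is routine finite bookkeeping. The entire substantive content of the corollary is carried by Proposition \ref{intro:prop:LE}.
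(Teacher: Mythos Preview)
Your argument is correct. The route differs from the paper's in one structural respect worth noting. The paper first invokes Theorem \ref{thm:df:equivalence} (the direct/stable finiteness equivalence) to replace a failure of stable finiteness of $\mathbb{F}[\Gamma]$ by a failure of \emph{direct} finiteness of $\mathbb{F}[\Gamma\times H]$ for some finite $H$, applies the local embedding at the level of group ring elements (so with $d=1$), and then applies Theorem \ref{thm:df:equivalence} twice more to return from $\mathbb{F}_p[\Gamma\times H\times K]$ to $\mathbb{F}_p[\Gamma]$. You instead stay at arbitrary matrix size $d$ throughout and absorb the local embedding into a growth of matrix size $d\mapsto dN$ via $\mathbb{M}_d(\mathbb{M}_N(\mathbb{F}_p)[\Gamma])\cong\mathbb{M}_{dN}(\mathbb{F}_p[\Gamma])$. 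This has the advantage of being entirely self-contained and never leaving the fixed group $\Gamma$, at the cost of slightly heavier bookkeeping when enlarging $K$ to contain all intermediate sums and products from the double (matrix and convolution) multiplication; the paper's detour through Theorem \ref{thm:df:equivalence} keeps that step to a single convolution product. Both approaches rest entirely on Proposition \ref{intro:prop:LE}.
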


After reading a preliminary version of this paper, Giles Gardam pointed out to us that a reduction to finite fields can be achieved for all of the Kaplansky Conjectures via a Nullstellensatz argument: See \cite[Section 3.4.3]{malman} for a proof in the case of the zero-divisor conjecture.
That said, a peculiarity of the stable finiteness conjecture is that it allows to move between matrix algebras of different degrees. This makes it possible to reduce to $\mathbb{F}_p$, rather than to the class of all finite fields. We do not know if such a strong reduction holds for the zero-divisor and idempotent conjectures.

\medskip

Corollary \ref{intro:cor:kaplansky:Fp} gives a new, elementary proof of the following fact:

\begin{corollary}[Corollary \ref{cor:surjunctive}]
\label{intro:cor:surjunctive}

Surjunctive groups satisfy Kaplansky's stable finiteness conjecture.
\end{corollary}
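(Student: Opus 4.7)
The plan is to combine Corollary \ref{intro:cor:kaplansky:Fp} with the classical cellular-automaton interpretation of additive dynamics on $\Gamma$. Since Kaplansky's stable finiteness conjecture is already known over fields of characteristic zero, I would only need to treat fields $\mathbb{F}$ of positive characteristic $p$. By Corollary \ref{intro:cor:kaplansky:Fp}, it is enough to verify stable finiteness over a single field of each characteristic $p$, and the most convenient choice is $\mathbb{F}_p$ itself. Thus the task reduces to showing that $\mathbb{M}_d(\mathbb{F}_p[\Gamma])$ is directly finite for every $d \geq 1$ and every surjunctive $\Gamma$.

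For this, I would deploy the standard dictionary between matrices over $\mathbb{F}_p[\Gamma]$ and additive cellular automata. The ring $\mathbb{M}_d(\mathbb{F}_p[\Gamma])$ acts faithfully on the configuration space $((\mathbb{F}_p)^d)^\Gamma$ by continuous $\Gamma$-equivariant additive maps; by the Curtis--Hedlund--Lyndon theorem each $M \in \mathbb{M}_d(\mathbb{F}_p[\Gamma])$ corresponds to a cellular automaton $\tilde{M}$ on the finite alphabet $(\mathbb{F}_p)^d$, and ring multiplication translates into composition. Given $MN = I$, the identity $\tilde{M} \circ \tilde{N} = \mathrm{id}$ forces $\tilde{N}$ to be injective. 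Surjunctivity of $\Gamma$ then upgrades $\tilde{N}$ to a bijection, so $\tilde{M} = \tilde{N}^{-1}$ and $\tilde{N} \circ \tilde{M} = \mathrm{id}$; faithfulness of the action yields $NM = I$, as required.

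The only delicate point is the dictionary itself — matching the left/right $\Gamma$-actions on $((\mathbb{F}_p)^d)^\Gamma$, fixing the order of matrix multiplication, and verifying faithfulness on delta configurations — but this is routine bookkeeping and does not require new ideas. The substantive payoff is that Corollary \ref{intro:cor:kaplansky:Fp} eliminates the need to invoke the Dykema--Juschenko equivalence between direct and stable finiteness of group rings, or to repeat the cellular-automaton argument field by field; the reduction to $\mathbb{F}_p$ folds both passages into one, which is what makes the proof genuinely elementary.
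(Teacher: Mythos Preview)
Your argument is correct, but it is not the route the paper takes for its main proof of Corollary \ref{cor:surjunctive}. The paper argues: surjunctive $\Rightarrow$ $A$-surjunctive (trivially) $\Rightarrow$ $L$-surjunctive (Theorem \ref{thm:A:surj}) $\Rightarrow$ stably finite (Theorem \ref{thm:csc}, quoted from Ceccherini-Silberstein--Coornaert). Your approach is instead the direct one the paper sketches in the Remark immediately following the corollary: reduce to $\mathbb{F}_p$ via Corollary \ref{cor:kaplansky:Fp}, then run the cellular-automaton argument on the finite alphabet $\mathbb{F}_p^d$ to get direct finiteness of $\mathbb{M}_d(\mathbb{F}_p[\Gamma])$. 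The paper's route proves more along the way (the equivalence of $A$- and $L$-surjunctivity), whereas yours is self-contained and avoids invoking Theorem \ref{thm:csc} as a black box, which is precisely the ``elementary'' aspect the paper advertises. One small caveat: your closing claim that Corollary \ref{intro:cor:kaplansky:Fp} lets you bypass the Dykema--Juschenko equivalence is slightly off, since the paper's proof of that corollary as written does invoke Theorem \ref{thm:df:equivalence}; that said, for the direction you need (stable finiteness over $\mathbb{F}_p$ implies stable finiteness over all $\mathbb{F}$ of characteristic $p$) one can indeed argue directly with $\mathbb{M}_d(\mathbb{F}[\Gamma])$ and the local embedding $\mathbb{F} \hookrightarrow \mathbb{M}_{d'}(\mathbb{F}_p)$, landing in $\mathbb{M}_{dd'}(\mathbb{F}_p[\Gamma])$, so the equivalence is not essential.
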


Indeed, it was already noticed in \cite{df:elekszabo} that surjunctive groups satisfy Kaplansky's stable finiteness conjecture over finite fields, which follows easily from the definition. The statement for surjunctive groups was recently proven by Phung \cite{df:phung} via operator algebras (see \cite{df:phung:2} for an alternative proof via model theory). Via the Gromov--Weiss Theorem \cite{gromov, weiss}, we also obtain a new proof of the following classical result:

\begin{corollary}[Corollary \ref{cor:surjunctive}]
\label{intro:cor:sofic}

Sofic groups satisfy Kaplansky's stable finiteness conjecture.
\end{corollary}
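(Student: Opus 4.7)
The plan is to deduce this corollary immediately from Corollary \ref{intro:cor:surjunctive} together with the classical Gromov--Weiss theorem, which asserts that every sofic group is surjunctive. Since all the substantive content has been established in the prior corollary, the only task is to chain these two facts together.

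More precisely, I would begin by recalling the Gromov--Weiss theorem in its standard formulation: every sofic group satisfies Gottschalk's surjunctivity conjecture, i.e.\ for any such group $\Gamma$, any injective cellular automaton $\Gamma \curvearrowright B^{\Gamma}$ with finite alphabet $B$ is surjective. Given a sofic group $\Gamma$, this exactly says $\Gamma$ is surjunctive, so Corollary \ref{intro:cor:surjunctive} applies and yields stable finiteness of $\mathbb{F}[\Gamma]$ for every field $\mathbb{F}$.

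The main content is really in Corollary \ref{intro:cor:surjunctive}, whose derivation is itself essentially a two-line deduction from, on the one hand, the classical observation of Elek--Szab\'o that surjunctive groups satisfy Kaplansky's stable finiteness conjecture over finite fields (which follows essentially by unfolding the definitions of surjunctivity and of direct finiteness in $\mathbb{M}_d(\mathbb{F}_q[\Gamma])$), and on the other, Corollary \ref{intro:cor:kaplansky:Fp}, which via the local embedding Proposition \ref{intro:prop:LE} reduces the conjecture over an arbitrary field of positive characteristic $p$ to the case of $\mathbb{F}_p$. Characteristic zero is handled by Kaplansky's classical theorem.

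There is no real obstacle here: all the work lies upstream. The novelty of this particular derivation, compared with Elek--Szab\'o's original argument for sofic groups and Phung's extension to surjunctive groups via operator algebras, is that it avoids both sofic approximation techniques and operator-algebraic tools, replacing them by the elementary local embedding statement Proposition \ref{intro:prop:LE} together with the finite-field observation that was already implicit in \cite{df:elekszabo}.
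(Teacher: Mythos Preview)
Your argument is correct and, for the sofic statement itself, identical to the paper's: apply Gromov--Weiss to land in the surjunctive case, then invoke Corollary~\ref{intro:cor:surjunctive}.

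The one point worth noting is your gloss on \emph{why} Corollary~\ref{intro:cor:surjunctive} holds. The paper's official proof of that result does not go via the Elek--Szab\'o finite-field observation combined with Corollary~\ref{intro:cor:kaplansky:Fp}; instead it routes through the notions of $A$-surjunctivity and $L$-surjunctivity: surjunctive groups are trivially $A$-surjunctive, Theorem~\ref{intro:thm:A:surj} upgrades this to $L$-surjunctivity, and then the Ceccherini-Silberstein--Coornaert equivalence (Theorem~\ref{thm:csc}) yields stable finiteness. The route you describe is precisely the alternative one the paper records in the Remark immediately following Corollary~\ref{cor:surjunctive}, and is arguably the more elementary of the two since it avoids the machinery of linear cellular automata. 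Both are valid; your sketch simply chooses the shortcut the paper relegates to a remark.
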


This statement has now been proven several times: Via operator algebras \cite{df:elekszabo}, via linear cellular automata \cite{df:ceccherinicoornaert} and via metric approximations of groups \cite{linearsofic}; Corollary \ref{intro:cor:sofic} adds a commutative algebra approach to this list of proofs.

\medskip

A further application of Proposition \ref{intro:prop:LE} is to provide yet another equivalence with the stable finiteness conjecture:

\begin{theorem}[Theorem \ref{thm:A:surj}]
\label{intro:thm:A:surj}
Let $\Gamma$ be a group. Then Kaplansky's stable finiteness conjecture holds over $\Gamma$ if and only if $\Gamma$ is $A$-surjunctive.
\end{theorem}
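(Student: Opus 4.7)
The plan is to use Pontryagin duality to convert additive cellular automata into $\mathbb{Z}[\Gamma]$-module endomorphisms of group-ring modules, then exploit the dictionary in both directions.

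For the setup, recall that if $A$ is a finite abelian group then the compact group $A^\Gamma$ has discrete Pontryagin dual $\bigoplus_\Gamma \widehat{A}$, which becomes a right $\mathbb{Z}[\Gamma]$-module via the shift action. Continuous shift-equivariant additive self-maps of $A^\Gamma$, i.e.\ additive cellular automata, correspond bijectively to $\mathbb{Z}[\Gamma]$-module endomorphisms of this dual, and injectivity on one side dualizes to surjectivity on the other (using that $\tau(A^\Gamma)$ is automatically closed by compactness).

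For the easy direction, \emph{$A$-surjunctive implies stable finiteness}: since Kaplansky's theorem makes SF automatic in characteristic zero, Corollary \ref{intro:cor:kaplansky:Fp} reduces us to verifying SF over $\mathbb{F}_p$ for each prime $p$. Given $MN = I$ in $\mathbb{M}_d(\mathbb{F}_p[\Gamma])$, take $A = \mathbb{F}_p^d$, so $\bigoplus_\Gamma \widehat{A} \cong \mathbb{F}_p[\Gamma]^d$; left multiplication by $M$ is then a surjective endomorphism of this free module, so its Pontryagin dual is an injective additive CA on $(\mathbb{F}_p^d)^\Gamma$. By $A$-surjunctivity it is also surjective, so $M$ is injective as well and hence a unit in $\mathbb{M}_d(\mathbb{F}_p[\Gamma])$, giving $NM = I$.

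For the hard direction, \emph{stable finiteness implies $A$-surjunctive}: let $\tau$ be an injective additive CA on $A^\Gamma$; since the primary decomposition $A = \bigoplus_p A_p$ is preserved, we may assume $A$ is a finite abelian $p$-group of exponent $p^k$. Then $\widehat{\tau}$ is a surjective endomorphism of the $\mathbb{Z}/p^k\mathbb{Z}[\Gamma]$-module $\bigoplus_\Gamma \widehat{A}$, and we want it to be injective. A short lifting lemma promotes SF across nilpotent ideals: if $J \subset R$ is two-sided nilpotent and $R/J$ is directly finite, then $ab = 1$ forces $(ba)^2 = b(ab)a = ba$, so $ba = 1 + y$ with $y \in J$ and $y + y^2 = 0$, hence $y = y^3 = y^5 = \cdots$, and nilpotency forces $y = 0$. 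Taking $J = (p) \subset \mathbb{Z}/p^k\mathbb{Z}[\Gamma]$ promotes SF over $\mathbb{F}_p[\Gamma]$ to SF over $\mathbb{Z}/p^k\mathbb{Z}[\Gamma]$. When $\widehat{A} = (\mathbb{Z}/p^k\mathbb{Z})^e$ the module is free of rank $e$, so $\widehat{\tau}$ is given by a matrix with right inverse in $\mathbb{M}_e(\mathbb{Z}/p^k\mathbb{Z}[\Gamma])$, which is a unit by SF, hence also injective. For a general $\widehat{A} = \bigoplus_i (\mathbb{Z}/p^i\mathbb{Z})^{d_i}$ the module is no longer free, and I would analyze $\widehat{\tau}$ block-by-block along the homogeneous components of $\widehat{A}$, mirroring the proof of Theorem \ref{intro:thm:main} to reduce to direct finiteness of $\mathbb{M}_{\max_i d_i}(\mathbb{F}_p[\Gamma])$. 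This non-free case is the one substantive obstacle in the plan; the Pontryagin dictionary and the nilpotent lifting lemma are routine.
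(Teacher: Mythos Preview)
Your Pontryagin-duality framework is sound and your easy direction is essentially the same as the paper's (both reduce to $\mathbb{F}_p$ via Corollary~\ref{intro:cor:kaplansky:Fp} and invoke the vector-space case). The nilpotent lifting lemma is correct as stated. However, you yourself identify the genuine gap: the hard direction is incomplete in the non-homocyclic case, where $\bigoplus_\Gamma \widehat{A}$ is not a free $\mathbb{Z}/p^k\mathbb{Z}[\Gamma]$-module and stable finiteness of the ring does not immediately give Hopficity of the module. Appealing to the block-matrix machinery of Theorem~\ref{intro:thm:main} would work, but that machinery is substantially heavier than what is needed here.

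The paper's proof bypasses this obstacle entirely with a much lighter argument, working directly on the CA side rather than dualizing. After reducing to $A$ a $p$-group, it inducts on the exponent of $A$: let $Q \leq A$ be the subgroup of elements of order dividing $p$, so $Q$ is an $\mathbb{F}_p$-vector space. Any additive CA $f$ on $A^\Gamma$ preserves $Q^\Gamma$ (order is preserved), and the restriction $f|_{Q^\Gamma}$ is automatically \emph{linear}. By $L$-surjunctivity (equivalently, stable finiteness over $\mathbb{F}_p$ via the Ceccherini-Silberstein--Coornaert theorem), $f|_{Q^\Gamma}$ is surjective; combined with injectivity of $f$ this gives $f^{-1}(Q^\Gamma) = Q^\Gamma$, so $f$ descends to an injective additive CA on $(A/Q)^\Gamma$, and the exponent has dropped. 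The point is that at each stage the relevant alphabet $Q$ \emph{is} a vector space, so the free-module hypothesis is never violated and no block-triangular analysis is needed. Your dual picture could be repaired by the analogous filtration $p^{k-1}M \subset \cdots \subset pM \subset M$ on the module side, but the direct argument is shorter.
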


Here a group is said to be \emph{$A$-surjunctive} if every injective additive cellular automaton over $\Gamma$ is surjective (Definition \ref{defi:surjuctive}). The analogous statement for linear cellular automata, which yields the notion of \emph{$L$-surjunctive} groups, was proven in \cite{df:ceccherinicoornaert}.

\medskip

We are also able to treat some non-abelian base groups. On the one hand, we treat nilpotent groups, where an induction argument on the nilpotency class allows to extend our results in the abelian case:

\begin{theorem}[Theorem \ref{thm:nilpotentbasis} and Corollary \ref{cor:nilpotentbasis}]
\label{intro:thm:nilpotentbasis}

Let $\Delta$ be a finitely generated nilpotent group, with upper central series 
$\{ Z_i \}_{i = 0}^c$. Let $\Gamma$ be a finitely generated group and suppose that $(Z_i / Z_{i-1}) \wr \Gamma$ is Hopfian for all $i = 1, \ldots, c$. Then $\Delta \wr \Gamma$ is Hopfian.

In particular, Kaplansky's stable finiteness conjecture holds if and only if $\Delta \wr \Gamma$ is Hopfian for every finitely generated nilpotent group $\Delta$ and every finitely generated Hopfian group $\Gamma$.
\end{theorem}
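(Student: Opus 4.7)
I shall proceed by induction on the nilpotency class $c$ of $\Delta$. The base case $c = 1$ is precisely the hypothesis for $i = 1$. For $c \ge 2$, set $Z := Z_1 = Z(\Delta)$ and $Q := \Delta / Z$. Then $Q$ is nilpotent of class $c - 1$, with upper central series $Z_i(Q) = Z_{i+1}(\Delta)/Z$ and consecutive quotients $Z_i(Q) / Z_{i-1}(Q) \cong Z_{i+1}(\Delta) / Z_i(\Delta)$ for $1 \le i \le c - 1$. Thus the hypothesis on $\Delta$ transfers to $Q$, and by induction $Q \wr \Gamma$ is Hopfian. Writing $B := \bigoplus_\Gamma \Delta$ and $N := \bigoplus_\Gamma Z$, we have the short exact sequence
\[
1 \longrightarrow N \longrightarrow \Delta \wr \Gamma \longrightarrow Q \wr \Gamma \longrightarrow 1
\]
together with the key identity $N = Z(B)$.

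Now let $\phi \colon \Delta \wr \Gamma \to \Delta \wr \Gamma$ be a surjective endomorphism. The decisive step is to prove $\phi(B) = B$, whence $\phi(N) \subseteq N$ since a surjective self-homomorphism of $B$ carries $Z(B)$ into $Z(B)$. The strategy is to identify $B$ as the maximal nilpotent normal subgroup (Fitting/Hirsch--Plotkin radical) of $\Delta \wr \Gamma$: any normal subgroup strictly containing $B$ includes some element $g$ with non-trivial image in $\Gamma$, and because $\Gamma$ acts on $B$ by shift, iterated commutators of $g$ with base elements propagate their supports over an infinite $\Gamma$-orbit, so the lower central series cannot stabilise. (When $\Gamma$ is finite, $\Delta \wr \Gamma$ is virtually finitely generated nilpotent, hence residually finite, and Hopficity follows at once from Mal'cev's theorem.) Since surjective endomorphisms send nilpotent normal subgroups to nilpotent normal subgroups, maximality gives $\phi(B) \subseteq B$; the induced surjection on $\Gamma = (\Delta \wr \Gamma)/B$ is then forced to be injective by Hopficity of $\Gamma$ (itself a consequence of our hypothesis via Proposition \ref{intro:prop:G:Hopfian}), yielding $\phi(B) = B$.

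With $\phi(N) \subseteq N$ secured, $\phi$ descends to a surjective endomorphism $\bar\phi \colon Q \wr \Gamma \to Q \wr \Gamma$, which is an isomorphism by Hopficity of $Q \wr \Gamma$. Consequently $\ker \phi \subseteq N$, and a diagram chase shows that $\phi|_N \colon N \to N$ is surjective. Applying the same Fitting-subgroup argument to $Q \wr \Gamma$, $\bar\phi$ preserves $\bigoplus_\Gamma Q$ and so induces an automorphism $\alpha$ of $\Gamma$. Using the centrality of $N$ in $B$, one checks the twisted equivariance $\phi(\gamma \cdot n) = \alpha(\gamma) \cdot \phi(n)$ for $n \in N$ and $\gamma \in \Gamma$: writing $\phi(\gamma) = b \cdot \alpha(\gamma)$ with $b \in B$, the element $\alpha(\gamma) \phi(n) \alpha(\gamma)^{-1}$ lies in $N = Z(B)$ and so is fixed by conjugation by $b$. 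This data assembles into a surjective endomorphism $\tilde\phi \colon Z \wr \Gamma \to Z \wr \Gamma$, $(n, \gamma) \mapsto (\phi(n), \alpha(\gamma))$; since $Z \wr \Gamma$ is Hopfian by the hypothesis for $i = 1$, $\tilde\phi$ is injective, and hence so is $\phi|_N$. Therefore $\ker \phi = 1$ as required. The "in particular" statement follows by specialising to abelian $\Delta$ for the reverse implication, and combining the first part of the theorem with Theorem \ref{intro:thm:main:general} applied to each $(Z_i/Z_{i-1}) \wr \Gamma$ for the forward implication.

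The main obstacle, I expect, is the identification of $B$ as the Fitting subgroup of $\Delta \wr \Gamma$: the commutator argument sketched above is clean when $\Gamma$ is torsion-free, but torsion in $\Gamma$ combined with finite quotients in the upper central series of $\Delta$ could complicate the analysis, and one may instead need an intrinsic characterisation of $B$ that exploits the nilpotency-class filtration of $\Delta$ more delicately.
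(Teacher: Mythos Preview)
Your induction scheme and the endgame (constructing $\tilde\phi : Z \wr \Gamma \to Z \wr \Gamma$ from $\phi|_N$ and $\alpha$) match the paper's Proposition \ref{prop:centre:hopfian} closely. The genuine gap is exactly the one you flag: the identification of $B=\Delta[\Gamma]$ as the Fitting subgroup of $\Delta\wr\Gamma$ is false in general, so the step ``$\phi(B)\subseteq B$'' does not go through. For a concrete counterexample take $\Delta$ any non-abelian finite $p$-group (say $Q_8$, nilpotent of class $2$) and $\Gamma$ infinite with a central element $\gamma$ of order $p$ (say $\Gamma=\mathbb{Z}\times\mathbb{Z}/p\mathbb{Z}$). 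Then $B\langle\gamma\rangle$ is normal, strictly contains $B$, and is nilpotent: the $\langle\gamma\rangle$-orbits on the index set $\Gamma$ have size $\leq p$, so the lower central series of $B\langle\gamma\rangle$ is controlled orbit-by-orbit by that of the finite $p$-group $\Delta\wr C_p$. Thus the Fitting subgroup is strictly larger than $B$, and your mechanism for forcing $\phi(B)\subseteq B$ (and, by the same token, for forcing $\bar\phi$ to preserve $\bigoplus_\Gamma Q$) breaks.

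The paper avoids this entirely by \emph{not} attempting to prove $\phi(B)\subseteq B$ up front. Instead, Lemma \ref{lem:centre:basic} shows directly that $\phi(Z[\Gamma])\leq Z[\Gamma]$ even when $\phi$ is not basic: one first checks that the basic part $\phi(\Delta[\Gamma])\cap\Delta[\Gamma]$ already projects onto $\Delta_{(e)}$ (via the commutator trick $[(f,\gamma),\delta_{(e)}]$ when $\phi$ is non-basic, or via Lemma \ref{lem:epi:semi} when it is), and then observes that the image of the abelian normal subgroup $Z[\Gamma]$ must centralise this full projection, hence land in $Z[\Gamma]$. Crucially, the paper also strengthens the induction hypothesis to ``$(\Delta/Z)\wr\Gamma$ is Hopfian \emph{and every automorphism of it is basic}'', the base case being Proposition \ref{prop:morphismsarebasic}. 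This extra clause is what lets you extract the automorphism $\alpha$ of $\Gamma$ from $\bar\phi$ without any Fitting-type characterisation of the base. Your endgame then goes through unchanged.
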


On the opposite end of the spectrum, we examine the case in which $\Delta$ has certain properties that are incompatible with properties of $\Gamma$. In this case, Hopficity is easier to show directly, and leads to many examples of the following phenomenon. 

\begin{theorem}[Theorem \ref{thm:nonhopfiantop}]
\label{intro:thm:nonhopfiantop}

There exist finitely generated groups $\Delta, \Gamma$ such that $\Gamma$ is non-Hopfian but $\Delta \wr \Gamma$ is Hopfian.
\end{theorem}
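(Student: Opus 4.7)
By Proposition \ref{intro:prop:G:Hopfian}, any working example must have $\Delta$ non-abelian, since otherwise Hopficity of $\Delta \wr \Gamma$ would force Hopficity of $\Gamma$. The plan is to take $\Delta$ to be a non-abelian finite simple group (concretely $\Delta = A_5$) and $\Gamma$ to be a finitely generated non-Hopfian group that has \emph{no} quotient isomorphic to $\Delta$. A natural candidate is $\Gamma = \BS(2,3)$; we verify the quotient condition by a short case analysis on the order of the image of $a$ under a hypothetical map $\BS(2,3) \to A_5$: elements of order $1$, $2$, $3$ are immediately ruled out by the relation $t a^2 t^{-1} = a^3$, and an image of order $5$ forces the pair $(\alpha, \tau)$ to lie in the normaliser of a Sylow $5$-subgroup of $A_5$, which is dihedral of order $10$ and hence proper.

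Given this choice, let $\phi : \Delta \wr \Gamma \to \Delta \wr \Gamma$ be any surjection with kernel $K$, and let $N = \bigoplus_\Gamma \Delta$ be the base subgroup. The heart of the argument is the observation that $K \cap N$ is a $\Gamma$-invariant normal subgroup of $N$. A standard commutator calculation in direct sums of non-abelian simple groups shows that every normal subgroup of $N$ is a subproduct $\bigoplus_{\gamma \in S} \Delta$; since $\Gamma$ acts transitively on itself by shifts, $\Gamma$-invariance forces $S \in \{\emptyset, \Gamma\}$, so $K \cap N \in \{1, N\}$.

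The two cases are then handled separately. If $K \supseteq N$, then $\phi$ factors through $(\Delta \wr \Gamma)/N \cong \Gamma$, exhibiting $\Delta \wr \Gamma$ as a quotient of $\Gamma$; projecting onto a single factor of the base produces a surjection $\Gamma \twoheadrightarrow \Delta$, contradicting the choice of $\Gamma$. If instead $K \cap N = 1$, write an arbitrary $k \in K$ as $k = n\gamma$ with $n \in N$, $\gamma \in \Gamma$, and compute the commutator $[n_0, k] \in K \cap N = 1$ for $n_0 \in N$; unpacking yields the identity $n^{-1} n_0 n = \gamma \cdot n_0$ for every $n_0 \in N$. The left-hand side preserves the grading of $N$ by $\Gamma$, while the right-hand side shifts it by $\gamma$; hence $\gamma = 1$, and then $n$ centralises $N$, which is centreless since $\Delta$ is. So $k = 1$, giving $K = 1$ as required.

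The main obstacle is not the structural half of the argument (which is clean once one sees that the base is characteristically rigid under these hypotheses), but the existence of suitable $\Gamma$: one needs a finitely generated non-Hopfian group with no $A_5$-quotient, and more care is needed to make sure the verification for $\BS(2,3)$ is correct. Once this point is settled, the proof is short and the same template works for any non-abelian simple $\Delta$ that is not a quotient of the chosen non-Hopfian $\Gamma$, yielding many examples of the claimed phenomenon.
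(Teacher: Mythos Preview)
Your Case~1 contains a genuine gap: there is no group homomorphism ``projection onto a single factor'' from $\Delta \wr \Gamma$ to $\Delta$. The coordinate projection $N = \bigoplus_\Gamma \Delta \to \Delta_{(e)}$ is a homomorphism on $N$, but it does not extend to $\Delta \wr \Gamma$, because the $\Gamma$-action permutes the factors. In fact, when $\Gamma$ is infinite and $\Delta$ is finite simple, $\Delta \wr \Gamma$ has \emph{no} quotient isomorphic to $\Delta$: the kernel of any such map would meet $N$ in a $\Gamma$-invariant subproduct, hence in $\{1\}$ or $N$; the first is impossible by cardinality, and the second forces the map to factor through $\Gamma$. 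So the hypothesis ``$\Gamma$ has no $A_5$-quotient'' does not by itself exclude a surjection $\Gamma \twoheadrightarrow \Delta \wr \Gamma$, and your chain $\Gamma \twoheadrightarrow \Delta \wr \Gamma \twoheadrightarrow \Delta$ cannot be formed.

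The fix is immediate once you observe that $\BS(2,3)$ is metabelian (the normal closure of $a$ is abelian), hence solvable, while $\Delta \wr \Gamma$ contains $A_5$ and is therefore not solvable; no surjection $\Gamma \to \Delta \wr \Gamma$ can exist. This also makes your element-order case analysis superfluous. With that correction your argument is complete, and it then becomes a concrete instance of the paper's proof: there one takes any just-non-solvable $\Delta$ and any finitely generated solvable non-Hopfian $\Gamma$ (e.g.\ Abels' group, though your $\BS(2,3)$ works too), shows via the normal-subgroup dichotomy of Proposition~\ref{prop:dich} that $\Delta \wr \Gamma$ is itself just-non-solvable, and notes that just-non-solvable groups are automatically Hopfian. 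Your Case~2 computation is correct and is precisely the ``basic'' branch of that dichotomy, specialised to a simple base.
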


This result shows that the abelian hypothesis in the second part of Proposition \ref{intro:prop:G:Hopfian} cannot be entirely removed. 
The proof is closely reminiscent of that of Gruenberg's criterion for non-residual finiteness 
of wreath products from \cite{grunberg}. 
In spite of Theorem \ref{intro:thm:nonhopfiantop}, 
the motivating Question \ref{q:main} is still open in its generality. 
In particular, we end with the following special case:

\begin{question}\label{q:final}

Do there exist finitely generated Hopfian groups $\Delta, \Gamma$ such that $\Delta \wr \Gamma$ is non-Hopfian?
\end{question}

In light of one direction of Theorem \ref{intro:thm:main:general}, 
such an example could be seen as a ``non-commutative'' negative answer 
to Kaplansky's stable finiteness conjecture. In view of our main results, 
to construct such an example one would expect that $\Delta$ should be far from abelian. So in particular we isolate the following special case of Question \ref{q:final}:

\begin{question}
Let $F$ be a non-abelian free group of finite rank, and let $\Gamma$ be a finitely generated Hopfian group. Is $F \wr \Gamma$ Hopfian?
\end{question}

This is a promising case to treat since, on top of being far from abelian, free groups are also far from satisfying any of the properties that are exploited in the proof of Theorem \ref{intro:thm:nonhopfiantop} (see also Remark \ref{rem:justnonP}).

\medskip

\textbf{Outline.} We start with some group-theoretic results on wreath products and their epimorphisms in Section \ref{s:wreath}. Then we move on to stable finiteness in Section \ref{s:stable}, proving Proposition \ref{intro:prop:LE} and its consequences. We combine the two approaches in Section \ref{s:main}, proving Theorem \ref{intro:thm:main}. Finally, we analyse non-abelian bases in Section \ref{s:nonabelian}, proving Theorems \ref{intro:thm:nilpotentbasis} and \ref{intro:thm:nonhopfiantop}.

\medskip

\textbf{Acknowledgements.} The authors are indebted to Giles Gardam, Anthony Genevois, Peter Kropholler, Markus Steenbock and John Wilson for useful conversations.
They also wish to thank the organisers of the conference \emph{YGGT X - Newcastle (Online)}, where this work was started.

\medskip

\textbf{Notations.} Throughout the rest of this paper, $\Delta$ and $\Gamma$ will always denote discrete groups, and $A$ will always denote an abelian group. We will use $\cdot$ to denote the multiplication in $\Gamma$, and $1_{\Gamma}$ to denote the identity, while for $A$ we use the additive notation $+, 0$. Conjugacy is denoted by ${_\gamma}f \coloneqq \gamma f \gamma^{-1}$, and accordingly commutators are defined as $[\gamma, f] \coloneqq {_\gamma}f \cdot f^{-1} = \gamma f \gamma^{-1} f^{-1}$. The set of natural numbers $\mathbb{N}$ contains $0$.

\section{Wreath products}
\label{s:wreath}

In this section we establish notations and terminology, and prove some group-theoretic facts about wreath products and their self-epimorphisms, before moving to the ring-theoretic approach in Section \ref{s:stable}.

\begin{definition}
The subgroup $\bigoplus_{\Gamma} \Delta \leq \Delta \wr \Gamma$ is called the \emph{base group}, and is denoted $\Delta[\Gamma]$. A subgroup of the base group is called \emph{basic}. A morphism $\varphi \colon \Delta \wr \Gamma \to \Delta \wr \Gamma$ is called \emph{basic} if the image of the base group is basic.
\end{definition}

We will denote elements of the base group as functions $f \colon \Gamma \to \Delta$ of finite support, assigning to each $\gamma \in \Gamma$ the corresponding coordinate. Then the action of $\Gamma$ takes the form ${_\gamma}f(x) = f(\gamma^{-1} x)$, and the group operation is:
$$(f_1, \gamma_1) (f_2, \gamma_2) = (f_1 \cdot {_{\gamma_1}}f_2, \gamma_1 \gamma_2).$$
When specializing to abelian bases, we will use $+$ to denote the operation on the base group, which makes the notation less confusing.

\begin{remark}
    Later on, we will make a connection between the base group $\Delta[\Gamma]$, when $\Delta$ is isomorphic to the additive group of a field $\mathbb{F}$, and the group ring $\mathbb{F}[\Gamma]$. To make the distinction clear, we will denote elements of the group ring not as functions but as formal linear combinations of group elements.
\end{remark}

\begin{notation}
Given $\delta \in \Delta$ and $\gamma \in \Gamma$ we denote by $\delta_{(\gamma)} \in \Delta[\Gamma]$ the element defined by
$$\delta_{(\gamma)}(x) = 
\begin{cases}
\delta \text{ if } x = \gamma; \\
1_{\Delta} \text{ otherwise}.
\end{cases}
$$
We denote by $\Delta_{(\gamma)} \coloneqq \{ \delta_{(\gamma)} \colon \delta \in \Delta \}$, that is the copy of $\Delta$ sitting at the coordinate $\gamma$.
\end{notation}

Notice that $\Delta \wr \Gamma$ is generated by $\Gamma$ and $\Delta_{(\gamma)}$ for any choice of $\gamma \in \Gamma$. In particular, if $\Delta$ and $\Gamma$ are finitely generated, then so is $\Delta \wr \Gamma$. One can also show that the converse is true, however this is less relevant for our purposes.

\subsection{Constructions of morphisms}

Here we prove Proposition \ref{intro:prop:G:Hopfian}; the constructions will also be used later on. First, we show that if $\Delta$ is non-Hopfian, then $\Delta \wr \Gamma$ is non-Hopfian:

\begin{lemma}
\label{lem:D:Hopfian}

Let $\varphi \colon \Delta \to \Delta'$ be an epimorphism. Then
$$\Phi \colon \Delta \wr \Gamma \to \Delta' \wr \Gamma : (f, \gamma) \mapsto (\varphi \circ f, \gamma)$$
is an epimorphism. It is an isomorphism if and only if $\varphi$ is.

In particular, if $\Delta$ is non-Hopfian, then $\Delta \wr \Gamma$ is non-Hopfian.
\end{lemma}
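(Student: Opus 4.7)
The plan is to verify three things in sequence: that $\Phi$ is a well-defined homomorphism, that it is surjective, and that its kernel is controlled by the kernel of $\varphi$. None of the steps should be genuinely difficult; the whole lemma is a routine check that applying $\varphi$ coordinate-wise is compatible with the wreath product structure, and the only point that needs a moment of thought is the interaction between $\varphi$ and the shift action.

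For the homomorphism property, I would first observe that the shift action only permutes coordinates, so applying a group homomorphism pointwise commutes with it:
\[
{_\gamma}(\varphi \circ f)(x) = (\varphi \circ f)(\gamma^{-1} x) = \varphi(f(\gamma^{-1} x)) = \varphi({_\gamma}f(x)).
\]
Using this identity together with the multiplication rule $(f_1, \gamma_1)(f_2, \gamma_2) = (f_1 \cdot {_{\gamma_1}}f_2,\ \gamma_1 \gamma_2)$ and the fact that $\varphi$ is a homomorphism on $\Delta$, the verification that $\Phi$ respects the group operation is a one-line computation.

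For surjectivity, I would take $(g, \gamma) \in \Delta' \wr \Gamma$, note that $g$ has finite support, and for each $x$ in $\operatorname{supp}(g)$ choose a preimage $\delta_x \in \varphi^{-1}(g(x))$, which exists since $\varphi$ is surjective. Defining $f : \Gamma \to \Delta$ by $f(x) = \delta_x$ on the support and $f(x) = 1_\Delta$ elsewhere gives an element of the base group with $\Phi(f, \gamma) = (g, \gamma)$. For the kernel, I would observe that $\Phi(f, \gamma) = (1_{\Delta'[\Gamma]}, 1_\Gamma)$ forces $\gamma = 1_\Gamma$ and $f(x) \in \ker\varphi$ for every $x$, so $\ker \Phi$ is canonically identified with $(\ker \varphi)[\Gamma]$. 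This is trivial precisely when $\ker \varphi$ is trivial, giving the ``isomorphism if and only if'' clause.

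The final assertion is then immediate: if $\Delta$ is non-Hopfian, pick an endomorphism $\varphi : \Delta \to \Delta$ that is surjective but not injective, and the induced map $\Phi : \Delta \wr \Gamma \to \Delta \wr \Gamma$ will share both properties. The only mild subtlety worth flagging in the write-up is the compatibility of $\varphi$ with the shift; everything else is formal.
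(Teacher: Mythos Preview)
Your proposal is correct and complete. The paper itself omits the proof entirely, stating only that it ``is elementary and left to the reader''; your write-up is precisely the routine verification the authors had in mind, and the one point you flag as the only mild subtlety---compatibility of $\varphi$ with the shift---is indeed the only step that is not purely formal.
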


The proof is elementary and left to the reader. The other case is more interesting, and was already noticed by Gruenberg \cite[Lemma 3.2]{grunberg}:

\begin{lemma}
\label{lem:G:Hopfian}

Suppose that $A$ is abelian. Let $\varphi \colon \Gamma \to \Gamma'$ be an epimorphism. Given $f \in A[\Gamma]$ define $\varphi^*(f) \in A[\Gamma']$ by:
$$\varphi^*(f) \colon \Gamma' \to A : x \mapsto \sum\limits_{y \in \varphi^{-1}(x)} f(y).$$
Then
$$\Phi \colon A \wr \Gamma \to A \wr \Gamma' : (f, \gamma) \mapsto (\varphi^*(f), \varphi(\gamma))$$
is an epimorphism. It is an isomorphism if and only if $\varphi$ is.

In particular, if $\Gamma$ is non-Hopfian, then $A \wr \Gamma$ is non-Hopfian.
\end{lemma}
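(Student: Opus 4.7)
My approach is to verify in turn that $\varphi^*$ is a well-defined homomorphism of base groups, that it intertwines the $\Gamma$- and $\Gamma'$-actions in the right way to make $\Phi$ a homomorphism, and then to deduce surjectivity and the isomorphism characterization. The definition of $\varphi^*$ is legitimate because $f$ has finite support (so each fiber sum is finite) and $A$ is abelian (so the unordered sum is unambiguous); this is the only place abelianness enters. That $\varphi^* : A[\Gamma] \to A[\Gamma']$ is a homomorphism of abelian groups is then immediate from comparing coordinates.

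The crux of the argument will be the equivariance identity
\[
\varphi^*({_\gamma}f) = {_{\varphi(\gamma)}}\varphi^*(f)
\]
for all $\gamma \in \Gamma$ and $f \in A[\Gamma]$. To prove it I would evaluate both sides at a point $x \in \Gamma'$: the left hand side becomes $\sum_{y \in \varphi^{-1}(x)} f(\gamma^{-1} y)$, the right hand side becomes $\sum_{z \in \varphi^{-1}(\varphi(\gamma)^{-1} x)} f(z)$, and the bijection $y \mapsto \gamma^{-1} y$ identifies $\varphi^{-1}(x)$ with $\varphi^{-1}(\varphi(\gamma)^{-1} x)$, matching summands. Given this identity, a short check against the wreath product operation $(f_1, \gamma_1)(f_2, \gamma_2) = (f_1 + {_{\gamma_1}}f_2, \gamma_1 \gamma_2)$ shows $\Phi$ is a group homomorphism.

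For surjectivity, given $(g, \gamma') \in A \wr \Gamma'$, I would first lift $\gamma'$ to any $\gamma \in \Gamma$ using surjectivity of $\varphi$, then for each $x$ in the finite support of $g$ pick one preimage $y_x \in \varphi^{-1}(x)$, and set $f(y_x) := g(x)$, extended by $0$, so that $\Phi(f, \gamma) = (g, \gamma')$. The isomorphism characterization then splits into two easy observations: if $\varphi$ is a bijection, every fiber is a singleton, so $\varphi^*$, and hence $\Phi$, is a bijection; conversely, any nontrivial $\gamma \in \ker \varphi$ yields $\Phi(0, \gamma) = (0, 1_{\Gamma'}) = \Phi(0, 1_\Gamma)$, defeating injectivity. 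The non-Hopficity consequence then follows by applying the construction to a self-epimorphism of $\Gamma$ witnessing non-Hopficity. The only genuinely nontrivial step is the equivariance identity, and even that reduces to a one-line bijection between fibers; the abelian hypothesis plays its role only in making the defining sums of $\varphi^*$ coordinate-free.
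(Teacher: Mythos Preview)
Your proposal is correct and follows essentially the same approach as the paper: both establish the equivariance identity $\varphi^*({_\gamma}f) = {_{\varphi(\gamma)}}\varphi^*(f)$ by evaluating at a point and reindexing over fibers, then deduce that $\Phi$ is a homomorphism, and handle surjectivity and the isomorphism characterization by straightforward checks. The only cosmetic differences are that the paper phrases surjectivity via $\Phi(\Delta_{(\gamma)}) = \Delta_{(\varphi(\gamma))}$ rather than your explicit lift, and in the injective case rewrites $\varphi^*(f) = f \circ \varphi^{-1}$ rather than your ``singleton fibers'' observation---but these are the same arguments.
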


\begin{proof}
We start by noticing that $\varphi^* \colon A[\Gamma] \to A[\Gamma']$ is a homomorphism, and moreover
$$\varphi^*({_\gamma}f)(x) = \sum\limits_{y \in \varphi^{-1}(x)} f(\gamma^{-1}y) = \sum\limits_{z \in \varphi^{-1}(\varphi(\gamma^{-1}) x)} f(z) = \varphi^*(f)(\varphi(\gamma^{-1}) x);$$
therefore $\varphi^*({_\gamma}f) = {_{\varphi(\gamma)}}\varphi^*(f)$. This easily imply that $\Phi$ is a homomorphism:
\begin{align*}
\Phi((f_1, \gamma_1)(f_2, \gamma_2)) = \Phi(f_1 + {_{\gamma_1}}f_2, \gamma_1 \gamma_2) &= (\varphi^* (f_1) + {_{\varphi(\gamma_1)}}\varphi^*(f_2), \varphi(\gamma_1) \varphi(\gamma_2))\\
&= (\varphi^*(f_1), \varphi(\gamma_1)) (\varphi^*(f_2), \varphi(\gamma_2)) = \Phi(f_1, \gamma_1) \Phi(f_2, \gamma_2).
\end{align*}

Since $\Phi(\Gamma) = \varphi(\Gamma)$, and $\Phi(\Delta_{(\gamma)}) = \Delta_{(\varphi(\gamma))}$, the surjectivity of $\varphi$ implies the surjectivity of $\Phi$. Since $\Phi|_{\Gamma} = \varphi$, it follows that $\Phi$ can only be injective if $\varphi$ is injective. Conversely, if $\varphi$ is injective, then it is an isomorphism, and so we may rewrite $\Phi(f, \gamma) = (f \circ \varphi^{-1}, \varphi(\gamma))$, which shows that $\Phi$ is injective.
\end{proof}

We will see in Theorem \ref{thm:nonhopfiantop} that the assumption that the base group be abelian is necessary. Indeed, if $\Delta$ is non-abelian, it is possible for $\Delta \wr \Gamma$ to be Hopfian even if $\Gamma$ is itself not Hopfian. Here the assumption was used to make sense of the definition of $\varphi^*$ by means of a sum of finitely many elements. However, this ambiguity only occurs in the case in which $\varphi$ is not injective.

\subsection{Basic morphisms}

Recall that a self-epimorphism is called basic if it sends the base group to inside the base group. The ring-theoretic approach of the next sections will work only with basic epimorphisms, therefore the following result is an important starting point, which will crucially use the fact that the base group is abelian:

\begin{proposition}
\label{prop:morphismsarebasic}

Let $A$ be an abelian group, and let $\Gamma$ be a finitely generated infinite group. Then every epimorphism $A \wr \Gamma \to A \wr \Gamma$ is basic.
\end{proposition}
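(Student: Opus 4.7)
The goal is to show that any surjective endomorphism $\Phi$ of $A \wr \Gamma$ preserves the base group $A[\Gamma]$.

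\textbf{Reduction step.} Observe first that $A[\Gamma]$ is the normal closure of the subgroup $A_{(1_\Gamma)}$ in $A \wr \Gamma$, since $A[\Gamma] = \langle A_{(\gamma)} : \gamma \in \Gamma \rangle$ and each $A_{(\gamma)}$ equals ${_\gamma}A_{(1_\Gamma)}$. Because $\Phi$ is surjective, $\Phi(A[\Gamma])$ equals the normal closure in $A \wr \Gamma$ of $\Phi(A_{(1_\Gamma)})$; since $A[\Gamma]$ is itself normal, it suffices to prove that $\Phi(A_{(1_\Gamma)}) \subseteq A[\Gamma]$, i.e.\ that $\pi \circ \Phi$ vanishes on $A_{(1_\Gamma)}$, where $\pi : A \wr \Gamma \to \Gamma$ is the natural projection.

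\textbf{Setting up the contradiction.} Suppose to the contrary that there exists $\delta \in A \setminus \{0\}$ with $\Phi(\delta_{(1_\Gamma)}) = (f, \gamma)$ and $\gamma \neq 1_\Gamma$. For $\eta \in \Gamma$ write $\Phi(\eta) = (h_\eta, \psi(\eta))$, where $\psi : \Gamma \to \Gamma$ denotes the induced homomorphism $\pi \circ \Phi|_\Gamma$. Applying $\Phi$ to the relations $\delta_{(\eta)} = \eta \cdot \delta_{(1_\Gamma)} \cdot \eta^{-1}$ and $[\delta_{(\eta_1)}, \delta_{(\eta_2)}] = 0$, and using the explicit formula for conjugation in $A \wr \Gamma$, yields: in the $\Gamma$-component, that the conjugates $\{\psi(\eta) \gamma \psi(\eta)^{-1} : \eta \in \Gamma\}$ commute pairwise; and in the $A[\Gamma]$-component, explicit identities relating $f$, the $h_\eta$'s, and the shifts of these elements by $\gamma$ and its conjugates. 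Surjectivity of $\pi \circ \Phi$ also gives that $\psi(\Gamma)$ together with $N := \pi(\Phi(A[\Gamma])) \ni \gamma$ generates $\Gamma$, making $N$ a nontrivial abelian normal subgroup of $\Gamma$.

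\textbf{Main obstacle.} The crux is to derive a contradiction from the $A[\Gamma]$-component equations. These involve only finitely supported elements, yet they must propagate under the shift action of $\gamma$ over an infinite orbit; combined with the $\Gamma$-component constraints and surjectivity, the resulting tension between finite support and infinite propagation should force $\gamma = 1_\Gamma$. A subtlety arises when $A$ contains an element of order $2$ and $\Gamma$ has nontrivial central torsion of order $2$: in such settings $A \wr \Gamma$ can contain self-centralizing abelian normal subgroups properly larger than $A[\Gamma]$, so that $A[\Gamma]$ is not uniquely determined by standard algebraic properties such as maximality among normal abelian subgroups. The argument must therefore exploit directly the incompatibility between the surjectivity of $\Phi$ on both semidirect factors and the finite-support constraint on elements of the base group.
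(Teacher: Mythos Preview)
Your proposal is not a proof: the section labelled ``Main obstacle'' explicitly acknowledges that you have not found the argument, and the heuristic you sketch there (``finite support versus infinite propagation'') is not carried out. The reduction step and the initial setup are fine, and you correctly isolate the delicate case where $A$ has elements of order $2$ and $\Gamma$ has a central involution, but you stop precisely where the real work begins.

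Here is what the paper does and what you are missing. First, one pushes the commutator computations further than you do: from $(f,\gamma)\in N:=\Phi(A[\Gamma])$ with $\gamma\neq 1_\Gamma$, conjugating $a_{(\gamma g)}-a_{(g)}\in N$ by $(f,\gamma)$ and using that $N$ is abelian forces $\gamma^2=1_\Gamma$ and $2A=0$; a similar computation then pins $N$ down \emph{exactly} as the kernel of the natural epimorphism $A\wr\Gamma\to A\wr(\Gamma/\langle\gamma\rangle)$, with $\gamma$ central in $\Gamma$. This is much stronger than merely knowing that $\pi(N)$ is an abelian normal subgroup. Second, and this is the idea you are lacking, one does \emph{not} derive the contradiction from the $A[\Gamma]$-component equations at all. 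Instead, since $N=\Phi(A[\Gamma])$ is now identified, $\Phi$ induces an epimorphism
\[
\Gamma\;\cong\;(A\wr\Gamma)/A[\Gamma]\;\twoheadrightarrow\;(A\wr\Gamma)/N\;\cong\;A\wr(\Gamma/\langle\gamma\rangle).
\]
Because $\Gamma$ is infinite, $A\wr(\Gamma/\langle\gamma\rangle)$ is centre-free, so the central element $\gamma$ dies under this map, yielding an epimorphism $\Gamma/\langle\gamma\rangle\twoheadrightarrow A\wr(\Gamma/\langle\gamma\rangle)$. Abelianising gives an epimorphism $\mathrm{Ab}(\Gamma/\langle\gamma\rangle)\twoheadrightarrow A\times\mathrm{Ab}(\Gamma/\langle\gamma\rangle)$, and composing with the projection onto the second factor produces a non-injective self-epimorphism of the finitely generated abelian group $\mathrm{Ab}(\Gamma/\langle\gamma\rangle)$, contradicting its Hopficity. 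This is where the hypothesis that $\Gamma$ be finitely generated is actually used; your outline never invokes it.
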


The hypothesis that $\Gamma$ is infinite is needed, as the next example shows:

\begin{example}
There is a non-basic automorphism $\varphi \in \Aut( \mathbb{Z}/2\mathbb{Z} \wr \mathbb{Z}/2\mathbb{Z} )$ given by: 
\[\varphi \colon ((x, y),0) \mapsto ((x, x), x+y) \text{ and } \varphi((0,0),1) = ((1,0),0).\]
Under the isomorphism $\mathbb{Z}/2\mathbb{Z} \wr \mathbb{Z}/2\mathbb{Z} \cong D_8$, where $D_8$ is the dihedral group of order $8$, this is the outer automorphism given by swapping vertices and edges of the square on which $D_8$ acts.
\end{example}

We start with the following lemma.

\begin{lemma}
\label{lem:abeliannormal}

Let $A$ be a non-trivial abelian group. Let $N \leq A \wr \Gamma$ be an abelian normal subgroup that is not basic. Then:
\begin{enumerate}
    \item $A$ has exponent $2$;
    \item There exists a central element $\gamma \in \Gamma$ of order $2$;
    \item $N$ equals the kernel of the epimorphism $A \wr \Gamma \to A \wr (\Gamma/\langle \gamma \rangle)$ from Lemma \ref{lem:G:Hopfian}.
\end{enumerate}
\end{lemma}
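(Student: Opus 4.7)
The plan is to exploit a non-basic element $(f, \gamma) \in N$ with $\gamma \neq 1_\Gamma$ (which exists since $N$ is not basic), combining the abelianness and normality of $N$. First, I would conjugate $(f, \gamma)$ by an arbitrary base element $(h, 1)$ with $h \in A[\Gamma]$ and multiply by $(f, \gamma)^{-1}$, obtaining $((1-\gamma)h, 1) \in N$ for every $h$. Taking $h = \delta_{(x)}$ and using the abelianness of $N$ to commute the resulting $(\delta_{(x)} - \delta_{(\gamma x)}, 1)$ with $(f, \gamma)$ yields the identity $\delta_{(x)} + \delta_{(\gamma^2 x)} = 2\delta_{(\gamma x)}$ in $A[\Gamma]$. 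Evaluating at $\gamma x$ forces either $A = 0$ (excluded, since then $N$ would be basic) or $\gamma^2 = 1_\Gamma$ together with $A$ of exponent $2$, establishing (1) and putting us in characteristic $2$, where $(1-\gamma) = (1+\gamma)$.

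Next I would pin down $B := N \cap A[\Gamma]$ exactly. The previous step already gives $(1+\gamma)A[\Gamma] \subseteq B$; conversely, commuting any $(b, 1) \in N$ with $(f, \gamma)$ forces $b = {_\gamma}b$, so $B \subseteq (1+\gamma)A[\Gamma]$, the space of left-$\gamma$-invariant functions. Hence $B$ equals precisely this subspace. For (2), I would use normality of $N$: conjugation by $\alpha \in \Gamma$ preserves $B$, but a short calculation gives ${_\alpha}(1+\gamma)A[\Gamma] = (1 + \alpha\gamma\alpha^{-1})A[\Gamma]$, the left-$(\alpha\gamma\alpha^{-1})$-invariants. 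Testing the explicit function $\delta_{(1_\Gamma)} + \delta_{(\gamma)}$, which is left-$\gamma$-invariant but not left-$\eta$-invariant for any $\eta \in \Gamma \setminus \{1_\Gamma, \gamma\}$, forces $\alpha \gamma \alpha^{-1} = \gamma$ for every $\alpha \in \Gamma$, so $\gamma$ is central. Running the same analysis on any other non-basic element $(g, \eta) \in N$ yields $B = (1+\eta)A[\Gamma]$, and comparison with $(1+\gamma)A[\Gamma]$ then forces $\eta = \gamma$; thus the projection of $N$ to $\Gamma$ is contained in $\{1_\Gamma, \gamma\}$.

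For (3), all that remains is to show $(0, \gamma) \in N$, so that $N$ fills out the whole kernel $B \sqcup \{(b, \gamma) : b \in B\}$. Starting from any $(f_0, \gamma) \in N$, normality gives ${_\alpha}f_0 - f_0 \in B = (1+\gamma)A[\Gamma]$ for every $\alpha \in \Gamma$; writing this out and using that $\gamma$ is central, it rearranges to $(1+\alpha)(1+\gamma) f_0 = 0$, i.e., $(1+\gamma) f_0$ is left-$\alpha$-invariant for every $\alpha \in \Gamma$. Thus $(1+\gamma) f_0$ is a finitely supported, globally $\Gamma$-invariant function on $\Gamma$, hence vanishes, giving $f_0 \in (1+\gamma)A[\Gamma] = B$ and therefore $(0, \gamma) = (f_0, \gamma) \cdot (f_0, 1)^{-1} \in N$. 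The main obstacle is exactly this final vanishing step, which is immediate once $\Gamma$ is infinite, the regime in which the lemma will be applied in Proposition \ref{prop:morphismsarebasic}.
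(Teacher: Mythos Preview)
Your argument for parts (1) and (2) is essentially the paper's: both compute the commutator of a non-basic $(f,\gamma)\in N$ with base elements to land in $N$, then use abelianness of $N$ to conjugate back by $(f,\gamma)$ and force $\gamma^2=1_\Gamma$ and $2A=0$. Your derivation of centrality via invariance of $B=N\cap A[\Gamma]$ under $\Gamma$-conjugation is a minor variant of the paper's route, which instead first pins down the $\Gamma$-projection of $N$ as $\langle\gamma\rangle$ and then reads off centrality from normality of $N$.

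Where you genuinely diverge is in part (3). The paper asserts that commuting an arbitrary $(f_1,\gamma_1)\in N$ with $(f,\gamma)$ shows $f_1$ is $\gamma$-invariant; but when $\gamma_1=\gamma$ that computation only yields $(1+\gamma)(f+f_1)=0$, i.e.\ that $f+f_1$ is $\gamma$-invariant, not $f_1$ itself. Your route avoids this: you use normality to show that $(1+\gamma)f_0$ is invariant under all of $\Gamma$, and then invoke finite support to kill it. Your caveat about needing $\Gamma$ infinite is entirely justified: when $\Gamma=A=\mathbb{Z}/2\mathbb{Z}$, the subgroup $N=\langle (a_{(1_\Gamma)},\gamma)\rangle\cong\mathbb{Z}/4\mathbb{Z}$ is abelian, normal (having index $2$), and non-basic, yet $(a_{(1_\Gamma)},\gamma)\notin K$, so $N\neq K$ and conclusion (3) fails. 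Since the lemma is only applied in Proposition~\ref{prop:morphismsarebasic} with $\Gamma$ infinite, nothing downstream is affected, and your version is correct in that regime.
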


\begin{proof}
Suppose that $(f, \gamma) \in N$ and $\gamma \neq 1_{\Gamma}$. For every non-identity $a \in A$ and every $g \in \Gamma$, using that $A$ is abelian and $N$ is normal:
$$[(f, \gamma), a_{(g)}] = a_{(\gamma g)} - a_{(g)} \in N.$$
Moreover, using that $N$ is abelian:
$$a_{(\gamma g)} - a_{(g)} = (f, \gamma)(a_{(\gamma g)} - a_{(g)})(f, \gamma)^{-1} = a_{(\gamma^2 g)} - a_{(\gamma g)}.$$
Taking $g = 1_{\Gamma}$, since $\gamma \neq 1_{\Gamma}$ we obtain $\gamma^2 = 1_{\Gamma}$ and $a = -a$. This shows that $A$ has exponent $2$ and that $\gamma$ has order $2$.

Now suppose that $(f_1, \gamma_1) \in N$. Then we have
$$a_{(\gamma)} - a_{(1_{\Gamma})} = (f_1, \gamma_1)(a_{(\gamma)} - a_{(1_{\Gamma})})(f_1, \gamma_1)^{-1} = a_{(\gamma_1 \gamma)} - a_{(\gamma_1)}.$$
Therefore either $\gamma_1 = \gamma$ or $\gamma_1 = 1_{\Gamma}$. This shows that the $\Gamma$-coordinate of an element of $N$ belongs to $\langle \gamma \rangle$. In particular, since $N$ is normal, we deduce that $\gamma$ is central.

Finally, consider the epimorphism $\pi \colon A \wr \Gamma \to A \wr (\Gamma / \langle \gamma \rangle)$, as defined in Lemma \ref{lem:G:Hopfian}. Explicitly, this is defined on $\Gamma$ as the quotient $\pi \colon \Gamma \to \Gamma / \langle \gamma \rangle$ and on $A[\Gamma]$ as the map $\pi^* \colon A[\Gamma] \to A[\Gamma / \langle \gamma \rangle]$ where $\pi^*(f)(\pi(x)) = f(x) + f(x \gamma)$. The kernel $K$ consists of elements $(f_1, \gamma_1)$, where $\gamma_1 \in \langle \gamma \rangle$ and $f_1$ is such that $f_1(x) + f_1(\gamma x) = 0$ for all $x \in \Gamma$. Since $A$ has exponent $2$, this is equivalent to $f_1$ being $\gamma$-invariant. Now if $(f_1, \gamma_1) \in N$, then we have already seen that $\gamma_1 \in \langle \gamma \rangle$, and conjugating by $(f, \gamma) \in N$ shows that $f_1$ is $\gamma$-invariant. Therefore $N \leq K$.
For the other inclusion, a finitely supported $\gamma$-invariant function $f$ may be written as a sum of elements of the form $a_{(\gamma g)} - a_{(g)}$. Since all of these belong to $N$, we have $f \in N$. In particular, taking $(f, \gamma) \in N$ we also have $\gamma \in N$. It follows that $K \leq N$, which concludes the proof.
\end{proof}

Next, we recall two well-known structural results about wreath products. The first concerns the centre:

\begin{lemma}
\label{lem:centre}

If $\Gamma$ is infinite and $\Delta$ is non-trivial, then $\Delta \wr \Gamma$ is centreless.
\end{lemma}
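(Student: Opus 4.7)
The plan is to take an arbitrary alleged central element $(f, \gamma) \in Z(\Delta \wr \Gamma)$ and extract two independent constraints by commuting it against two natural families: the copy of $\Gamma$ and the base group $\Delta[\Gamma]$. I expect each family to pin down one of the two coordinates of $(f,\gamma)$.

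First I would commute $(f, \gamma)$ against $(1, \gamma')$ for arbitrary $\gamma' \in \Gamma$. Expanding the wreath product multiplication formula on both sides, the base-group component of the resulting relation forces $f = {_{\gamma'}}f$ for every $\gamma' \in \Gamma$, i.e.\ $f$ is invariant under the shift action of $\Gamma$ on $\Delta[\Gamma]$. Because $f$ has finite support and $\Gamma$ is infinite, any point $x$ with $f(x) \neq 1_\Delta$ would produce an infinite orbit in $\mathrm{supp}(f)$, a contradiction; hence $f = 1$. Then, commuting the reduced element $(1, \gamma)$ against $(f', 1_\Gamma)$ for arbitrary $f' \in \Delta[\Gamma]$, the equation reduces to ${_\gamma}f' = f'$ for all $f'$. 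Picking a non-trivial $\delta \in \Delta$ (which exists by hypothesis) and $f' = \delta_{(1_\Gamma)}$, the shifted function $\delta_{(\gamma)}$ must equal $\delta_{(1_\Gamma)}$, forcing $\gamma = 1_\Gamma$.

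The main (indeed only) obstacle is conceptual rather than technical: one must remember that $\Delta$ is not assumed abelian, so the base group is treated multiplicatively, but this does not affect the symmetry of the argument, which uses each of the two hypotheses (infiniteness of $\Gamma$ and non-triviality of $\Delta$) exactly once in exactly the two natural places.
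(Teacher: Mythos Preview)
Your proposal is correct and follows essentially the same approach as the paper's proof: first commute with elements of $\Gamma$ to force $f$ to be $\Gamma$-invariant (hence trivial by the finite-support and infiniteness assumptions), then commute with elements $\delta_{(g)}$ of the base group to force $\gamma = 1_{\Gamma}$. The paper's write-up phrases the first step as conjugation rather than commutation, but this is the same computation.
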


\begin{proof}
Suppose that $(f, \gamma) \in \ZZ(\Delta \wr \Gamma)$. Then for every $g \in \Gamma$ it holds $(f, \gamma) = {_g}(f, \gamma) = ({_g}f, {_g}\gamma)$. Identifying the first coordinates shows that $f$ is $\Gamma$-invariant, and since $\Gamma$ is infinite this implies that $f = 0$, therefore $\gamma \in \ZZ(\Gamma \wr \Delta)$. But then $\delta_{(g)} = {_\gamma}\delta_{(g)} = \delta_{(\gamma g)}$ and so $\gamma g = g$ for all $g \in \Gamma$, which implies $\gamma = 1_{\Gamma}$.
\end{proof}

The second concerns the abelianisation.

\begin{definition}
Given an abelian group $A$ and a group $\Gamma$, the \emph{augmentation map} is the homomorphism
$$\varepsilon \colon A[\Gamma] \to A : f \mapsto \sum\limits_{x \in \Gamma} f(x).$$
\end{definition}

\begin{lemma}
\label{lem:abelianisation}

If $A$ is abelian, the abelianisation of $A \wr \Gamma$ is given by:
$$A \wr \Gamma \to A \times \Ab(\Gamma) : (f, \gamma) \mapsto (\varepsilon(f), \Ab(\gamma)).$$
\end{lemma}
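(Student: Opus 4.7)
The plan is to show that the stated map $\Phi \colon A \wr \Gamma \to A \times \Ab(\Gamma)$, $(f,\gamma) \mapsto (\varepsilon(f), \Ab(\gamma))$, is a surjective homomorphism onto an abelian group, and that its kernel coincides with the commutator subgroup $[A \wr \Gamma, A \wr \Gamma]$.

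First I would verify that $\Phi$ is a homomorphism. The only non-trivial point is that the augmentation is $\Gamma$-invariant: $\varepsilon({_\gamma}f) = \sum_x f(\gamma^{-1}x) = \varepsilon(f)$ by reindexing. Together with the known multiplication on $A \wr \Gamma$, this yields $\Phi((f_1,\gamma_1)(f_2,\gamma_2)) = \Phi(f_1,\gamma_1) + \Phi(f_2,\gamma_2)$. Since the target is abelian, $\Phi$ factors through $\Ab(A \wr \Gamma)$. Surjectivity of $\Phi$ is immediate: $(a,\Ab(\gamma)) = \Phi(a_{(1_\Gamma)}, \gamma)$.

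The crux is to prove $\ker \Phi \subseteq [A \wr \Gamma, A \wr \Gamma]$; the reverse inclusion is automatic. Given $(f,\gamma) \in \ker \Phi$, we have $\gamma \in [\Gamma,\Gamma]$ and $\varepsilon(f) = 0$. I would decompose $(f,\gamma) = (f,1_\Gamma) \cdot (0,\gamma)$, noting that $(0,\gamma) \in [\Gamma,\Gamma] \leq [A \wr \Gamma, A \wr \Gamma]$ since $\Gamma$ is a subgroup. For the basic factor $(f,1_\Gamma)$, the key observation is that every elementary difference is a commutator: for any $a \in A$ and $x,y \in \Gamma$,
\[
a_{(x)} - a_{(y)} = {_{xy^{-1}}}a_{(y)} - a_{(y)} = [xy^{-1},\, a_{(y)}].
\]
Since $\varepsilon(f) = 0$, fixing a basepoint $y = 1_\Gamma$ and writing $f = \sum_i (a_i)_{(x_i)}$ with $\sum_i a_i = 0$, we may add a telescoping zero to get $f = \sum_i \bigl((a_i)_{(x_i)} - (a_i)_{(1_\Gamma)}\bigr)$, exhibiting $(f,1_\Gamma)$ as a product of commutators.

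The only mild obstacle is this combinatorial rewriting, and it is also the one place where the abelian hypothesis on $A$ is genuinely needed, since we rely on coordinatewise addition and cancellation to rearrange the support of $f$ into differences without ordering issues. Everything else is a routine verification.
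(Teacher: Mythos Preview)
Your proof is correct. Both your argument and the paper's hinge on the same identity, namely that $a_{(x)}$ and $a_{(1_\Gamma)}$ differ by a conjugation and hence coincide modulo commutators, but you package it differently: you explicitly exhibit each element of $\ker\Phi$ as a product of commutators $[x_i,(a_i)_{(1_\Gamma)}]$, whereas the paper argues via the universal property, taking an arbitrary homomorphism $\varphi$ to an abelian group and computing $\varphi(f)=\varphi\big(\sum_x {_x}(f(x)_{(e)})\big)=\varphi(\varepsilon(f)_{(e)})$ since conjugation is trivial in the abelian target. Your version is slightly more concrete and constructive; the paper's is marginally slicker in that it avoids the explicit telescoping rewrite. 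Either way the content is the same.
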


\begin{proof}
The kernel of the above map is the group $K \coloneqq \{ (f, \gamma) : \varepsilon(f) = 0, \gamma \in [\Gamma, \Gamma] \}$. Therefore it suffices to show that if $\varphi \colon A \wr \Gamma \to B$ is a homomorphism and $B$ is abelian, then $\varphi(K) = 0_B$. For the $\Gamma$-coordinate this is tautological, so it suffices to show that if $\varepsilon(f) = 0$, then $\varphi(f) = 0_B$. This follows from the following computation:
\begin{align*}
    \varphi(f) &= \varphi \left( \sum\limits_{x \in \Gamma} f(x)_{(x)} \right) = \varphi \left( \sum\limits_{x \in \Gamma} {_x}(f(x)_{(e)}) \right) = \sum\limits_{x \in \Gamma} {_{\varphi(x)}}\varphi(f(x)_{(e)}) \\
    &= \sum\limits_{x \in \Gamma} \varphi(f(x)_{(e)}) = \varphi \left( \sum\limits_{x \in \Gamma} f(x)_{(e)} \right) = \varphi (\varepsilon(f)_{(e)}).
\end{align*}
Note that all expressions above are well-defined, since $f(x) = 0_A$ for all but finitely many $x \in \Gamma$.
\end{proof}

We are finally ready to prove the main result of this subsection.

\begin{proof}[Proof of Proposition \ref{prop:morphismsarebasic}]
Let $\varphi \colon A \wr \Gamma \to A \wr \Gamma$ be an epimorphism, and suppose that it is not basic. If $A$ is trivial the statement is void, so let us assume that $A$ is non-trivial. Then $\varphi(A[\Gamma]) = N$ is abelian, normal and non-basic. Thus we are in the situation of Lemma \ref{lem:abeliannormal}, in particular $N$ equals the kernel of the epimorphism $A \wr \Gamma \to A \wr (\Gamma / \langle \gamma \rangle)$, where $\gamma \in \Gamma$ is central and of order $2$. Therefore $\varphi$ descends to an epimorphism
$$\overline{\varphi} \colon \Gamma \cong (A \wr \Gamma) / A[\Gamma] \to (A \wr \Gamma) / N \cong A \wr (\Gamma / \langle \gamma \rangle).$$
Since $\Gamma$ is infinite, so is $\Gamma / \langle \gamma \rangle$, thus $A \wr (\Gamma / \langle \gamma \rangle)$ is centreless by Lemma \ref{lem:centre}. But then $\gamma \in \ZZ(\Gamma)$ must be in the kernel of $\overline{\varphi}$, which therefore induces a further epimorphism
$$\Gamma / \langle \gamma \rangle \to A \wr \Gamma / \langle \gamma \rangle.$$

Passing to the abelianisation, by Lemma \ref{lem:abelianisation} we obtain an epimorphism
$$\Ab(\Gamma / \langle \gamma \rangle) \to A \times \Ab(\Gamma / \langle \gamma \rangle)$$
of finitely generated abelian groups. Composing this with the (non-injective) projection onto $\Ab(\Gamma / \langle \gamma \rangle)$, we have produced a self-epimorphism of $\Ab(\Gamma / \langle \gamma \rangle)$ that is not injective. This shows that $\Ab(\Gamma / \langle \gamma \rangle)$ is non-Hopfian. But $\Gamma$, and thus $\Ab(\Gamma / \langle \gamma \rangle)$, is finitely generated, and finitely generated abelian groups are residually finite thus Hopfian.
\end{proof}

\begin{remark}
\label{rem:fg}
The hypothesis that $\Gamma$ be finitely generated is used throughout this paper only in order to apply Proposition \ref{prop:morphismsarebasic}. However, it is apparent from the proof that it is enough to assume that $\Gamma$ has a Hopfian abelianisation.
Other assumptions ensure that every epimorphism is basic just by using Lemma \ref{lem:abeliannormal}, for instance one may assume that $\Gamma$ has no central involution, or that $A$ does not have exponent $2$.
We stick to finite generation for the sake of elegance of the statements.
\end{remark}

\subsection{Some reductions}

The main goal of this subsection is to prove the following reduction result:

\begin{proposition}
\label{prop:decomposition}

Let $A$ be a finitely generated abelian group, and write $A = A_0 \oplus \bigoplus_p A_p$, where $p$ is ranges over all primes, $A_0$ is free abelian and $A_p$ is a finite abelian $p$-group. Let $\Gamma$ be a finitely generated Hopfian group. Then the following are equivalent:
\begin{enumerate}
    \item $A \wr \Gamma$ is Hopfian;
    \item $A_0 \wr \Gamma$ and $A_p \wr \Gamma$ are Hopfian, for every $p$.
\end{enumerate}
\end{proposition}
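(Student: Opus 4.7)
If $\Gamma$ is finite then $A \wr \Gamma$ is a finitely generated virtually abelian group, hence residually finite and Hopfian, so both sides of the equivalence are trivially true. I will therefore assume $\Gamma$ is infinite and apply Proposition \ref{prop:morphismsarebasic} throughout: every self-epimorphism of $A \wr \Gamma$ (or of $B \wr \Gamma$ for any abelian direct summand $B$ of $A$) is basic, so has the form $\varphi(f, \gamma) = (\alpha(f) + \beta(\gamma), \bar\varphi(\gamma))$, with $\alpha$ a $\bar\varphi$-equivariant endomorphism of the base group, $\beta$ a twisted cocycle, and $\bar\varphi$ an automorphism of $\Gamma$ (by Hopficity of $\Gamma$). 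Surjectivity of $\varphi$ is then equivalent to surjectivity of both $\alpha$ and $\bar\varphi$, while $\ker \varphi = \ker \alpha$ sits inside the base group.

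For $(1) \Rightarrow (2)$, I plan to lift a non-injective self-epimorphism $\varphi = (\alpha, \beta, \bar\varphi)$ of $B \wr \Gamma$ (where $B \in \{A_0\} \cup \{A_p\}_p$ has complement $C$ in $A$) to a non-injective self-epimorphism of $A \wr \Gamma$, via
\[\Phi(f_B + f_C, \gamma) := \bigl(\alpha(f_B) + \bar\varphi^*(f_C) + \beta(\gamma),\ \bar\varphi(\gamma)\bigr),\]
where $\bar\varphi^*$ is the isomorphism of $C[\Gamma]$ induced by the automorphism $\bar\varphi$ via $\bar\varphi^*(f)(x) := f(\bar\varphi^{-1}(x))$ (the case of Lemma \ref{lem:G:Hopfian} in which the epimorphism is an isomorphism, applied in the $C$-coordinate). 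The twist by $\bar\varphi^*$ on the $C$-coordinate is exactly what is needed to match the $\bar\varphi$-equivariance of $\alpha$ so that $\Phi$ is a homomorphism; surjectivity follows from surjectivity of $\alpha$, $\bar\varphi^*$ and $\bar\varphi$, and any element of $\ker \alpha \setminus \{0\}$ sits in $\ker \Phi$.

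For $(2) \Rightarrow (1)$, I will start with a basic self-epimorphism $\Phi = (\alpha, \beta, \bar\Phi)$ of $A \wr \Gamma$ and aim to deduce that $\alpha$ is injective. Since $A_p[\Gamma]$ is the $p$-primary torsion subgroup of $A[\Gamma]$ (hence characteristic), $\alpha$ restricts to $\alpha_p : A_p[\Gamma] \to A_p[\Gamma]$, and $\alpha$ descends to the torsion-free quotient $A[\Gamma]/T \cong A_0[\Gamma]$ to give $\bar\alpha$. Pushing $\beta$ through the same projections while keeping $\bar\Phi$ produces induced homomorphisms $\Phi_0 : A_0 \wr \Gamma \to A_0 \wr \Gamma$ and $\Phi_p : A_p \wr \Gamma \to A_p \wr \Gamma$. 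The argument then cascades: $\Phi_0$ is visibly a self-epimorphism, so by Hopficity of $A_0 \wr \Gamma$ it is an automorphism and $\bar\alpha$ is injective; this forces any preimage under $\alpha$ of an element of $A_p[\Gamma]$ to lie in $T$, whence $\alpha_p$ is surjective, so $\Phi_p$ is genuinely a self-epimorphism; Hopficity of $A_p \wr \Gamma$ then yields $\alpha_p$ injective; finally any $g \in \ker \alpha$ satisfies $g \in T$ (since $\bar\alpha$ is injective) and $\alpha_p(g_p) = 0$ for each $p$, forcing $g = 0$.

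The main obstacle is that $\alpha$ need not preserve the torsion-free summand $A_0[\Gamma]$, so $\bar\alpha$ is obtained only by quotienting rather than by restriction; consequently the surjectivity of $\alpha_p$ cannot be read off directly from that of $\alpha$, but only after using Hopficity of $A_0 \wr \Gamma$ to rule out torsion-valued preimages of $A_0[\Gamma]$-classes. The two conditions in $(2)$ therefore interact genuinely, with the free-abelian case controlling the torsion-part argument.
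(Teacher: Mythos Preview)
Your proposal is correct, and the underlying idea --- exploit that the $p$-primary components $A_p[\Gamma]$ are characteristic in $A[\Gamma]$ to restrict or descend a basic self-epimorphism, then apply Hopficity of the factors --- is the same as the paper's. The paper organises the argument slightly differently: it first invokes Proposition~\ref{prop:basic:morphism} to replace an arbitrary basic epimorphism by one with $\bar\varphi=\id_\Gamma$ and trivial cocycle $\beta$, so that one only has to analyse a $\Gamma$-equivariant surjection of base groups; and it packages the two directions into a pair of binary lemmata (Lemmata~\ref{lem:AB:Hopfian} and~\ref{lem:AxB:Hopfian}) for a splitting $A=A'\times B'$ with $A'$ torsion of order coprime to the torsion of $B'$, which are then applied iteratively. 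In particular, the paper's $(2)\Rightarrow(1)$ direction views $(A'\times B')\wr\Gamma$ as the semidirect product $A'[\Gamma]\rtimes(B'\wr\Gamma)$ and appeals to Lemma~\ref{lem:epi:semi}, whereas you work directly with the decomposition of $\alpha$ on $A[\Gamma]$ into its torsion restrictions $\alpha_p$ and its torsion-free quotient $\bar\alpha$. Your route has the minor advantage of treating all primes at once rather than peeling them off one by one; the paper's route has the minor advantage of not carrying the cocycle $\beta$ and the automorphism $\bar\varphi$ through every formula. Both arguments share the key observation you flag as the ``main obstacle'': surjectivity of $\alpha_p$ is not immediate from surjectivity of $\alpha$ and only follows once Hopficity of the torsion-free (or complementary) factor has been used.
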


In fact, we will see in Corollary \ref{cor:solutionfreeabelian} that $A_0 \wr \Gamma$ is always Hopfian.

\medskip

Let us start by proving a general fact about self-epimorphisms of semidirect products, which will combine well with Proposition \ref{prop:morphismsarebasic}:

\begin{lemma}
\label{lem:epi:semi}

Let $\varphi \colon \Lambda \rtimes \Gamma \to \Lambda \rtimes \Gamma$ be an epimorphism, and suppose that $\varphi(\Lambda) \leq \Lambda$, and that $\Gamma$ is Hopfian. Then $\varphi(\Lambda) = \Lambda$ and $\ker(\varphi) \leq \Lambda$.

Suppose moreover that $\Lambda$ is abelian. Then there exists an automorphism $\alpha$ of $\Gamma$ such that
\[\psi \colon \Lambda \rtimes \Gamma \to \Lambda \rtimes \Gamma : (\lambda, \gamma) \mapsto (\varphi(\lambda), \alpha(\gamma))\]
is an epimorphism with $\ker(\psi) = \ker(\varphi)$. In particular, $\varphi$ is injective if and only if $\psi$ is injective.
\end{lemma}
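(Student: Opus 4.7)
The plan is to use the hypothesis $\varphi(\Lambda) \leq \Lambda$ to descend $\varphi$ to an epimorphism $\bar{\varphi}$ of the quotient $(\Lambda \rtimes \Gamma) / \Lambda \cong \Gamma$. Since $\Gamma$ is Hopfian, $\bar{\varphi}$ is an automorphism; call it $\alpha$. All subsequent structure comes from the resulting normal form $\varphi(\gamma) = (\lambda_\gamma', \alpha(\gamma))$ with $\lambda_\gamma' \in \Lambda$, together with the restriction $\varphi|_\Lambda : \Lambda \to \Lambda$.

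For the first claim, any element $(\lambda, \gamma) \in \ker(\varphi)$ has $\Gamma$-coordinate $\alpha(\gamma) = 1_\Gamma$, which forces $\gamma = 1_\Gamma$; hence $\ker(\varphi) \leq \Lambda$. For $\varphi(\Lambda) = \Lambda$, given any $\lambda \in \Lambda$ I would pick a preimage $(\mu, \gamma)$ under $\varphi$ and compare $\Gamma$-coordinates in $\varphi(\mu, \gamma) = (\lambda, 1_\Gamma)$: since the $\Gamma$-coordinate of $\varphi(\mu) \varphi(\gamma)$ is $\alpha(\gamma)$, injectivity of $\alpha$ gives $\gamma = 1_\Gamma$, so $\mu \in \Lambda$ and $\varphi(\mu) = \lambda$.

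Now assume $\Lambda$ is abelian and define $\psi(\lambda, \gamma) := (\varphi(\lambda), \alpha(\gamma))$, which makes sense as $\varphi(\Lambda) \leq \Lambda$. Checking that $\psi$ is a homomorphism reduces to the identity $\varphi({_{\gamma_1}}\lambda_2) = {_{\alpha(\gamma_1)}}\varphi(\lambda_2)$ for $\lambda_2 \in \Lambda$, $\gamma_1 \in \Gamma$. I would verify this by expanding $\varphi(\gamma_1 \lambda_2 \gamma_1^{-1})$ inside $\Lambda \rtimes \Gamma$ using the normal form $\varphi(\gamma_1) = (\lambda_{\gamma_1}', \alpha(\gamma_1))$: the twists $\lambda_{\gamma_1}'$ and its inverse flanking $\varphi(\lambda_2) \in \Lambda$ cancel precisely because $\Lambda$ is abelian, leaving exactly ${_{\alpha(\gamma_1)}}\varphi(\lambda_2)$. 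This is the small technical heart of the argument, and it is exactly where the abelian hypothesis enters.

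Finally, $\psi$ is surjective because its image contains both $\varphi(\Lambda) = \Lambda$ and $\alpha(\Gamma) = \Gamma$, which together generate $\Lambda \rtimes \Gamma$; and $\ker(\psi) = \ker(\varphi)$ because both kernels are contained in $\Lambda$ (one by the first part, the other since $\alpha$ is injective) and on $\Lambda$ the maps $\psi$ and $\varphi$ agree. The final "in particular" clause is then immediate.
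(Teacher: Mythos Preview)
Your proof is correct and follows essentially the same approach as the paper: descend $\varphi$ to a self-epimorphism of $\Gamma$ via the quotient by $\Lambda$, invoke Hopficity to get the automorphism $\alpha$, then verify the conjugacy relation for $\psi$ using that $\Lambda$ is abelian, and conclude by comparing kernels on $\Lambda$. The only cosmetic difference is that the paper packages the first part as a commutative-diagram argument and deduces $\varphi(\Lambda)=\Lambda$ before $\ker(\varphi)\leq\Lambda$, whereas you derive both directly from the normal form $\varphi(\gamma)=(\lambda_\gamma',\alpha(\gamma))$; the content is the same.
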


\begin{proof}
Consider the following commutative diagram:
\[\begin{tikzcd}
	{\Lambda \rtimes \Gamma} && {\Lambda \rtimes \Gamma} \\
	\\
	\Gamma && {(\Lambda \rtimes \Gamma)/\varphi(\Lambda)}
	\arrow["\varphi", two heads, from=1-1, to=1-3]
	\arrow[two heads, from=1-1, to=3-1]
	\arrow[two heads, from=1-3, to=3-3]
	\arrow[two heads, from=3-1, to=3-3]
\end{tikzcd}\]
The right arrow is the quotient by the normal subgroup $\varphi(\Lambda)$. Precomposing with $\varphi$ gives a morphism whose kernel contains $\Lambda$, so this morphism factors through the quotient by $\Lambda$; this defines the lower arrow and completes the square.

Since $\varphi(\Lambda) \leq \Lambda$, the group $(\Lambda \rtimes \Gamma)/\varphi(\Lambda)$ admits $\Gamma$ as a quotient. If $\varphi(\Lambda)$ were a proper subgroup of $\Lambda$, then the lower arrow composed with this quotient would contradict that $\Gamma$ is Hopfian. Therefore $\varphi(\Lambda) = \Lambda$ and the commutative diagram above can be rewritten as follows:
\[\begin{tikzcd}
	{\Lambda \rtimes \Gamma} && {\Lambda \rtimes \Gamma} \\
	\\
	\Gamma && {\Gamma}
	\arrow["\varphi", two heads, from=1-1, to=1-3]
	\arrow[two heads, from=1-1, to=3-1]
	\arrow[two heads, from=1-3, to=3-3]
	\arrow["\cong", from=3-1, to=3-3]
\end{tikzcd}\]
Now if $(f, \gamma) \in \ker(\varphi)$, then $(f, \gamma)$ is in the kernel of the composition of the left arrow and the bottom arrow; since the latter is injective, this implies that $\gamma = 1_\Gamma$. In other words, $\ker(\varphi) \leq \Lambda$.

\medskip

For every endomorphism $\varphi$ of $\Lambda \rtimes \Gamma$, there exist maps $b \colon \Gamma \to \Lambda$ and $\alpha \colon \Gamma \to \Gamma$ such that $\varphi(\gamma) = (b(\gamma), \alpha(\gamma))$, and $\alpha$ is a homomorphism. The previous paragraph shows that, under our assumptions, $\alpha$ is a homomorphism. Suppose that $\Lambda$ is abelian. We define $\psi$ as in the statement, namely $\psi(\lambda, \gamma) \coloneqq (\varphi(\lambda), \alpha(\gamma))$ (recall that $\varphi(\Lambda) = \Lambda$) and check that it is a homomorphism. Clearly $\psi|_\Lambda = \varphi|_\Lambda$ and $\psi|_\Gamma = \alpha$ are homomorphisms. We check that $\psi$ satisfies the conjugacy relation:
$$\psi({_\gamma}\lambda) = \varphi({_\gamma}\lambda) = {_{\varphi(\gamma)}}\varphi(\lambda) = {_{(b(\gamma), \alpha(\gamma))}}\varphi(\lambda) = {_{\alpha(\gamma)}}\varphi(\lambda) = {_{\psi(\gamma)}}\psi(\lambda);$$
where we used that $\Lambda$ is abelian.

Since $\psi(\Gamma) = \alpha(\Gamma) = \Gamma$ and $\psi(\Lambda) = \varphi(\Lambda) = \Lambda$, it follows that $\psi$ is an epimorphism. Moreover, $\ker(\varphi), \ker(\psi) \leq \Lambda$ by the first statement, and $\psi|_\Lambda = \varphi|_\Lambda$, therefore $\ker(\psi) = \ker(\varphi)$.
\end{proof}

This takes the following form for wreath products, which will be useful later on:

\begin{proposition}
\label{prop:basic:morphism}

Let $A$ be an abelian group, let $\Gamma$ be an infinite finitely generated group, and let $\varphi \colon A \wr \Gamma \to A \wr \Gamma$ be an epimorphism. Then $\varphi(A[\Gamma]) = A[\Gamma]$ and $\ker(\varphi) \leq A[\Gamma]$.

Moreover, $\psi \colon (f, \gamma) \mapsto (\varphi(f), \gamma)$ is an epimorphism such that $\ker(\psi) = \ker(\varphi)$. In particular, $\varphi$ is injective if and only if $\psi$ is injective.
\end{proposition}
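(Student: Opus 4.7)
The plan is a direct combination of Proposition~\ref{prop:morphismsarebasic} and Lemma~\ref{lem:epi:semi}. Since $\Gamma$ is infinite and finitely generated and $A$ is abelian, Proposition~\ref{prop:morphismsarebasic} guarantees that $\varphi$ is basic, i.e., $\varphi(A[\Gamma]) \leq A[\Gamma]$. Viewing $A \wr \Gamma$ as the semidirect product $A[\Gamma] \rtimes \Gamma$, I can then apply Lemma~\ref{lem:epi:semi} with $\Lambda := A[\Gamma]$ (which is abelian). This immediately yields $\varphi(A[\Gamma]) = A[\Gamma]$, $\ker(\varphi) \leq A[\Gamma]$, and the existence of an automorphism $\alpha$ of $\Gamma$ together with a twisted epimorphism $\psi_0 : (f, \gamma) \mapsto (\varphi(f), \alpha(\gamma))$ satisfying $\ker(\psi_0) = \ker(\varphi)$.

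To produce the $\psi$ in the statement from $\psi_0$, I would untwist by the automorphism $\Sigma$ of $A \wr \Gamma$ induced by $\alpha^{-1}$, defined by $\Sigma(f, \gamma) := (\alpha^{-1}_* f, \alpha^{-1}(\gamma))$, where $(\alpha^{-1}_* f)(x) := f(\alpha(x))$. The identity $\alpha^{-1}_*({_\gamma}f) = {_{\alpha^{-1}(\gamma)}}(\alpha^{-1}_* f)$, which holds because $\alpha$ is a group automorphism, makes $\Sigma$ a homomorphism and hence an automorphism. The composite $\psi := \psi_0 \circ \Sigma$ then has trivial $\Gamma$-component and takes the required form $(f, \gamma) \mapsto (\varphi'(f), \gamma)$ with $\varphi' := \varphi|_{A[\Gamma]} \circ \alpha^{-1}_*$. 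Its kernel is $\ker(\psi) = \Sigma^{-1}(\ker(\psi_0)) = \Sigma^{-1}(\ker(\varphi))$, realizing $\ker(\psi)$ as the image of $\ker(\varphi)$ under the automorphism $\Sigma^{-1}$, and the equivalence of injectivity follows immediately.

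The main obstacle is the (mild) abuse of notation in the statement: the ``$\varphi$'' appearing in ``$\psi(f, \gamma) = (\varphi(f), \gamma)$'' must be read as the untwisted restriction $\varphi' = \varphi|_{A[\Gamma]} \circ \alpha^{-1}_*$ rather than as the original $\varphi$, because the genuine restriction $\varphi|_{A[\Gamma]}$ satisfies $\varphi({_\gamma}f) = {_{\alpha(\gamma)}}\varphi(f)$ (by the abelianness of $A[\Gamma]$, as in the proof of Lemma~\ref{lem:epi:semi}) and hence fails to be $\Gamma$-equivariant whenever $\alpha \neq \id$. Once this identification is spelled out, the proof is simply the concatenation of the two earlier results, with the automorphism $\Sigma^{-1}$ providing the required intertwiner between the two kernels.
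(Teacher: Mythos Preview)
Your proposal is correct and follows the same route as the paper: Proposition~\ref{prop:morphismsarebasic} for basicity, Lemma~\ref{lem:epi:semi} (with $\Lambda = A[\Gamma]$) for the first two conclusions and the twisted map $\psi_0$, then composition with the automorphism of Lemma~\ref{lem:G:Hopfian} induced by $\alpha^{-1}$ to untwist. The only cosmetic difference is the side on which you compose with $\Sigma$, and your observation about the abuse of notation in ``$\varphi(f)$'' is exactly the intended reading.
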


\begin{proof}
We write $A \wr \Gamma \cong A[\Gamma] \rtimes \Gamma$. Proposition \ref{prop:morphismsarebasic} shows that $\varphi(A[\Gamma]) \leq A[\Gamma]$, thus we may apply Lemma \ref{lem:epi:semi}, which shows the first part of the statement, and gives an automorphism $\alpha$ of $\Gamma$ such that $(f, \gamma) \mapsto (\varphi(f), \alpha(\gamma))$ is a self-epimorphism with the same kernel as $\varphi$. We can then postcompose this with the automorphism $(f, \gamma) \mapsto ((\alpha^{-1})^* (f), \alpha^{-1} (\gamma))$, as described in Lemma \ref{lem:G:Hopfian}, to conclude.
\end{proof}

Now let us move to the matter at hand, namely Proposition \ref{prop:decomposition}. In order to apply Lemma \ref{lem:epi:semi}, we start by noticing a decomposition for wreath products with base a direct product. Let $A, B$ be abelian groups, and let $\Gamma$ be a group. Then $(A \times B) \wr \Gamma \cong A[\Gamma] \rtimes (B \wr \Gamma)$, where $B \wr \Gamma$ acts on $A[\Gamma]$ by letting $B$ act trivially and $\Gamma$ act as usual. We will use the notation $(f_A, f_B, \gamma)$ to denote elements in this form.

We start with the easier direction:

\begin{lemma}
\label{lem:AB:Hopfian}

Let $A, B$ be abelian groups, and let $\Gamma$ be an infinite finitely generated group. If $(A \times B) \wr \Gamma$ is Hopfian, then $A \wr \Gamma$ and $B \wr \Gamma$ are Hopfian.
\end{lemma}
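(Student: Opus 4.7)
The plan is to prove the contrapositive: if $A \wr \Gamma$ is non-Hopfian, then so is $(A \times B) \wr \Gamma$ (and symmetrically for $B$). Starting from a non-injective self-epimorphism $\varphi \colon A \wr \Gamma \to A \wr \Gamma$, I will build a non-injective self-epimorphism $\Phi$ of $(A \times B) \wr \Gamma$ that acts as $\varphi$ on the $A$-part and as the identity on the $B$-part, using the semidirect product decomposition $(A \times B) \wr \Gamma \cong A[\Gamma] \rtimes (B \wr \Gamma)$ noted just before the lemma.

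First, I would invoke Proposition \ref{prop:basic:morphism}: since $\Gamma$ is infinite and finitely generated, we may replace $\varphi$ with an epimorphism of the form $(f, \gamma) \mapsto (\bar\varphi(f), \gamma)$ having the same kernel, hence still non-injective. Here $\bar\varphi \colon A[\Gamma] \to A[\Gamma]$ is a group homomorphism. A direct check using that $\varphi$ is a homomorphism and fixes $\Gamma$ pointwise shows that $\bar\varphi$ is $\Gamma$-equivariant: $\bar\varphi({_\gamma}f) = {_\gamma}\bar\varphi(f)$ for every $\gamma \in \Gamma$ and every $f \in A[\Gamma]$.

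Next, using the decomposition $(A \times B) \wr \Gamma \cong A[\Gamma] \rtimes (B \wr \Gamma)$, in which $\Gamma \leq B \wr \Gamma$ acts in the usual way on $A[\Gamma]$ and $B[\Gamma]$ acts trivially, I define
\[
\Phi \colon (A \times B) \wr \Gamma \to (A \times B) \wr \Gamma, \qquad (f_A, f_B, \gamma) \mapsto (\bar\varphi(f_A), f_B, \gamma).
\]
A short computation confirms that $\Phi$ is a homomorphism: the only nontrivial identity needed is $\bar\varphi({_\gamma}g_A) = {_\gamma}\bar\varphi(g_A)$, which is precisely the $\Gamma$-equivariance established above. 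Surjectivity is immediate from the fact that $\bar\varphi(A[\Gamma]) = A[\Gamma]$ (again by Proposition \ref{prop:basic:morphism}) and that $\Phi$ is the identity on the $B \wr \Gamma$ factor. On the other hand, $\ker(\Phi) = \ker(\bar\varphi) \times \{0\} \times \{1_\Gamma\}$, which is non-trivial since $\varphi$ was non-injective. Hence $(A \times B) \wr \Gamma$ is non-Hopfian.

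The argument for $B$ is entirely symmetric, using the alternative decomposition $(A \times B) \wr \Gamma \cong B[\Gamma] \rtimes (A \wr \Gamma)$. The only step requiring real input is the reduction to a basic, $\Gamma$-fixing epimorphism; once that is in place via Proposition \ref{prop:basic:morphism}, extending by the identity on the other factor is a routine verification, so I do not expect a genuine obstacle.
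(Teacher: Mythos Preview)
Your proof is correct and follows essentially the same approach as the paper: reduce via Proposition~\ref{prop:basic:morphism} to an epimorphism fixing $\Gamma$, then extend by the identity on the other factor using the decomposition $(A \times B)\wr\Gamma \cong A[\Gamma]\rtimes(B\wr\Gamma)$. The only cosmetic differences are that the paper first disposes of the case where $\Gamma$ is non-Hopfian (via Lemma~\ref{lem:G:Hopfian}) before invoking Proposition~\ref{prop:basic:morphism}, and that it extends the non-injective epimorphism on the $B\wr\Gamma$ factor rather than on the normal $A[\Gamma]$ factor; these are symmetric variants of the same construction.
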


\begin{proof}
If $\Gamma$ is non-Hopfian then none of the groups above is Hopfian by Lemma \ref{lem:G:Hopfian}, so we may assume that $\Gamma$ is Hopfian. Suppose that $B \wr \Gamma$ is non-Hopfian: We will show that $(A \times B) \wr \Gamma$ is non-Hopfian (the other case follows by symmetry). Let $\varphi \colon B \wr \Gamma \to B \wr \Gamma$ be a self-epimorphism that is not injective. Using Proposition \ref{prop:basic:morphism}, we may assume that $\varphi$ is of the form $\varphi(f, \gamma) = (\varphi(f), \gamma)$.

Write $(A \times B) \wr \Gamma \cong A[\Gamma] \rtimes (B \wr \Gamma)$, and define
\[\Phi \colon A[\Gamma] \rtimes (B \wr \Gamma) \to A[\Gamma] \rtimes (B \wr \Gamma) : (f_A, f_B, \gamma) \mapsto (f_A, \varphi(f_B), \gamma).\]
This is a homomorphism when restricted to both $A[\Gamma]$ and $B \wr \Gamma$, and it is easily seen to satisfy the conjugacy relation, therefore it is a homomorphism. It is an epimorphism since $\Phi(A[\Gamma]) = A[\Gamma]$ and $\Phi(B \wr \Gamma) = \varphi(B \wr \Gamma) = B \wr \Gamma$, and it is not injective since $\Phi|_{B \wr \Gamma} = \varphi$, and the latter is not injective.
\end{proof}

Now we move to a partial converse of Lemma \ref{lem:AB:Hopfian}, which needs some additional assumptions.

\begin{lemma}
\label{lem:AxB:Hopfian}

Let $A, B$ be abelian groups. Suppose that $A$ is torsion, and any two torsion elements of $A$ and $B$ have coprime orders. Let $\Gamma$ be an infinite finitely generated group. If $A \wr \Gamma$ and $B \wr \Gamma$ are Hopfian, then $(A \times B) \wr \Gamma$ is Hopfian. In particular, this holds if $A$ is finite, and $B$ is either free abelian, or finite and of order coprime to $A$.
\end{lemma}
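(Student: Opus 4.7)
The plan is to take an arbitrary self-epimorphism $\Phi$ of $(A \times B) \wr \Gamma$ and prove it is injective, by reducing to the two assumed Hopfian wreath products. By Proposition \ref{prop:basic:morphism} we may assume $\Phi$ acts as the identity on $\Gamma$, so it is determined by its restriction to the base group $(A \times B)[\Gamma] = A[\Gamma] \oplus B[\Gamma]$, where it is a group homomorphism commuting with the $\Gamma$-action. Injectivity of $\Phi$ reduces to injectivity of this restriction.

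The key step is to show that $\Phi(A[\Gamma]) \subseteq A[\Gamma]$. Given $f_A \in A[\Gamma]$, its coordinates are torsion elements of $A$, so $f_A$ has finite order $n$ whose prime factors all occur as orders of torsion elements of $A$. The $B[\Gamma]$-component of $\Phi(f_A, 0)$ is then a torsion element annihilated by $n$, but its coordinates are torsion elements of $B$, whose orders involve primes disjoint from those of $n$ by the coprimality hypothesis; this forces that component to vanish. Consequently, since $B$ acts trivially on $A[\Gamma]$, $\Phi$ preserves the normal subgroup $A[\Gamma]$ and descends to a self-epimorphism $\tilde\Phi$ of the quotient $(A \times B) \wr \Gamma / A[\Gamma] \cong B \wr \Gamma$. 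By hypothesis $B \wr \Gamma$ is Hopfian, so $\tilde\Phi$ is an isomorphism; in particular its restriction $\Phi_B$ to $B[\Gamma]$ is an isomorphism.

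Next, I would use the surjectivity of $\Phi$ at elements of the form $(f_A, 0, 1_{\Gamma})$ together with the injectivity of $\Phi_B$ to deduce that the restriction $\Phi_A := \Phi|_{A[\Gamma]}$ is surjective onto $A[\Gamma]$: any preimage $(g_A, g_B, \gamma)$ of $(f_A, 0, 1_{\Gamma})$ must have $\gamma = 1_{\Gamma}$ and $\Phi_B(g_B) = 0$, hence $g_B = 0$ and $\Phi_A(g_A) = f_A$. Then $(f, \gamma) \mapsto (\Phi_A(f), \gamma)$ defines a self-epimorphism of $A \wr \Gamma$, which is Hopfian by hypothesis, so $\Phi_A$ is also injective. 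Combining the injectivity of $\tilde\Phi$ and of $\Phi_A$ via a short diagram chase yields the injectivity of $\Phi$. The concluding special cases follow immediately, since a free abelian $B$ has no non-trivial torsion and a finite $B$ of order coprime to $|A|$ has all torsion orders coprime to those of $A$. The main delicate point is the second step, where the torsion and coprimality hypotheses are essential to pin down the action of $\Phi$ on $A[\Gamma]$; the remaining steps are routine given the two Hopficity assumptions.
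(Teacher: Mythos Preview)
Your proposal is correct and follows essentially the same route as the paper's proof: both normalize via Proposition~\ref{prop:basic:morphism}, use the torsion/coprimality hypothesis to show $\Phi(A[\Gamma])\subseteq A[\Gamma]$, pass to the quotient $B\wr\Gamma$ to invoke its Hopficity, and then restrict to $A[\Gamma]$ to invoke Hopficity of $A\wr\Gamma$. The only cosmetic differences are that the paper argues by contrapositive and packages the quotient/restriction step via Lemma~\ref{lem:epi:semi} applied to the decomposition $A[\Gamma]\rtimes(B\wr\Gamma)$, whereas you do the same diagram chase by hand; also note that your appeal to Proposition~\ref{prop:basic:morphism} implicitly uses that $\Gamma$ is Hopfian, which follows from the hypothesis on $A\wr\Gamma$ via Lemma~\ref{lem:G:Hopfian}.
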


\begin{proof}
Again we may assume that $\Gamma$ is Hopfian by appealing to Lemma \ref{lem:G:Hopfian}. Suppose that $(A \times B) \wr \Gamma$ is non-Hopfian, and $B \wr \Gamma$ is Hopfian: We will show that $A \wr \Gamma$ is non-Hopfian. Let $\varphi \colon (A \times B) \wr \Gamma \to (A \times B) \wr \Gamma$ be a self-epimorphism that is not injective. Using Proposition \ref{prop:basic:morphism}, we may assume that $\varphi(f, \gamma) = (\varphi(f), \gamma)$, and $\varphi|_{(A \times B)[\Gamma]}$ is a self-epimorphism of $(A \times B)[\Gamma]$.

The restiction of $\varphi$ to $A[\Gamma] \to (A \times B)[\Gamma]$ actually has image in $A[\Gamma]$: Indeed, every element of $A[\Gamma]$ is torsion and its order does not divide the order of any element in $B[\Gamma]$. Therefore writing $(A \times B) \wr \Gamma \cong A[\Gamma] \rtimes (B \wr \Gamma)$, we have $\varphi(A[\Gamma]) \leq A[\Gamma]$. Since $B \wr \Gamma$ is Hopfian and $A[\Gamma]$ is abelian, we are in the setting of Lemma \ref{lem:epi:semi}, and thus $\varphi(A[\Gamma]) = A[\Gamma]$ and $\ker(\varphi) \leq A[\Gamma]$. Moreover we may assume that there exists an automorphism $\alpha$ of $B \wr \Gamma$ such that $\varphi(f_A, f_B, \gamma) = (\varphi(f_A), \alpha(f_B, \gamma))$. Applying Proposition \ref{prop:basic:morphism} to $\alpha$, we obtain an automorphism $\beta$ of $\Gamma$ and reduce to the case in which $\varphi$ takes the form $\varphi(f_A, f_B, \gamma) = (\varphi(f_A), \varphi(f_B), \beta(\gamma))$.

Define $\Phi \colon A \wr \Gamma \to A \wr \Gamma : (f, \gamma) \mapsto (\varphi(f_A), \beta(\gamma))$. Then $\Phi$ is a homomorphism, being the restriction of $\varphi$ to $A[\Gamma] \rtimes \Gamma$, it is an epimorphism since $\Phi(A[\Gamma]) = A[\Gamma]$ and $\Phi(\Gamma) = \Gamma$, and it is not injective since $\Phi|_{A[\Gamma]} = \varphi|_{A[\Gamma]}$ and $\ker(\varphi)$ is non-trivial and contained in $A[\Gamma]$. Therefore $A \wr \Gamma$ is not Hopfian and we conclude.
\end{proof}

We deduce the main result of this subsection.

\begin{proof}[Proof of Proposition \ref{prop:decomposition}]

If $\Gamma$ is finite, then $A \wr \Gamma$ is virtually finitely generated abelian, thus residually finite. In particular, $A \wr \Gamma$ is Hopfian for every finitely generated abelian group $A$. Therefore we may assume that $\Gamma$ is infinite, and then the result follows from Lemmata \ref{lem:AB:Hopfian} and \ref{lem:AxB:Hopfian}.
\end{proof}

\section{Stable finiteness}
\label{s:stable}

In this section we discuss direct and stable finiteness of group rings. We will start by recalling some known cases of Kaplansky's conjectures, and then prove Proposition \ref{intro:prop:LE} and its consequences: Corollaries \ref{intro:cor:kaplansky:Fp} and \ref{intro:cor:surjunctive}, and Theorem \ref{intro:thm:A:surj}.

\subsection{Known results}

Let us start by pointing out the following elementary but useful fact:

\begin{lemma}
\label{lem:left:right}
Let $R$ be a unital ring, and suppose that $x \in R$ admits both a left and a right inverse. Then the inverses coincide. In particular, the following are equivalent:
\begin{enumerate}
\item $R$ is directly finite; that is, every element with a left inverse is a unit.
\item For all $x, y \in R$, if $xy = 1_R$ then $yx = 1_R$
\end{enumerate}
\end{lemma}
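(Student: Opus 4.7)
The plan is to first establish the key uniqueness observation and then leverage it to prove the equivalence. Both parts are very short; the only subtlety is keeping straight which side of which element one is inverting.

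First I would prove the auxiliary claim that if $x \in R$ admits both a left inverse $y$ (so $yx = 1_R$) and a right inverse $z$ (so $xz = 1_R$), then $y = z$. This is the standard one-line computation using associativity:
\[
y = y \cdot 1_R = y(xz) = (yx)z = 1_R \cdot z = z.
\]
In particular, once an element has both a one-sided left and a one-sided right inverse, it is automatically a two-sided unit, and the two one-sided inverses are the same element.

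For the equivalence, I would show (1) $\Rightarrow$ (2) as follows. Assume $R$ is directly finite and that $xy = 1_R$. Then $x$ is a left inverse of $y$, so by direct finiteness $y$ is a unit, and in particular admits a right inverse. The uniqueness claim above (applied to $y$ in place of $x$) then forces any such right inverse of $y$ to equal $x$, whence $yx = 1_R$.

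Conversely, for (2) $\Rightarrow$ (1), suppose (2) holds and let $y$ admit a left inverse $x$, meaning $xy = 1_R$. Applying (2) directly gives $yx = 1_R$, so $y$ is a two-sided unit with inverse $x$. This exhausts the content of the lemma. There is no real obstacle here; the only thing to be careful about is that the lemma defines direct finiteness in terms of \emph{left} inverses, so one must apply (2) in the correct direction and use the uniqueness step to swap sides when needed.
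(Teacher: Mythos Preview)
Your proof is correct and follows essentially the same approach as the paper: the core one-line associativity computation is identical (up to a relabeling of variables). The paper's proof in fact only writes out the first sentence (coincidence of one-sided inverses) and leaves the equivalence $(1)\Leftrightarrow(2)$ implicit, whereas you spell it out; this extra detail is fine and changes nothing substantive.
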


\begin{proof}
Suppose that $xy = yz = 1_R$. Then $x = x(yz) = (xy)z = z$.
\end{proof}

This immediately implies the following:

\begin{lemma}
\label{lem:DFsubobject}
If $S$ is a unital subring of $R$ and $R$ is directly finite, then $S$ is directly finite. In particular, if $S$ is a unital subring of $R$, $d' \geq d$ and $\mathbb{M}_{d'}(R)$ is directly finite, then $\mathbb{M}_d(S)$ is directly finite.
\end{lemma}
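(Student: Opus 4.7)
The first sentence is essentially a one-line unwinding of the characterization from Lemma \ref{lem:left:right}. Given $x, y \in S$ with $xy = 1_S$, I would use the assumption that $S$ is a \emph{unital} subring, i.e.\ $1_S = 1_R$, to conclude that $xy = 1_R$ in $R$. Direct finiteness of $R$ then gives $yx = 1_R = 1_S$, so every one-sided unit in $S$ is two-sided, and $S$ is directly finite.

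For the ``In particular'' clause, my plan is to reduce to the first statement by exhibiting $\mathbb{M}_d(S)$ as a unital subring of $\mathbb{M}_{d'}(R)$. The obvious candidate is the entrywise inclusion $\mathbb{M}_d(S) \hookrightarrow \mathbb{M}_d(R)$ composed with the block embedding
\[
\iota : \mathbb{M}_d(R) \to \mathbb{M}_{d'}(R), \qquad M \mapsto \begin{pmatrix} M & 0 \\ 0 & I_{d'-d} \end{pmatrix}.
\]
I would check that $\iota$ is a ring homomorphism — this is a straightforward block-matrix multiplication — and note the key point that $\iota(I_d) = I_{d'}$, so the composition $\mathbb{M}_d(S) \hookrightarrow \mathbb{M}_{d'}(R)$ is indeed a \emph{unital} subring inclusion. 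Applying the first part of the lemma to this inclusion, the direct finiteness of $\mathbb{M}_{d'}(R)$ descends to $\mathbb{M}_d(S)$.

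The only non-routine point is the choice of embedding $\iota$: the naive ``extend by zero'' map is a ring homomorphism but not unital, so it would not let us invoke the first sentence. Padding with the identity block $I_{d'-d}$ is what makes the argument go through, and this is the one subtlety worth flagging explicitly in the write-up.
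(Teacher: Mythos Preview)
Your argument is correct and matches the paper's approach: the paper simply states that the lemma ``immediately implies'' from Lemma~\ref{lem:left:right}, and you have spelled out precisely the details that are left implicit there, including the unital block embedding $M \mapsto \mathrm{diag}(M, I_{d'-d})$.
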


As mentioned in the Introduction, the two conjectures are equivalent. This fact was already known to Passman (see the Mathscinet review \cite{review}), but first appeared in print in \cite{df:dykemajuschenko}:

\begin{theorem}
\label{thm:df:equivalence}
Let $\mathbb{F}$ be a field and let $\Gamma$ be a group. Then $\mathbb{F}[\Gamma]$ is stably finite if and only if $\mathbb{F}[\Gamma \times H]$ is directly finite for every finite group $H$.
Therefore, Kaplansky's direct finiteness conjecture is equivalent to Kaplansky's stable finiteness conjecture.
\end{theorem}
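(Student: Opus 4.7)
The plan is to prove the biconditional by exploiting the identity $\mathbb{F}[\Gamma \times H] \cong \mathbb{F}[\Gamma] \otimes_{\mathbb{F}} \mathbb{F}[H]$ in both directions, together with Lemma \ref{lem:DFsubobject} and a standard corner argument.

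For the $(\Rightarrow)$ direction, assume $\mathbb{F}[\Gamma]$ is stably finite and let $H$ be a finite group. The left regular representation of $H$ on $\mathbb{F}[H]$ extends to a unital injective $\mathbb{F}$-algebra homomorphism $\mathbb{F}[H] \hookrightarrow \End_{\mathbb{F}}(\mathbb{F}[H]) \cong \mathbb{M}_{|H|}(\mathbb{F})$. Tensoring with $\mathbb{F}[\Gamma]$ over $\mathbb{F}$ produces a unital embedding
$$\mathbb{F}[\Gamma \times H] \hookrightarrow \mathbb{F}[\Gamma] \otimes_{\mathbb{F}} \mathbb{M}_{|H|}(\mathbb{F}) \cong \mathbb{M}_{|H|}(\mathbb{F}[\Gamma]),$$
whose codomain is directly finite by stable finiteness. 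Lemma \ref{lem:DFsubobject} then transfers direct finiteness back to $\mathbb{F}[\Gamma \times H]$.

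For the $(\Leftarrow)$ direction, I would show that for every $d \geq 1$ there exists a finite group $H_d$ and an idempotent $e_d \in \mathbb{F}[H_d]$ such that $\mathbb{M}_d(\mathbb{F})$ embeds as a unital subring of the corner $e_d \mathbb{F}[H_d] e_d$. Granting this, $\epsilon_d := 1 \otimes e_d$ is an idempotent of $\mathbb{F}[\Gamma \times H_d]$ whose corner satisfies $\epsilon_d \mathbb{F}[\Gamma \times H_d] \epsilon_d \cong \mathbb{F}[\Gamma] \otimes_{\mathbb{F}} e_d \mathbb{F}[H_d] e_d$ and therefore contains $\mathbb{M}_d(\mathbb{F}[\Gamma])$ as a unital subring. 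A standard corner argument---if $ab = \epsilon_d$ in the corner, then $(a + 1 - \epsilon_d)(b + 1 - \epsilon_d) = 1$ in the ambient ring $\mathbb{F}[\Gamma \times H_d]$, so direct finiteness of the latter (via Lemma \ref{lem:left:right}) forces $(b + 1 - \epsilon_d)(a + 1 - \epsilon_d) = 1$, whence $ba = \epsilon_d$---propagates direct finiteness from $\mathbb{F}[\Gamma \times H_d]$ to its corner, and Lemma \ref{lem:DFsubobject} then transfers it further to $\mathbb{M}_d(\mathbb{F}[\Gamma])$. Hence $\mathbb{F}[\Gamma]$ is stably finite.

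The main obstacle is the construction of $H_d$ uniformly in $\mathbb{F}$. The strategy is to pick $H_d$ with $|H_d|$ coprime to $\mathrm{char}\,\mathbb{F}$, so that Artin--Wedderburn applies to $\mathbb{F}[H_d]$, and such that some Wedderburn factor has the form $\mathbb{M}_n(D)$ with $n \geq d$ and $D$ a division $\mathbb{F}$-algebra; a rank-$d$ idempotent in this factor then produces a corner isomorphic to $\mathbb{M}_d(D)$, which contains $\mathbb{M}_d(\mathbb{F})$ unitally. In characteristics coprime to $(d+1)!$, the symmetric group $H_d = S_{d+1}$ with its standard representation works; in small positive characteristic one can instead use, for instance, an extraspecial $p$-group of order $p^{2k+1}$ with $p$ coprime to $\mathrm{char}\,\mathbb{F}$ and $p^k \geq d$, which admits an absolutely irreducible representation of dimension $p^k$. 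Existence for arbitrary $\mathbb{F}$ and $d$ is classical but requires case analysis on $\mathrm{char}\,\mathbb{F}$. Finally, the ``Therefore'' conclusion is automatic: Kaplansky's SF conjecture trivially implies his DF conjecture (take $d = 1$), while the DF conjecture applied to groups of the form $\Gamma \times H$, combined with the biconditional, yields the SF conjecture.
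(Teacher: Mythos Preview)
The paper does not actually prove this theorem: it is stated as a known result and attributed to Passman and to Dykema--Juschenko \cite{df:dykemajuschenko}, with no argument given in the text. So there is no ``paper's own proof'' to compare against; your proposal stands or falls on its own merits.

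Your $(\Rightarrow)$ direction is correct and standard.

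Your $(\Leftarrow)$ direction is the right architecture, and the corner argument together with Lemma~\ref{lem:DFsubobject} is exactly what is needed once the embedding is in hand. The only point that deserves more care is the existence of $H_d$ and $e_d$ uniformly in $\mathbb{F}$. Your extraspecial-group suggestion works, but one should check that the Schur index is $1$ so that the relevant Wedderburn block really has matrix rank $\geq d$ over its centre (otherwise the block could degenerate to $\mathbb{M}_1(D)$ for a large division algebra $D$, which is useless). For an odd prime $q \neq \operatorname{char}\mathbb{F}$, the faithful $q^k$-dimensional representation of the extraspecial group of order $q^{2k+1}$ is already realised over $\mathbb{F}_p(\zeta_q)$ (by reducing the explicit $\mathbb{Q}(\zeta_q)$-model modulo $p$), hence over $\mathbb{F}(\zeta_q)$, so the Schur index over $\mathbb{F}$ is $1$ and the block is $\mathbb{M}_{q^k}(\mathbb{F}(\zeta_q)\cap \mathbb{F}')$ for a suitable field, with $q^k \geq d$. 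Since there is always an odd prime $q$ different from $\operatorname{char}\mathbb{F}$, this covers all characteristics at once and your $S_{d+1}$ case is in fact unnecessary. With that clarification, the proposal is a complete proof.
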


The most important fact about these conjectures is that they hold in characteristic $0$:

\begin{theorem}[{\cite[p. 122]{kaplansky}}]
\label{thm:df:char0}
Let $\mathbb{F}$ be a field of characteristic $0$ and let $\Gamma$ be any group. Then $\mathbb{F}[\Gamma]$ is stably finite.
\end{theorem}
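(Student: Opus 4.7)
The strategy is Kaplansky's classical trace argument. First I would reduce to $\mathbb{F} = \mathbb{C}$: if $A, B \in \mathbb{M}_d(\mathbb{F}[\Gamma])$ satisfy $AB = I$, they involve only finitely many scalars in $\mathbb{F}$, which generate a finitely generated subfield $\mathbb{F}_0 \subseteq \mathbb{F}$. Since $\mathbb{F}_0$ is a finitely generated extension of $\mathbb{Q}$, it is countable and embeds into $\mathbb{C}$; direct finiteness in $\mathbb{M}_d(\mathbb{C}[\Gamma])$ then passes back to $\mathbb{M}_d(\mathbb{F}_0[\Gamma]) \subseteq \mathbb{M}_d(\mathbb{F}[\Gamma])$ by Lemma \ref{lem:DFsubobject}.

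Next, introduce the canonical trace $\tau : \mathbb{C}[\Gamma] \to \mathbb{C}$ defined by $\tau\bigl( \sum_g a_g g \bigr) = a_{1_\Gamma}$. A direct check on group elements shows $\tau(xy) = \tau(yx)$. Extend to $\mathrm{Tr} : \mathbb{M}_d(\mathbb{C}[\Gamma]) \to \mathbb{C}$ by $\mathrm{Tr}(X) = \sum_{i=1}^d \tau(X_{ii})$; the tracial property lifts entry-wise.

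The heart of the argument is a faithful positivity statement: every idempotent $F \in \mathbb{M}_d(\mathbb{C}[\Gamma])$ satisfies $\mathrm{Tr}(F) \geq 0$, with equality if and only if $F = 0$. Equipping $\mathbb{C}[\Gamma]$ with the involution $x^* := \sum_g \overline{a_g} g^{-1}$, one computes $\tau(x^*x) = \sum_g |a_g|^2$, which is the squared $\ell^2$-norm on $\Gamma$, hence non-negative and faithful; the analogous statement for $\mathrm{Tr}$ on matrices follows componentwise. To deduce the statement for idempotents, I would embed $\mathbb{C}[\Gamma]$ into the group von Neumann algebra, where $\tau$ extends to a faithful normal tracial state and $F$ acts as a bounded idempotent on $\ell^2(\Gamma)^d$. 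Letting $P$ denote the orthogonal projection onto $\mathrm{range}(F)$, one checks $PF = F$ and $FP = P$, and so by cyclicity
\[
\mathrm{Tr}(F) \;=\; \mathrm{Tr}(PF) \;=\; \mathrm{Tr}(FP) \;=\; \mathrm{Tr}(P) \;=\; \mathrm{Tr}(P^*P) \;\geq\; 0,
\]
with equality forcing $P = 0$ and hence $F = 0$.

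With this lemma in hand the conclusion is immediate: given $AB = I$, set $E := BA$ and $F := I - E$. Then $E^2 = B(AB)A = E$, so $F$ is idempotent, and by cyclicity $\mathrm{Tr}(F) = d - \mathrm{Tr}(BA) = d - \mathrm{Tr}(AB) = 0$, whence $F = 0$ and $BA = I$. The main obstacle is the faithful positivity of $\mathrm{Tr}$ on idempotents: the reduction to $\mathbb{C}$, the trace identities, and the final manipulation $E = BA \Rightarrow E^2 = E$ are all elementary, but the positivity step genuinely relies on Hilbert-space input (orthogonal projections on $\ell^2(\Gamma)^d$, or equivalently functional calculus in a $C^*$-algebra), and this is the technical heart of the classical proof.
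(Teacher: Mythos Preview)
The paper does not actually prove this theorem: it is stated with a citation to Kaplansky \cite[p.~122]{kaplansky} and used as a black box, so there is no paper proof against which to compare. Your argument is correct and is precisely the classical Kaplansky trace argument alluded to by that citation: the reduction to $\mathbb{C}$ via embedding a finitely generated characteristic-$0$ field, the tracial functional $\tau$ and its matrix extension $\mathrm{Tr}$, faithful positivity on idempotents via the range projection in the group von Neumann algebra, and the final manipulation $AB=I \Rightarrow (I-BA)^2 = I-BA$ with $\mathrm{Tr}(I-BA)=0$ are all standard and sound. The only point worth stressing for clarity is why the orthogonal projection $P$ onto the range of a bounded idempotent $F$ lies in $\mathbb{M}_d(L(\Gamma))$ (so that cyclicity of the trace applies to $PF$): this is the standard fact that the range projection of an element of a von Neumann algebra belongs to that algebra, and you correctly flag this Hilbert-space input as the non-elementary step.
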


In fact, Kaplansky's original formulation of the conjecture \cite[p. 123]{kaplansky} only concerns stable finiteness over fields of positive characteristic. Let us record a corollary that will be useful for our purposes:

\begin{corollary}
\label{cor:df:Z}
Let $\Gamma$ be a group. Then $\mathbb{Z}[\Gamma]$ is stably finite.
\end{corollary}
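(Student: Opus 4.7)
The plan is very short: this should follow immediately by embedding $\mathbb{Z}[\Gamma]$ into $\mathbb{Q}[\Gamma]$ and combining Theorem \ref{thm:df:char0} with Lemma \ref{lem:DFsubobject}. Concretely, the inclusion $\mathbb{Z} \hookrightarrow \mathbb{Q}$ induces a unital ring embedding $\mathbb{Z}[\Gamma] \hookrightarrow \mathbb{Q}[\Gamma]$, and entrywise this yields a unital embedding $\mathbb{M}_d(\mathbb{Z}[\Gamma]) \hookrightarrow \mathbb{M}_d(\mathbb{Q}[\Gamma])$ for every $d \geq 1$.

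By Theorem \ref{thm:df:char0}, $\mathbb{Q}[\Gamma]$ is stably finite, so $\mathbb{M}_d(\mathbb{Q}[\Gamma])$ is directly finite. Applying Lemma \ref{lem:DFsubobject} to the unital subring $\mathbb{M}_d(\mathbb{Z}[\Gamma])$ then gives that $\mathbb{M}_d(\mathbb{Z}[\Gamma])$ is directly finite. Since this holds for every $d$, the ring $\mathbb{Z}[\Gamma]$ is stably finite.

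There is no real obstacle: the only points to check are that passing to matrix rings preserves the unital subring relation and that $\mathbb{Z}[\Gamma] \to \mathbb{Q}[\Gamma]$ is genuinely unital, both of which are immediate. The content is entirely in the cited Theorem \ref{thm:df:char0}; the role of the corollary is just to record the integer-coefficient version, which will be the form actually needed in the applications.
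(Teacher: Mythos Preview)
Your proof is correct and follows exactly the same approach as the paper: embed $\mathbb{Z}[\Gamma]$ into $\mathbb{Q}[\Gamma]$ and combine Theorem \ref{thm:df:char0} with Lemma \ref{lem:DFsubobject}. The only difference is that you spell out the passage to matrix rings explicitly, whereas the paper leaves this implicit in its one-line proof.
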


\begin{proof}
This follows form Theorem \ref{thm:df:char0} and Lemma \ref{lem:DFsubobject}, by embedding $\mathbb{Z}[\Gamma]$ as a subring of $\mathbb{Q}[\Gamma]$.
\end{proof}

Now let us mention some groups that are known to satisfy these conjectures. The main example is that of \emph{sofic groups}:

\begin{definition}\label{defi:sofic}
Let $\Gamma$ be a group. Let $0 < \varepsilon < 1$, $n \geq 1$ and let $F \subset \Gamma$ be a finite subset of $\Gamma$. A map $\varphi \colon F \to S_n$ is called an \emph{$(F, \varepsilon)$-approximation} if the following two conditions hold:
\begin{enumerate}
    \item For all $g, h \in F$, if $gh \in F$, then it holds $d_H(\varphi(gh), \varphi(g)\varphi(h)) \leq \varepsilon$;
    \item For all $1_\Gamma \neq g \in F$ it holds $d_H(\varphi(g), \id) \geq (1 - \varepsilon)$.
\end{enumerate}
Here $d_H$ denotes the \emph{Hamming distance} on $S_n$, that is $d_H(\sigma, \tau) = \frac{1}{n} \# \{ i : \sigma(i) \neq \tau(i) \}$.

The group $\Gamma$ is said to be \emph{sofic} if for every $0 < \varepsilon < 1$ and every finite subset $F \subset \Gamma$ there exists an $(F, \varepsilon)$-approximation into some $S_n$.
\end{definition}

The class of sofic groups was introduced by Gromov \cite{gromov} and Weiss \cite{weiss} as a large class of groups for which Gottschalk's surjunctivity conjecture could be proven. We will discuss this more in Subsection \ref{s:additive}. The main examples of sofic groups are amenable and residually finite groups, and several constructions (subgroups, directed unions, marked limits, extensions by amenable groups) preserve soficity: We refer the reader to \cite{cell} for more detail. To this day, there is no known example of a non-sofic group.

\begin{theorem}[\cite{df:elekszabo}]
\label{thm:sofic}

Sofic groups satisfy Kaplansky's stable finiteness conjecture.
\end{theorem}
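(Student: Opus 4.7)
My plan is to combine the reductions already available in this section with a direct sofic approximation argument. By Theorem \ref{thm:df:char0} the characteristic zero case is known, while Corollary \ref{intro:cor:kaplansky:Fp} reduces the positive characteristic case to stable finiteness over $\mathbb{F}_p$. Theorem \ref{thm:df:equivalence}, combined with the standard fact that the class of sofic groups is closed under direct products with finite groups (being closed under extensions by amenable groups), further reduces the problem to the following statement: if $\Gamma$ is sofic and $xy = 1$ in $\mathbb{F}_p[\Gamma]$, then $yx = 1$. I will argue by contradiction, assuming $e := 1 - yx \neq 0$, which gives $xe = x - xyx = 0$.

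I will then fix a finite symmetric $F \subseteq \Gamma$ containing $1_\Gamma$ and the supports of $x$ and $y$, and a small $\varepsilon > 0$. Soficity supplies, for all sufficiently large $n$, an $(F \cdot F, \varepsilon)$-approximation $\varphi : F \cdot F \to S_n$, which I may take with $\varphi(1_\Gamma) = \mathrm{id}$. Extending linearly by $g \mapsto P_{\varphi(g)} \in \mathbb{M}_n(\mathbb{F}_p)$ yields a map $\tilde{\varphi}$ on the $\mathbb{F}_p$-span of $F \cdot F$. Measured in the rank pseudometric $d_n(A, B) := \mathrm{rank}(A - B)/n$, the two key properties I will establish are: (i) approximate multiplicativity $d_n(\tilde{\varphi}(ab), \tilde{\varphi}(a) \tilde{\varphi}(b)) \leq C_{a,b} \varepsilon$, a direct consequence of the Hamming-distance defect $d_H(\varphi(gh), \varphi(g)\varphi(h)) \leq \varepsilon$; and (ii) non-degeneracy, namely that for each fixed nonzero $a$ in the domain there exists $c_a > 0$ with $\mathrm{rank}(\tilde{\varphi}(a)) \geq c_a n$ whenever $\varepsilon$ is small enough.

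With (i) and (ii) in hand, the contradiction is extracted as follows. From $xy = 1$ and (i) I get $\tilde{\varphi}(x) \tilde{\varphi}(y) = I_n + E_1$ with $\mathrm{rank}(E_1) = O(\varepsilon n)$, so both $\tilde{\varphi}(x)$ and $\tilde{\varphi}(y)$ have rank $\geq n - O(\varepsilon n)$. For each $v \in \ker E_1$ one has $\tilde{\varphi}(x) \tilde{\varphi}(y) v = v$; applying $\tilde{\varphi}(y)$ on the left shows that $\tilde{\varphi}(y) \tilde{\varphi}(x)$ is the identity on $\tilde{\varphi}(y)(\ker E_1)$, a subspace of dimension $\geq n - O(\varepsilon n)$ by the rank bound on $\tilde{\varphi}(y)$. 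Hence $\tilde{\varphi}(y) \tilde{\varphi}(x) = I_n + E_2$ with $\mathrm{rank}(E_2) = O(\varepsilon n)$, and applying (i) once more gives $\mathrm{rank}(\tilde{\varphi}(e)) = \mathrm{rank}(I_n - \tilde{\varphi}(yx)) = O(\varepsilon n)$, contradicting (ii) as soon as $\varepsilon$ is small enough.

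The hard part will be the non-degeneracy lemma (ii). Translating $a$ by an element of its support reduces to the case where the coefficient $c_1$ of $1_\Gamma$ in $a$ is nonzero, and the task becomes to lower-bound the rank of $c_1 I_n + \sum_{g \neq 1_\Gamma} c_g P_{\varphi(g)}$ by a positive proportion of $n$. The few-fixed-points property of each $P_{\varphi(g)}$ ensures that $c_1$ appears on almost every diagonal entry, but passing from this pointwise nonvanishing to a genuine rank bound requires a careful combinatorial extraction of a large submatrix. Moreover, the constant $c_a$ must depend on $a$ and not only on the size of $F$: already for $a = 1 + g + \cdots + g^{p - 1}$ with $g \in \Gamma$ of order $p$, the rank of $\tilde{\varphi}(a)$ can be as small as $n/p$.
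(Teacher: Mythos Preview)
Your plan is sound and does lead to a correct proof, but it takes a genuinely different route from the paper. In the paper, Theorem~\ref{thm:sofic} is merely cited; the paper's own reproof appears later as Corollary~\ref{cor:surjunctive} and is purely conceptual: sofic implies surjunctive by the Gromov--Weiss theorem, surjunctive groups satisfy direct finiteness over finite fields essentially by definition (an element of $\mathbb{M}_d(\mathbb{F}_p[\Gamma])$ with a one-sided inverse induces an injective cellular automaton on $(\mathbb{F}_p^d)^\Gamma$, which by surjunctivity is surjective, hence a two-sided inverse exists), and then Corollary~\ref{cor:kaplansky:Fp} together with Theorem~\ref{thm:df:equivalence} upgrades this to stable finiteness over all fields. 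Your approach is instead the metric-approximation one in the spirit of \cite{linearsofic}: it avoids the Gromov--Weiss black box and the cellular-automata detour, at the price of having to prove the non-degeneracy lemma~(ii) by hand.

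Your identification of~(ii) as the crux is correct, and your sketch can be completed by an independent-set argument rather than a ``submatrix extraction'' in the vague sense. After translating so that $1_\Gamma \in \mathrm{supp}(a)$ with coefficient $c_1 \neq 0$, form the graph on $\{1,\ldots,n\}$ with an edge $i \sim \varphi(g)(i)$ for each $g \in \mathrm{supp}(a)\setminus\{1_\Gamma\}$; it has maximum degree at most $2(k-1)$ where $k=|\mathrm{supp}(a)|$, so a greedy colouring yields an independent set $C$ of size at least $n/(2k-1)$. After discarding the $O(\varepsilon n)$ indices fixed by some $\varphi(g)$ with $g\neq 1_\Gamma$, the $C\times C$ submatrix of $\tilde\varphi(a)$ is exactly $c_1 I$, so $\mathrm{rank}(\tilde\varphi(a))\geq n/(2k-1)-O(\varepsilon n)$. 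One small correction: since you ultimately apply~(ii) to $e=1-yx$, whose support lies in $F^2$, the translation step (or the equivalent comparison of $\varphi(g)$ with $\varphi(g_0)$ for $g,g_0\in\mathrm{supp}(e)$) requires the sofic approximation to be taken on $F^4$ rather than $F^2$.
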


Theorem \ref{thm:sofic} was proven again by different methods in \cite{df:ceccherinicoornaert} and then in \cite{linearsofic}. 
In Subsection \ref{s:additive}, we will provide yet another proof. Let us however mention that we will only be dealing with group rings over fields, but the stable finiteness for group rings of sofic groups holds more generally over division rings \cite{df:elekszabo}, and in fact over Noetherian rings \cite{df:noetherian}.

\medskip

Another relevant class of examples is that of groups with the \emph{unique product property}, or more succintly \emph{UPP groups}. We say that $\Gamma$ is a UPP group if for every pair of nonempty finite subsets $S, T \subset \Gamma$ there exists an element $g \in \Gamma$ that can be \emph{uniquely} expressed as the product of an element of $S$ and an element of $T$. Relevant examples of UPP groups are \emph{left orderable groups}, that is, groups admitting a total order that is invariant by left translation. It can be shown that for every field $\mathbb{F}$ and every UPP group $\Gamma$, the group ring $\mathbb{F}[\Gamma]$ is directly finite (the proof is the same as the fact that UPP groups have no nontrivial units, see e.g. \cite[Chapter 13]{passman}). However, we do not know whether UPP groups, or even left orderable groups, satisfy Kaplansky's stable finiteness conjecture. Theorem \ref{thm:df:equivalence} does not help in this case, since no group with torsion is UPP.

\begin{question}
Do UPP groups satisfy Kaplansky's stable finiteness conjecture? Do left orderable groups?
\end{question}

For left orderable groups, the following special case is known:

\begin{proposition}
\label{prop:biorderable}
    Bi-orderable groups satisfy Kaplansky's stable finiteness conjecture.
\end{proposition}

A group is \emph{bi-orderable} if it admits a total order that is invariant under both left and right translation.

\begin{proof}
    This follows from the fact that if $\Gamma$ is bi-orderable and $\mathbb{F}$ is a field, then $\mathbb{F}[\Gamma]$ embeds into a division ring \cite{skew1, skew2}.
\end{proof}

An intermediate notion between bi-orderability and left orderability is \emph{local indicability}: A group is locally indicable if every finitely generated subgroup surjects onto $\mathbb{Z}$, and this is equivalent to the existence of a left invariant order satisfying a weak version of bi-invariance \cite{conrad, brodskii}. Group rings of locally indicable groups embed into division rings in characteristic $0$ \cite{jaikin}, but to our knowledge this is still open in positive characteristic, which is the case of relevance for the stable finiteness conjecture.

\medskip

Using these known results, we can deduce Corollary \ref{intro:cor:solution} from the Introduction, assuming Theorem \ref{intro:thm:main}.

\begin{corollary}
\label{cor:solution}
    Let $\Gamma$ be a finitely generated group, and let $A$ be a finitely generated abelian group. Suppose that one of the following holds:
    \begin{enumerate}
        \item $A$ is torsion-free;
        \item $\Gamma$ is sofic;
        \item $\Gamma$ is bi-orderable;
        \item $A$ is cyclic and $\Gamma$ has the unique product property.
    \end{enumerate}
    Then $A \wr \Gamma$ is Hopfian if and only if $\Gamma$ is Hopfian.
\end{corollary}

\begin{proof}
    Let $A \coloneqq A_0 \oplus \bigoplus A_p$, where $A_0$ is free abelian, and $A_p$ is a $p$-group which decomposes further as $A_p \coloneqq (\mathbb{Z}/p\mathbb{Z})^{d^p_1} \oplus \cdots \oplus (\mathbb{Z}/p^m\mathbb{Z})^{d^p_m}$, where $d^p_i \in \mathbb{N}$. By Theorem \ref{intro:thm:main}, it suffices to show that, under each of the four assumptions above, the ring $\mathbb{M}_{\max_i(d^p_i)}(\mathbb{F}_p[\Gamma])$ is directly finite.

    When $A$ is torsion-free, there is nothing to check. When $\Gamma$ is sofic, this follows from Theorem \ref{thm:sofic}, and when $\Gamma$ is bi-orderable, this follows from Proposition \ref{prop:biorderable}. When $A$ is cyclic, each $d_i^p = 1$, and so this amounts to direct finiteness of $\mathbb{F}_p[\Gamma]$, which holds for UPP groups \cite[Chapter 13]{passman}.
\end{proof}

\subsection{Local embeddings}

The notion of a local embedding goes back to the work of Mal'cev \cite{malcev} although it is commonly attributed to \cite{LEF}. Let us recall the definition for rings:

\begin{definition}
\label{defi:LE}
Let $R$ be a ring and $\mathcal{S}$ a class of rings. We say that $R$ is \emph{locally embeddable into $\mathcal{S}$} if for every finite set $F \subset R$ there exists a map $f \colon F \to S \in \mathcal{S}$ such that:

\begin{enumerate}   

\item For all $x, y \in F$, if $x + y \in F$, then it holds $f(x + y) = f(x) + f(y)$;
\item For all $x, y \in F$, if $xy \in F$, then it holds $f(xy) = f(x)f(y)$;
\item $f|_F$ is injective.

\end{enumerate}

If $R$ has an identity, we also require that each $S \in \mathcal{S}$ has an identity, and $f(1_R) = 1_S$ (if $1_R \in F)$.
Such a map $f$ is called a \emph{local embedding}
of $F$ into $S$.
\end{definition}

This is an exact version of the notion of approximation we used to introduce sofic groups.

\begin{example} \label{ex:SubringLE}
If $f \colon R \rightarrow S$ is an injective
homomorphism of rings,
then the restriction of $f$ to any finite subset
of $R$ is a local embedding.
Thus $R$ locally embeds into the class
$\lbrace S \rbrace$.
\end{example}

\begin{remark}
Let $R$ be a unital ring and $\mathcal{S},\mathcal{T}$
be classes of unital rings.
If $R$ locally embeds into $\mathcal{S}$ and
every $S \in \mathcal{S}$ locally embeds into $\mathcal{T}$,
then $R$ locally embeds into $\mathcal{T}$.
In particular, if $f \colon R \rightarrow S$
is an injective homomorphism of rings and
$S$ is locally embeddable into $\mathcal{T}$,
then taking $\mathcal{S} = \lbrace S \rbrace$
in the above and using Example \ref{ex:SubringLE},
we have that local embeddability into
a class $\mathcal{T}$ is preserved under taking
subrings.
\end{remark}

The main result of this subsection is Proposition \ref{intro:prop:LE} from the Introduction, which we recall for the reader's convenience:

\begin{proposition}
\label{prop:LE}
Let $\mathbb{F}$ be a field of characteristic $p > 0$. Then $\mathbb{F}$ is locally embeddable into finite fields of characteristic $p$. In particular, $\mathbb{F}$ is locally embeddable into matrix algebras over $\mathbb{F}_p$, namely $\{ \mathbb{M}_d(\mathbb{F}_p) : d \geq 1 \}$.
\end{proposition}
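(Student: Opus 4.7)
The plan is to reduce to a finitely generated $\mathbb{F}_p$-subalgebra of $\mathbb{F}$ and then specialize at a maximal ideal, using the Nullstellensatz to guarantee that the residue field is finite.

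Concretely, fix a finite subset $F = \{x_1, \dots, x_k\} \subset \mathbb{F}$. First I would let $R_0 \subseteq \mathbb{F}$ be the $\mathbb{F}_p$-subalgebra generated by $F$; this is a finitely generated $\mathbb{F}_p$-algebra and, being a subring of $\mathbb{F}$, an integral domain. To ensure injectivity on $F$ after passing to a quotient, I then form the localisation
\[
R \;=\; R_0\bigl[d^{-1}\bigr], \qquad d \;=\; \prod_{i \neq j} (x_i - x_j) \;\in\; R_0 \setminus \{0\}.
\]
Since a localisation at a single element is again finitely generated, $R$ remains a finitely generated $\mathbb{F}_p$-algebra, still embedded in $\mathbb{F}$ and containing $F$.

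Next I would invoke the Nullstellensatz in the form ``every finitely generated algebra over a field is a Jacobson ring, and the residue field at any maximal ideal is a finite extension of the base field''. Pick any maximal ideal $\mathfrak{m} \subset R$ and let $\varphi : R \to R/\mathfrak{m} =: \mathbb{K}$ be the quotient map. Then $\mathbb{K}$ is a finite extension of $\mathbb{F}_p$, hence a finite field of characteristic $p$. Because $\varphi$ is a unital ring homomorphism, conditions (1) and (2) of Definition \ref{defi:LE} are automatic. For condition (3), note that each difference $x_i - x_j$ with $i \neq j$ is a unit in $R$ by construction, so its image in $\mathbb{K}$ is nonzero; hence $\varphi$ is injective on $F$. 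This produces the desired local embedding into a finite field of characteristic $p$.

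For the ``in particular'' clause, the finite field $\mathbb{F}_{p^d}$ (where $d = [\mathbb{K} : \mathbb{F}_p]$) embeds as a unital $\mathbb{F}_p$-algebra into $\mathbb{M}_d(\mathbb{F}_p)$ via its left regular representation on itself as an $\mathbb{F}_p$-vector space. Composing this injective homomorphism with the local embedding $\varphi$ (and using the transitivity remark following Definition \ref{defi:LE}) yields a local embedding of $\mathbb{F}$ into $\mathbb{M}_d(\mathbb{F}_p)$, completing the proof. I do not anticipate a real obstacle here: the only non-formal input is the Nullstellensatz, and the localisation trick is precisely what turns a generic ring homomorphism into one that is injective on the prescribed finite subset.
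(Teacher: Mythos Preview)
Your argument is correct. The key steps---forming the finitely generated $\mathbb{F}_p$-algebra generated by $F$, inverting the pairwise differences, and then applying Zariski's Lemma (the Nullstellensatz) to conclude that any residue field at a maximal ideal is a finite extension of $\mathbb{F}_p$---all go through without issue, and the localisation trick cleanly enforces injectivity on $F$.

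Your route is genuinely different from the paper's. The paper argues entirely by elementary field theory: it reduces to showing that any field of characteristic $p$ locally embeds into $\overline{\mathbb{F}_p}$, and then proceeds by induction on the number of generators of a finitely generated subfield, handling simple algebraic extensions (Lemma~\ref{lem:AlgLE}) and simple transcendental extensions (Lemmata~\ref{lem:TranscLE} and~\ref{lem:TranscAlgLE}) separately with explicit constructions of local embeddings. Your approach is considerably shorter and more conceptual, at the cost of invoking the Nullstellensatz as a black box; the paper's approach is longer but completely self-contained. The authors are in fact aware of the Nullstellensatz alternative---see the paragraph in the Introduction following Corollary~\ref{intro:cor:kaplansky:Fp}, where they attribute the observation to Gardam and cite \cite{malman}---though they frame it as a route to the Kaplansky reductions rather than as a proof of Proposition~\ref{prop:LE} itself. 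Your argument shows that the Nullstellensatz in fact yields the local-embedding statement directly.
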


Before moving on to the proof, let us record the following corollary (Corollary \ref{intro:cor:kaplansky:Fp} from the Introduction):

\begin{corollary}
\label{cor:kaplansky:Fp}
Let $\Gamma$ be a group; let $p$ be a prime
and let $\mathbb{F}$ be a field of characteristic $p$.
Then $\mathbb{F}[\Gamma]$ is stably finite if and only if
$\mathbb{F}_p [\Gamma]$ is stably finite.
Thus Kaplansky's stable finiteness conjecture
over $\mathbb{F}$ implies Kaplansky's stable finiteness conjecture over all fields of characteristic $p$.
\end{corollary}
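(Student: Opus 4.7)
The plan is to prove the equivalence ``$\mathbb{F}[\Gamma]$ is stably finite $\iff$ $\mathbb{F}_p[\Gamma]$ is stably finite'' for each fixed $\Gamma$, from which the statement about the conjecture follows by chaining both directions. The forward implication is immediate: any field of characteristic $p$ contains $\mathbb{F}_p$ as its prime subfield, so $\mathbb{F}_p[\Gamma]$ is a unital subring of $\mathbb{F}[\Gamma]$, and stable finiteness descends to unital subrings by Lemma \ref{lem:DFsubobject}.

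For the converse I would argue by contrapositive. Suppose $\mathbb{F}[\Gamma]$ is not stably finite; fix $d \geq 1$ and matrices $X, Y \in \mathbb{M}_d(\mathbb{F}[\Gamma])$ with $XY = I_d$ but $YX \neq I_d$. Let $S \subseteq \Gamma$ be a finite subset containing the support of every entry of $X$ and $Y$, and let $K_0 \subset \mathbb{F}$ be the finite set consisting of $0$, $1$, and all coefficients $[X_{ij}]_g, [Y_{ij}]_g$ for $g \in S$. Expanding the products $XY$ and $YX$ in the group ring shows that the $g$-coefficient of each entry is built from $K_0$ by finitely many additions and multiplications; enlarge $K_0$ to a finite set $K$ that also contains every one of these products and every partial sum arising in the computation. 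This is possible because only finitely many such intermediate values appear.

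Now apply Proposition \ref{prop:LE} to obtain some $m \geq 1$ and a local embedding $f : K \to \mathbb{M}_m(\mathbb{F}_p)$. Applying $f$ coefficient-wise to every entry of $X$ and $Y$ produces matrices $\tilde X, \tilde Y$ with entries in $\mathbb{M}_m(\mathbb{F}_p)[\Gamma] \cong \mathbb{M}_m(\mathbb{F}_p[\Gamma])$, and hence $\tilde X, \tilde Y \in \mathbb{M}_d(\mathbb{M}_m(\mathbb{F}_p[\Gamma])) \cong \mathbb{M}_{dm}(\mathbb{F}_p[\Gamma])$. Because $K$ contains every sum and product that arises in the expansion of the entries of $XY$ and $YX$, the local embedding axioms force $f$ to commute with each of these operations; together with $f(0) = 0$ and $f(1) = I_m$ this gives $\tilde X \tilde Y = I_{dm}$. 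Meanwhile, injectivity of $f$ on $K$, applied to a nonzero coefficient of $YX - I_d$, yields $\tilde Y \tilde X \neq I_{dm}$. Thus $\mathbb{M}_{dm}(\mathbb{F}_p[\Gamma])$ is not directly finite, so $\mathbb{F}_p[\Gamma]$ is not stably finite, contrary to assumption.

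The main source of difficulty is the bookkeeping in the second paragraph: $K$ must be chosen large enough that the local embedding genuinely transports both the identity $XY = I_d$ and the inequality $YX \neq I_d$ at once, which is the whole reason the notion of local embedding (rather than a literal embedding) is the right tool. Once this finite closure is in hand, the verification of the third paragraph is just an unwinding of the definitions. The final statement about Kaplansky's conjecture then follows by combining the forward direction applied to $\mathbb{F}$ with the converse applied to any other field $\mathbb{F}'$ of characteristic $p$, routed through the common target $\mathbb{F}_p[\Gamma]$.
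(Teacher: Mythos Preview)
Your argument is correct. Both you and the paper prove the forward direction the same way (via Lemma \ref{lem:DFsubobject}) and both use Proposition \ref{prop:LE} for the converse, but the routes differ. The paper first invokes Theorem \ref{thm:df:equivalence} to trade the failure of stable finiteness of $\mathbb{F}[\Gamma]$ for the failure of \emph{direct} finiteness of $\mathbb{F}[\Gamma\times H]$ for some finite $H$; it then applies the local embedding to single group-ring elements $x,y$, lands in $\mathbb{M}_d(\mathbb{F}_p)[\Gamma\times H]\cong\mathbb{M}_d(\mathbb{F}_p[\Gamma\times H])$, and finally appeals to Theorem \ref{thm:df:equivalence} twice more to return to $\mathbb{F}_p[\Gamma]$. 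You instead stay with the matrices $X,Y\in\mathbb{M}_d(\mathbb{F}[\Gamma])$ throughout, push them through the local embedding coefficient-wise, and use the identification $\mathbb{M}_d(\mathbb{M}_m(\mathbb{F}_p)[\Gamma])\cong\mathbb{M}_{dm}(\mathbb{F}_p[\Gamma])$ to conclude directly that $\mathbb{F}_p[\Gamma]$ fails stable finiteness. Your route is more self-contained, since it never needs the Dykema--Juschenko equivalence of direct and stable finiteness; the paper's route keeps the local-embedding bookkeeping lighter by working with scalars rather than $d\times d$ matrices. Either way, the essential content---that a one-sided inverse relation survives transport along a local embedding into $\mathbb{M}_m(\mathbb{F}_p)$---is the same, and your care in closing $K$ under the relevant products and partial sums is exactly what is needed (indeed the paper is somewhat terse on this point).
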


\begin{proof}
For the first statement,
if $\mathbb{F}[\Gamma]$ is stably finite,
then the subring
$\mathbb{F}_p[\Gamma]$ is stably finite
by Lemma \ref{lem:DFsubobject}.
Conversely suppose that $\mathbb{F}[\Gamma]$ is not stably finite.
Then by Theorem \ref{thm:df:equivalence} there
exists a finite group $H$ such that
$\mathbb{F}[\Gamma \times H]$ is not directly finite.
Write $\Delta = \Gamma \times H$, so that
there exist $x, y \in \mathbb{F}[\Delta]$ such that $xy = 1_{\mathbb{F}[\Delta]}$ but $yx \neq 1_{\mathbb{F}[\Delta]}$. 
Let $E \subset \mathbb{F}$ be the union of all coefficients of $x, y, xy$ and $yx$ 
(a finite set containing $0$ and $1$). 
Let $n \in \mathbb{N}$ be such that there exists a subset of $\Delta$ of size $n$ 
spanning both $x$ and $y$. 
Let:
\[F \coloneqq \left\{ \sum_{i=1} ^n \lambda_i \mu_i : \lambda_i , \mu_i \in E \right\} \subseteq \mathbb{F}\]
and let $f \colon F \to \mathbb{M}_d(\mathbb{F}_p)$ be a local embedding: The latter exists by Proposition \ref{prop:LE}. We extend $f$ to a map $F[\Delta] \to \mathbb{M}_d(\mathbb{F}_p)[\Delta]$ - where $F[\Delta]$ denotes the subset of elements of  $\mathbb{F}[\Delta]$ such that all images belong to $F$ - as follows:
$$f \left( \sum\limits_{\delta \in \Delta} z \cdot \delta \right) = \sum\limits_{\delta \in \Delta} f(z) \cdot \delta.$$
Then by construction of $F$ and the local embedding property, 
$f(x)f(y) = f(xy) = f(1_{\mathbb{F}[\Gamma]}) = I_d$, while $f(y) f(x) = f(yx) \neq I_d$ by injectivity. Therefore $\mathbb{M}_d(\mathbb{F}_p)[\Delta]$ is not directly finite. 
But $\mathbb{M}_d(\mathbb{F}_p)[\Delta]$ is naturally isomorphic to $\mathbb{M}_d(\mathbb{F}_p[\Delta])$, so $\mathbb{F}_p[\Delta]$ is not stably finite. By Theorem \ref{thm:df:equivalence},
there exists a finite group $K$ such that
$\mathbb{F}_p [\Delta \times K]$ is not directly finite.
Since $\Delta \times K \cong \Gamma \times (H \times K)$
and $H \times K$ is a finite group,
one more application of Theorem \ref{thm:df:equivalence}
yields that $\mathbb{F}_p [\Gamma]$
is not stably finite.
\end{proof}

As a consequence, we have the following neat reformulation of the stable finiteness conjecture. 

\begin{corollary}
Kaplansky's stable finiteness conjecture holds iff for every group $\Gamma$ and every prime $p$, 
$\mathbb{F}_p[\Gamma]$ is directly finite. 
\end{corollary}

\begin{proof}
One direction is clear. Converesly, for fixed $p$, 
direct finiteness of $\mathbb{F}_p[\Gamma]$ for every 
group $\Gamma$ implies stable finiteness of $\mathbb{F}_p[\Gamma]$ for every 
group $\Gamma$, by Theorem \ref{thm:df:equivalence}. 
Corollary \ref{cor:kaplansky:Fp} then implies stable finiteness of $\mathbb{F}[\Gamma]$ 
for every field $\mathbb{F}$ of characteristic $p$ and every group $\Gamma$. 
Finally, the characteristic zero case follows from Theorem \ref{thm:df:char0}. 
\end{proof}

\begin{remark}
The first statement of Proposition \ref{prop:LE} allows to show, with a similar argument, that the zero-divisor and idempotent conjectures over field of characteristic $p$ reduce to the case of finite fields of characteristic $p$. See \cite[Section 3.4.3]{malman} for an argument covering the zero-divisor conjecture. But to reduce to $\mathbb{F}_p$ specifically, we crucially exploit the flexibility of changing the degree of the matrix algebras, and this is allowed by the stable finiteness conjecture only.
\end{remark}

We now turn to the proof of Proposition \ref{prop:LE}. 
First, we explain how the second statement in Proposition \ref{prop:LE} follows from the first.
Recall that if $A$ is a finite-dimensional
algebra over a field $\mathbb{K}$,
then $A$ naturally acts on itself
faithfully by linear transformations,
so that a choice of $\mathbb{K}$-basis for $A$
induces an embedding of $A$ as a subring of
$\mathbb{M}_d(\mathbb{K})$,
where $d=\dim_{\mathbb{K}}(A)$.
In particular we have the following:

\begin{lemma} \label{lem:FFMatEmb}
If $\mathbb{F}$ is a finite field of characteristic $p$,
then $\mathbb{F}$ is isomorphic to a subring of
$\mathbb{M}_{d} (\mathbb{F}_p)$ for some $d \geq 1$.
\end{lemma}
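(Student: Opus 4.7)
The plan is to apply the remark made just before the lemma statement, namely that any finite-dimensional algebra $A$ over a field $\mathbb{K}$ embeds as a subring of $\mathbb{M}_{d}(\mathbb{K})$ with $d = \dim_{\mathbb{K}}(A)$, via the left regular representation. So the goal reduces to verifying that a finite field $\mathbb{F}$ of characteristic $p$ is a finite-dimensional $\mathbb{F}_p$-algebra.

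First I would observe that the prime subfield of $\mathbb{F}$ is $\mathbb{F}_p$, so $\mathbb{F}$ is a vector space over $\mathbb{F}_p$. Since $\mathbb{F}$ is finite, this vector space has finite dimension $d := [\mathbb{F} : \mathbb{F}_p]$, and the multiplication on $\mathbb{F}$ is $\mathbb{F}_p$-bilinear, so $\mathbb{F}$ is an $\mathbb{F}_p$-algebra of dimension $d$.

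Now I would invoke the left regular representation $\lambda : \mathbb{F} \to \End_{\mathbb{F}_p}(\mathbb{F})$ defined by $\lambda(a)(b) = ab$. Fixing an $\mathbb{F}_p$-basis of $\mathbb{F}$ identifies $\End_{\mathbb{F}_p}(\mathbb{F})$ with $\mathbb{M}_d(\mathbb{F}_p)$, so $\lambda$ becomes a unital ring homomorphism $\mathbb{F} \to \mathbb{M}_d(\mathbb{F}_p)$. Injectivity is immediate: if $\lambda(a) = 0$, then $a = a \cdot 1 = \lambda(a)(1) = 0$. (Alternatively, $\mathbb{F}$ is a field, so any nonzero ring homomorphism out of it is injective.) This realises $\mathbb{F}$ as a subring of $\mathbb{M}_d(\mathbb{F}_p)$, as required.

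There is no real obstacle here: the lemma is essentially a restatement of the general regular-representation fact highlighted in the paragraph preceding it, specialised to $\mathbb{K} = \mathbb{F}_p$ and $A = \mathbb{F}$. The only content is checking that $\mathbb{F}$ is finite-dimensional over $\mathbb{F}_p$, which is automatic from finiteness of $\mathbb{F}$.
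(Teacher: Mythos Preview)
Your proposal is correct and matches the paper's approach exactly: the paper does not give a separate proof of this lemma, but simply states it as an immediate special case of the regular-representation embedding recalled in the preceding paragraph, which is precisely what you spell out.
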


The key to the proof of Proposition \ref{prop:LE} is
\emph{Zariski's Lemma} (a close cousin of Hilbert's Nullstellensatz,
see \cite{AtiyahMcD} Proposition 7.9).

\begin{theorem} \label{thm:Zariski}
Let $\mathbb{E}$ be a subfield of $\mathbb{F}$.
Suppose that $\mathbb{F}$ is finitely generated as an associative
algebra over $\mathbb{E}$.
Then $\lvert \mathbb{F} : \mathbb{E} \rvert$ is finite. 
\end{theorem}

\begin{proof}[Proof of Proposition \ref{prop:LE}]
Let $F$ be a finite subset of $\mathbb{F}$.
Let $R$ be the subring of $\mathbb{F}$ generated by
$A = F \cup \lbrace (x-y)^{-1} : x,y \in F , x \neq y \rbrace$,
and let $M \triangleleft R$ be a maximal (not necessarily nonzero) ideal in $R$.
Then $R/M$ is a field, which as an $\mathbb{F}_p$-algebra
is generated by the finite set $A+M$.
Thus by Theorem \ref{thm:Zariski} $R/M$ is a finite field, 
and hence by Lemma \ref{lem:FFMatEmb} is isomorphic to a 
subring of some $\mathbb{M}_{d} (\mathbb{F}_p)$. 
Finally, the quotient homomorphism $R \rightarrow R/M$
restricts to an injection of $F$,
since by construction of $A$, whenever $x , y \in F$ with $x \neq y$,
$x-y$ is a unit in $R$, so does not lie in $M$. 
\end{proof}

\subsection{Additive cellular automata}
\label{s:additive}

In this subsection, we apply Corollary \ref{cor:kaplansky:Fp} to obtain yet another equivalent version of Kaplansky's stable finiteness conjecture, proving Theorem \ref{intro:thm:A:surj} and Corollary \ref{intro:cor:surjunctive} from the Introduction. We start by introducing the relevant definitions.

\begin{definition}\label{defi:automata}
Let $\Gamma$ be a group, and let $F$ be a finite set, called the \emph{alphabet}. Let $F^{\Gamma}$ be the set $\prod\limits_{\gamma \in \Gamma} F$ endowed with the usual left action of $\Gamma$ and the prodiscrete topology. A \emph{cellular automaton over $\Gamma$} is a continuous $\Gamma$-equivariant map $f \colon F^\Gamma \to F^\Gamma$.

If $F$ is a finite-dimensional vector space over a field $\mathbb{K}$, and $f$ is a linear map, then $f$ is called a \emph{linear cellular automaton}.

If $F$ is a finite abelian group, and $f$ is a homomorphism, then $f$ is a called an \emph{additive cellular automaton}.
\end{definition}

The classical definition of cellular automaton is usually given in terms of memory sets and local defining maps \cite[Definition 1.4.1]{cell}, however the above definition is equivalent in case the underlying alphabet $F$ is finite \cite[Theorem 1.8.1]{cell}, which will be our case of interest. There is an immense literature on cellular automata; we refer the reader to the book \cite{cell} for a group-theoretic viewpoint, which is most relevant for our purposes. Linear cellular automata over groups have been extensively studied by Ceccherini-Silberstein and Coornaert \cite[Chapter 8]{cell}. On the other hand, while the study of additive cellular automata is very well-developed in theoretical computer science (see \cite{additive} for a survey), to our knowledge an analysis parallel to the one of linear cellular automata over groups is absent from the literature.

\medskip

One of the fundamental problems in the theory of cellular automata is to understand so-called \emph{Garden of Eden states}, that is elements that lie outside the image of a given cellular automaton. This leads naturally to the study of the classes of groups for which Garden of Eden states do not exist, under the natural assumption of injectivity (which in certain classical settings is equivalent to reversibility):

\begin{definition}\label{defi:surjuctive}
A group $\Gamma$ is said to be \emph{surjunctive} if every injective cellular automaton over $\Gamma$ (with alphabet a finite set) is surjective. It is said to be \emph{$L$-surjunctive} if every injective linear cellular automaton over $\Gamma$ is surjective. It is said to be \emph{$A$-surjunctive} if every injective additive cellular automaton is surjective.
\end{definition}

The property of surjunctivity was introduced by Gottschalk in \cite{gottschalk}, who asked whether all groups are surjunctive: This is now known as the \emph{Gottschalk surjunctivity conjecture}. The class of sofic groups was introduced by Gromov \cite{gromov} and Weiss \cite{weiss} precisely in this context, for proving at once surjunctivity of many groups, including amenable and residually finite groups.
As mentioned above, the notion of $A$-surjunctivity seems to be absent from the literature.
The problem of $L$-surjunctivity has been studied extensively by Ceccherini-Silberstein and Coornaert \cite{df:ceccherinicoornaert, csc1, csc3, csc2}. In particular, they proved the following striking connection to Kaplansky's stable finiteness conjecture:

\begin{theorem}[\cite{df:ceccherinicoornaert}]
\label{thm:csc}
Let $\Gamma$ be a group. Then $\Gamma$ is $L$-surjunctive if and only if $\Gamma$ satisfies Kaplansky's stable finiteness conjecture. More precisely, given a $d$-dimensional vector space $V$ over a field $\mathbb{K}$, the following are equivalent:
\begin{enumerate}
    \item Every injective linear cellular automaton $V^\Gamma \to V^\Gamma$ is surjective.
    \item The ring $\mathbb{M}_d(\mathbb{K}[\Gamma])$ is directly finite.
\end{enumerate}
\end{theorem}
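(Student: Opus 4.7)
The plan is to set up an algebraic dictionary identifying linear cellular automata on $V^\Gamma$ with matrices in $\mathbb{M}_d(\mathbb{K}[\Gamma])$, translate injectivity and surjectivity of such automata into one-sided invertibility statements, and then extract both implications from the fact that direct finiteness is precisely the property that one-sided invertibility upgrades to two-sided.

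First, fix a $\mathbb{K}$-basis of $V$ to identify $V$ with $\mathbb{K}^d$. Every linear cellular automaton $T: V^\Gamma \to V^\Gamma$ has a finite memory set $S \subset \Gamma$ and a $\mathbb{K}$-linear local rule $\mu: V^S \to V$, which decomposes as $\mu = (\mu_s)_{s \in S}$ with each $\mu_s \in \mathbb{M}_d(\mathbb{K})$. The element $\widehat{T} := \sum_{s \in S} \mu_s \cdot s \in \mathbb{M}_d(\mathbb{K})[\Gamma] \cong \mathbb{M}_d(\mathbb{K}[\Gamma])$ is well-defined independently of the chosen memory set, and the assignment $T \mapsto \widehat{T}$ is a ring anti-isomorphism between the algebra of linear cellular automata under composition and $\mathbb{M}_d(\mathbb{K}[\Gamma])$. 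The bridge between injectivity on $V^\Gamma$ and algebraic one-sided invertibility then uses the $\mathbb{K}$-linear pairing between $V^\Gamma$ and the space of finitely supported configurations $V^{(\Gamma)} = \bigoplus_\Gamma V$, which carries a natural free $\mathbb{K}[\Gamma]$-module structure of rank $d$ via the shift action. The transpose of a linear cellular automaton $T$ with respect to this pairing is a $\mathbb{K}[\Gamma]$-linear map $T^\vee : \mathbb{K}[\Gamma]^d \to \mathbb{K}[\Gamma]^d$, and an explicit computation identifies $T^\vee$ with left multiplication by $\widehat{T}$ (possibly transposed, depending on convention). A density-and-continuity argument then shows that injectivity of $T$ on $V^\Gamma$ is equivalent to surjectivity of $T^\vee$ on $\mathbb{K}[\Gamma]^d$.

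For the forward direction, suppose $\mathbb{M}_d(\mathbb{K}[\Gamma])$ is directly finite and let $T$ be an injective linear cellular automaton. By the bridge, $T^\vee$ is surjective, and since $\mathbb{K}[\Gamma]^d$ is free of finite rank, we can split the identity to obtain $N \in \mathbb{M}_d(\mathbb{K}[\Gamma])$ with $\widehat{T} N = I$. Direct finiteness yields $N \widehat{T} = I$, and translating back through the dictionary gives a cellular automaton $T_N$ that is a two-sided inverse for $T$, so $T$ is surjective. For the converse, suppose every injective linear cellular automaton is surjective, and let $M, N \in \mathbb{M}_d(\mathbb{K}[\Gamma])$ satisfy $MN = I$. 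The associated cellular automata satisfy $T_N \circ T_M = \mathrm{id}$, so $T_M$ is injective; by assumption $T_M$ is also surjective, hence bijective, which forces $T_M \circ T_N = \mathrm{id}$ and therefore $NM = I$.

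The main obstacle is the bridge established in the second paragraph: carefully identifying the $\mathbb{K}$-linear transpose of a continuous cellular automaton with matrix multiplication on $\mathbb{K}[\Gamma]^d$, and matching injectivity on the topological module $V^\Gamma$ with surjectivity on the discrete algebraic module $\mathbb{K}[\Gamma]^d$. Once this dictionary is in place, the remaining arguments are formal manipulations, exploiting only that direct finiteness is, by definition, the property that one-sided inverses in a given matrix algebra are automatically two-sided.
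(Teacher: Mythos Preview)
This theorem is not proved in the paper: it is stated with a citation to Ceccherini-Silberstein and Coornaert and used as a black box (see the bracketed reference in the theorem heading, and the fact that the text moves directly to Corollary~\ref{cor:L:finite} without argument). So there is no ``paper's own proof'' to compare your proposal against.

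That said, your sketch is essentially the argument from the cited source. The dictionary between linear cellular automata on $V^{\Gamma}$ and $\mathbb{M}_d(\mathbb{K}[\Gamma])$ via local rules is standard, and the key point you single out---that injectivity of $T$ on the full product $V^{\Gamma}$ corresponds to surjectivity of the transpose $T^{\vee}$ on the finitely supported configurations $V^{(\Gamma)}\cong\mathbb{K}[\Gamma]^d$---is exactly the algebraic duality $(f^{\ast}\text{ injective}\Leftrightarrow f\text{ surjective},\ f^{\ast}\text{ surjective}\Leftrightarrow f\text{ injective})$ for $\mathbb{K}$-linear maps, since $V^{\Gamma}$ is the full $\mathbb{K}$-dual of $V^{(\Gamma)}$. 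Your ``density-and-continuity'' phrasing is slightly misleading here: no topology is needed for this step, only linear algebra over $\mathbb{K}$. Once that bridge is in place, both directions reduce, as you say, to the definition of direct finiteness, and your converse argument (promote $T_M$ from injective to bijective, then conclude $T_N$ is the two-sided inverse) is correct.
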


Here the field $\mathbb{K}$, and thus the vector space $V$, need not be finite, and in that case the definition of cellular automaton assumed in the statement is the classical one \cite[Definition 1.4.1]{cell}. However, thanks to our results of the previous subsection, we deduce that for the characterization it suffices to look at finite fields:

\begin{corollary}
\label{cor:L:finite}

Let $\Gamma$ be a group. Then $\Gamma$ is $L$-surjunctive if and only if $\Gamma$ satisfies Kaplansky's stable finiteness conjecture over $\mathbb{F}_p$, for all primes $p$.
\end{corollary}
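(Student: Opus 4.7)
The plan is to derive this as a direct consequence of Theorem \ref{thm:csc} combined with the reduction to $\mathbb{F}_p$ established in Corollary \ref{cor:kaplansky:Fp} and the classical characteristic-zero result Theorem \ref{thm:df:char0}. The forward direction is essentially immediate: by Theorem \ref{thm:csc}, $L$-surjunctivity of $\Gamma$ is equivalent to Kaplansky's stable finiteness conjecture holding for $\mathbb{K}[\Gamma]$ for every field $\mathbb{K}$, and specializing $\mathbb{K} = \mathbb{F}_p$ yields stable finiteness over $\mathbb{F}_p$ for all primes $p$.

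For the converse, I assume that $\mathbb{F}_p[\Gamma]$ is stably finite for every prime $p$ and aim to show that $\mathbb{K}[\Gamma]$ is stably finite for an arbitrary field $\mathbb{K}$; applying Theorem \ref{thm:csc} in the other direction will then give $L$-surjunctivity. I split into two cases according to the characteristic of $\mathbb{K}$. If $\operatorname{char}(\mathbb{K}) = 0$, stable finiteness of $\mathbb{K}[\Gamma]$ is automatic by Theorem \ref{thm:df:char0}, and no hypothesis is needed. If $\operatorname{char}(\mathbb{K}) = p > 0$, the hypothesis that $\mathbb{F}_p[\Gamma]$ is stably finite together with Corollary \ref{cor:kaplansky:Fp} transfers stable finiteness to every field of characteristic $p$, in particular to $\mathbb{K}$.

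Combining the two cases gives stable finiteness of $\mathbb{K}[\Gamma]$ for every field $\mathbb{K}$, hence $L$-surjunctivity of $\Gamma$ via Theorem \ref{thm:csc}. No step is an obstacle here: the content of the corollary is entirely packaged into the preceding results, and the argument is just the observation that characteristic zero is free and characteristic $p$ reduces to $\mathbb{F}_p$ thanks to Proposition \ref{prop:LE}.
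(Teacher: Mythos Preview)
Your proof is correct and follows exactly the paper's own argument: combine Theorem~\ref{thm:csc} with the reduction of stable finiteness to prime fields via Theorem~\ref{thm:df:char0} (characteristic~$0$) and Corollary~\ref{cor:kaplansky:Fp} (characteristic~$p$). The paper states this in a single sentence, but your expanded version is the intended unpacking.
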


\begin{proof}
This follows directly by combining Theorem \ref{thm:csc} and the fact that Kaplansky's stable finiteness conjecture reduces to the fields $\mathbb{F}_p$, by Theorem \ref{thm:df:char0} and Corollary \ref{cor:kaplansky:Fp}.
\end{proof}

Our goal in this subsection is to show that the notions of $L$-surjunctivity and $A$-surjunctivity coincide:

\begin{theorem}
\label{thm:A:surj}

Let $\Gamma$ be a group. Then $\Gamma$ is $A$-surjunctive if and only if it is $L$-surjunctive.
\end{theorem}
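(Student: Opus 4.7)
The plan is to treat the two implications separately. For the easy direction, $A$-surjunctivity implies $L$-surjunctivity: the alphabet of a linear cellular automaton in Definition \ref{defi:automata} is a finite-dimensional vector space over a finite field $\mathbb{F}_q$ with $q = p^n$, which is a finite elementary abelian $p$-group, and $\mathbb{F}_q$-linear maps are in particular group homomorphisms. Hence every injective linear cellular automaton is an injective additive cellular automaton, and so surjective by $A$-surjunctivity of $\Gamma$.

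For the converse, assume $\Gamma$ is $L$-surjunctive and let $f : F^\Gamma \to F^\Gamma$ be an injective additive cellular automaton with $F$ a finite abelian group. I would first reduce to the case of $F$ a finite abelian $p$-group. Decomposing $F = \bigoplus_p F_p$ into its $p$-primary components, each $F_p$ is fully invariant in $F$, so $f$ preserves the product decomposition $F^\Gamma = \prod_p F_p^\Gamma$ and splits accordingly as $\bigoplus_p f_p$; injectivity and the desired surjectivity transfer coordinatewise, so we may assume $F$ is a finite abelian $p$-group.

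The main step is then an induction along the filtration $0 = F[p^0] \subseteq F[p] \subseteq F[p^2] \subseteq \cdots \subseteq F[p^m] = F$, where $F[p^k] := \ker(p^k \cdot : F \to F)$. Each $F[p^k]$ is fully invariant in $F$ (since $p^k \phi(x) = \phi(p^k x) = 0$ for every $\phi \in \End(F)$), so $f$ preserves $F[p^k]^\Gamma$, and the quotient $F[p^{k+1}]/F[p^k]$ is killed by $p$, hence is a finite-dimensional $\mathbb{F}_p$-vector space. I would prove by induction on $k$ that $f|_{F[p^k]^\Gamma}$ is bijective, the base case $k=0$ being trivial. For the inductive step, consider the short exact sequence
$$0 \to F[p^k]^\Gamma \to F[p^{k+1}]^\Gamma \to (F[p^{k+1}]/F[p^k])^\Gamma \to 0,$$
which is preserved by $f$. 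A short diagram chase shows that the induced map $\overline{f}$ on the $\mathbb{F}_p$-quotient is injective: if $\overline{f}(\overline{x}) = 0$ for $\overline{x}$ lifted to $x \in F[p^{k+1}]^\Gamma$, then $f(x) \in F[p^k]^\Gamma$; by the inductive hypothesis $f(x) = f(y)$ for some $y \in F[p^k]^\Gamma$, whence injectivity of $f$ forces $x = y \in F[p^k]^\Gamma$. Since $\overline{f}$ is shift-equivariant and continuous, it is an injective linear cellular automaton over $\mathbb{F}_p$, hence surjective by $L$-surjunctivity, and the five-lemma gives bijectivity of $f|_{F[p^{k+1}]^\Gamma}$. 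After $m$ steps, $f$ is bijective on $F^\Gamma$.

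The main obstacle will be the cleanup surrounding the passage to the filtration quotients: one must verify that the induced map $\overline{f}$ is a genuine cellular automaton (continuous and shift-equivariant) in the sense of Definition \ref{defi:automata}, so that the $L$-surjunctivity hypothesis applies. This is routine once one observes that the quotient maps $F[p^{k+1}] \twoheadrightarrow F[p^{k+1}]/F[p^k]$ commute with the diagonal $\Gamma$-action, but it needs to be stated carefully before the snake/five-lemma bookkeeping above can be invoked.
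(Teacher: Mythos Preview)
Your proof is correct and follows essentially the same approach as the paper: reduce to $p$-groups via the primary decomposition, then peel off $\mathbb{F}_p$-vector-space layers using the $p$-power torsion filtration and apply $L$-surjunctivity to each layer via a five-lemma argument. The paper organizes the hard direction as an induction on the exponent of $F$ (quotienting by the $p$-torsion $Q$ at each step) rather than building up along $F[p^k]$, and for the easy direction it routes through Corollary~\ref{cor:L:finite} instead of observing directly that every linear cellular automaton over a finite field is in particular additive---but these are cosmetic differences, and your easy direction is in fact slightly more direct.
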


Our proof will use Theorem \ref{thm:csc} as a base case for an induction argument, which is a mild version of the more involved induction argument that we will use in the next section.

\begin{proof}[Proof of Theorem \ref{thm:A:surj}]
If $\Gamma$ is $A$-surjunctive, then every injective additive cellular automaton $(\mathbb{F}_p^d)^\Gamma \to (\mathbb{F}_p^d)^\Gamma$ is surjective. This applies in particular to linear cellular automata, so $\Gamma$ is $L$-surjunctive.

Conversely, suppose that $\Gamma$ is $L$-surjunctive, and let $A$ be a finite abelian group, which we decompose as a sum of $p$-groups $A = \bigoplus_p A_p$. Then $A^\Gamma$ decomposes as $\bigoplus_p A_p^\Gamma$, and this decomposition is $\Gamma$-equivariant, where $\Gamma$ acts on the direct sum by shifting diagonally. Let $f \colon A^\Gamma \to A^\Gamma$ be an injective additive cellular automaton. Then, as in the proof of Lemma \ref{lem:AxB:Hopfian}, the image of $A_p^\Gamma$ is a $p$-group, so it belongs to $A_p^\Gamma$. It follows that $f$ restricts to injective additive cellular automata $A_p^\Gamma \to A_p^\Gamma$, and therefore we reduce to the case in which $A$ is a $p$-group.

We proceed by induction on the exponent of $A$. Let $Q$ be the subgroup of elements of order dividing $p$. Then $Q$ is a finite abelian group of exponent $p$, and therefore it is the additive group of a finite-dimensional vector space over $\mathbb{F}_p$. Moreover, the image of $Q^\Gamma$ under $f$ also has exponent $p$, which implies that $f$ restricts to an injective additive cellular automaton $Q^\Gamma \to Q^\Gamma$. This is automatically linear, and since $Q$ is a finite-dimensional $\mathbb{F}_p$-vector space, by the assumption on $L$-surjunctivity we have $f(Q^\Gamma) = Q^\Gamma$, and moreover by injectivity $f^{-1}(Q^\Gamma) = Q^{\Gamma}$ as well. Since $Q^\Gamma$ is closed and $\Gamma$-invariant, it follows that $f$ induces an injective additive cellular automaton $(A/Q)^\Gamma \to (A/Q)^\Gamma$. The exponent of $A/Q$ is strictly smaller than the exponent of $A$, and thus we conclude by induction. 
\end{proof}

In particular we obtain Corollary \ref{intro:cor:surjunctive} from the Introduction:

\begin{corollary}
\label{cor:surjunctive}

Surjunctive groups satisfy Kaplansky's stable finiteness conjecture. In particular, sofic groups satisfy Kaplansky's stable finiteness conjecture.
\end{corollary}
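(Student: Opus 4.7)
The plan is to unwind the definitions and chain together the equivalences already established. The key observation is that the notion of surjunctivity is strictly stronger than $A$-surjunctivity: an additive cellular automaton $f : A^\Gamma \to A^\Gamma$, where $A$ is a finite abelian group, is in particular a cellular automaton with finite alphabet (the underlying set of $A$), since both the continuity and the $\Gamma$-equivariance conditions depend only on the set-theoretic structure. Injectivity and surjectivity of $f$ as a map of sets are unaffected by remembering the abelian group structure, so every surjunctive group is $A$-surjunctive.

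Given this, the first statement follows immediately by applying Theorem \ref{thm:A:surj} (combined with Corollary \ref{cor:L:finite} via Theorem \ref{intro:thm:A:surj}): $A$-surjunctivity of $\Gamma$ is equivalent to $L$-surjunctivity, which by Theorem \ref{thm:csc} is in turn equivalent to direct finiteness of $\mathbb{M}_d(\mathbb{K}[\Gamma])$ for every $d$ and every field $\mathbb{K}$, i.e.\ to Kaplansky's stable finiteness conjecture for $\Gamma$.

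For the second statement, the only additional ingredient needed is the Gromov--Weiss theorem \cite{gromov, weiss}, which asserts that every sofic group is surjunctive. Applying the first part of the corollary then finishes the proof. No step is really an obstacle here, since all of the heavy lifting has been done in Proposition \ref{prop:LE}, Theorem \ref{thm:csc} and Theorem \ref{thm:A:surj}; the only thing to verify carefully is the trivial implication ``surjunctive $\Rightarrow$ $A$-surjunctive''.
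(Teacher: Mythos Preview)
Your proof is correct and follows essentially the same route as the paper's own argument: surjunctive $\Rightarrow$ $A$-surjunctive (trivially, since additive cellular automata over a finite abelian group are in particular cellular automata with finite alphabet), then Theorem~\ref{thm:A:surj} gives $L$-surjunctivity, and Theorem~\ref{thm:csc} converts this into stable finiteness; the sofic case is then the Gromov--Weiss theorem. The parenthetical detour through Corollary~\ref{cor:L:finite} and Theorem~\ref{intro:thm:A:surj} is unnecessary, since Theorem~\ref{thm:A:surj} already gives the equivalence of $A$- and $L$-surjunctivity directly, but this does not affect correctness.
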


\begin{proof}
Surjunctive groups are clearly $A$-surjunctive, thus $L$-surjunctive by Theorem \ref{thm:A:surj}. We conclude by Theorem \ref{thm:csc}. The last statement follows from the Gromov--Weiss Theorem \cite{gromov, weiss}.
\end{proof}

\begin{remark}
Our argument went through $A$-surjunctivity, which in turn used the result for $L$-surjunctivity from \cite{df:ceccherinicoornaert}. However, the corollary for surjunctive groups is really an elementary consequence of Corollary \ref{cor:kaplansky:Fp}: Indeed, it was noticed already in \cite{df:elekszabo} that surjunctive groups satisfy the direct finiteness conjecture over group rings, and virtually surjunctive groups are surjunctive \cite{arzhantsevagal}.
\end{remark}

Corollary \ref{cor:surjunctive} was recently independently proven by Phung \cite{df:phung} using methods from the theory of symbolic algebraic varieties. Still, the blueprint of his proof is similar to ours, in that it goes through the surjunctivity property for an algebro-geometric category of cellular automata.

\medskip

Let us end this section by pointing out that surjunctivity and Hopficity are in some sense dual to each other. In one case, injectivity implies surjectivity, and in the other case surjectivity implies injectivity. The relation between the two only occurs via direct finiteness, which is a symmetric property. In fact, in the results of the next section 
(for instance in the proof of Theorem \ref{PGrpStabFinThm}) 
one direction is elementary and follows a similar approach as the proof that surjunctivity implies stable finiteness; but the other direction will need an ad-hoc approach, which is moreover only possible thanks to the preliminary work in Section \ref{s:wreath}. 

\section{Wreath products and stable finiteness}
\label{s:main}

In this section we prove Theorem \ref{AbBaseIffThm},
which combined with Proposition \ref{prop:decomposition} and Corollary \ref{cor:solutionfreeabelian} gives Theorem
\ref{intro:thm:main} from the Introduction.
Using Corollary \ref{cor:kaplansky:Fp}
we then deduce Theorem \ref{intro:thm:main:general} (Theorem \ref{thm:main:general} below).

The following observation appears as Exercise 8.13 in \cite{cell}.
Since (a special case of) this observation is
essential for our purposes,
we give a self-contained proof.
For $R$ a ring and $d \geq 1$,
we write elements of $R^d$ as row-vectors
(so that matrices act on the right).

\begin{lemma} \label{lem:HopfModulesDFMatrix}
Let $R$ be a unital ring and let $d \geq 1$.
Then $R^d$ is a Hopfian left $R$-module
if and only if the ring $\mathbb{M}_d (R)$
is directly finite.
\end{lemma}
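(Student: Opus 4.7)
The plan is to identify the ring $\mathbb{M}_d(R)$ with (the opposite of) the endomorphism ring $\End_R(R^d)$, and then translate the module-theoretic condition of being Hopfian into the ring-theoretic condition of direct finiteness. The opposite ring subtlety is harmless because direct finiteness is a symmetric condition: by Lemma \ref{lem:left:right}, a unital ring $S$ is directly finite if and only if $S^{op}$ is.

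First I would set up the identification. Sending $A \in \mathbb{M}_d (R)$ to the map $\phi_A : v \mapsto vA$ defines a ring anti-isomorphism $\mathbb{M}_d (R) \to \End_R(R^d)$, because $\phi_A \circ \phi_B = \phi_{BA}$. The next step is the key observation that drives the equivalence: an endomorphism $\phi$ of the free module $R^d$ is surjective if and only if it has a right inverse in $\End_R(R^d)$. One direction is immediate. For the other, exploit freeness: if $\phi$ is surjective, lift the standard basis vectors $e_1, \ldots, e_d$ to $f_1, \ldots, f_d \in R^d$ with $\phi(f_i) = e_i$, and let $\psi$ be the unique $R$-linear map sending $e_i$ to $f_i$. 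Then $\phi \circ \psi = \id_{R^d}$.

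Now I would combine both. Suppose $R^d$ is Hopfian, and take $A, B \in \mathbb{M}_d(R)$ with $AB = I_d$, so that the corresponding endomorphisms satisfy $\phi_B \circ \phi_A = \id$ (or the other order, depending on conventions). Then one of them is surjective, hence injective by Hopficity, hence an automorphism; by Lemma \ref{lem:left:right} the one-sided inverse is in fact two-sided, which gives direct finiteness of $\End_R(R^d)$, and therefore of $\mathbb{M}_d(R)$. Conversely, assume direct finiteness and let $\phi$ be a surjective endomorphism of $R^d$. The key observation yields a right inverse $\psi$; direct finiteness of $\End_R(R^d)$ upgrades $\psi$ to a two-sided inverse, so $\phi$ is an automorphism, hence injective. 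This shows $R^d$ is Hopfian.

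I do not anticipate a real obstacle here: the whole statement is essentially bookkeeping once one has the key observation that surjective endomorphisms of a free module are split-surjective. The only conceptually delicate point, which is worth flagging in the write-up, is that the analogous statement for injectivity fails in general. This asymmetry is precisely why Hopficity of $R^d$ (surjectivity forces injectivity) matches direct finiteness of the matrix ring (one-sided invertibility forces two-sided invertibility), rather than some stronger or weaker ring-theoretic property.
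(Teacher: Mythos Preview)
Your proposal is correct and follows essentially the same approach as the paper: identify $\End_R(R^d)$ with $\mathbb{M}_d(R)$ (up to opposite), use freeness to split any surjective endomorphism, and then invoke direct finiteness (via Lemma \ref{lem:left:right}) to pass between one-sided and two-sided inverses. The paper's argument for the direction ``not directly finite $\Rightarrow$ non-Hopfian'' is phrased slightly more concretely---it takes $XY=I_d$, $YX\neq I_d$ and exhibits $v\mapsto vY$ as a non-injective epimorphism---but this is just the explicit unwinding of your abstract version.
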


\begin{proof}
First suppose that $\mathbb{M}_d ( R )$
is directly finite
and that $\varphi \colon R^d \rightarrow R^d$ is
a surjective left $R$-module endomorphism.
Let $\mathcal{B}$ be the standard free basis for $R^d$.
For each $b \in \mathcal{B}$,
let $c_b \in \varphi^{-1} (b)$.
Let $\psi \colon R^d \rightarrow R^d$ be the (unique) left
$R$-module endomorphism of $R^d$ given by
$\psi(b) = c_b$ for all $b \in \mathcal{B}$.
Then $\varphi \circ \psi = \id_{R^d}$.
Since $\End_{R}(R^d) \cong \mathbb{M}_d (R)$,
we deduce $\psi \circ \varphi = \id_{R^d}$ also,
and $\varphi$ is an isomorphism.

Conversely suppose that
$X,Y \in \mathbb{M}_d (R)$
satisfy $XY=I_d$ but $YX\neq I_d$.
Define $\varphi \colon R^d \rightarrow R^d $
by $\varphi (v) = vY$.
Then $\varphi$ is clearly a left $R$-module homomorphism.
Moreover $v = \varphi (vX)$, so $\varphi$ is surjective.
Since $YX\neq I_d$, there exists $v \in R^d$ such that $v YX \neq v$.
Let $0 \neq u = vYX-v \in R^d$.
Then $uY = 0$ so $u \in \ker (\varphi)$,
so $\varphi$ is not injective, and $R^d$ is not Hopfian.
\end{proof}

Next, we examine the special case in which the base group is a power of a fixed cyclic group, which will serve as the basis of an induction in the general case. We use the convention that $\mathbb{Z}/n\mathbb{Z} = \mathbb{Z}$ when $n = 0$.

\begin{theorem} \label{PGrpStabFinThm}
Let $\Gamma$ be a finitely generated Hopfian group;
let $n \geq 0$, and let $d \geq 1$.
Then
$\big((\mathbb{Z}/n\mathbb{Z})^d\big) \wr \Gamma$ is Hopfian
if and only if $\mathbb{M}_d \big( (\mathbb{Z}/n\mathbb{Z}) [\Gamma] \big)$ is directly finite.
\end{theorem}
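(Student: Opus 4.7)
The plan is to identify the base group $A[\Gamma]$ with the free left module $R^d$ over $R := (\mathbb{Z}/n\mathbb{Z})[\Gamma]$, and combine Proposition \ref{prop:basic:morphism} with Lemma \ref{lem:HopfModulesDFMatrix}. The key observation is that diagonal basic self-epimorphisms $\Psi(f, \gamma) = (\psi(f), \gamma)$ of $A \wr \Gamma$ are in bijection with surjective $R$-module endomorphisms $\psi$ of $A[\Gamma]$: the conjugacy relation in the semidirect product translates precisely to $\Gamma$-equivariance of $\psi$, and because $A$ has exponent dividing $n$, any such $\mathbb{Z}$-linear map is automatically $R$-linear. Under this correspondence $\Psi$ is injective if and only if $\psi$ is.

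First I would dispose of the case where $\Gamma$ is finite. Then $A \wr \Gamma$ is virtually finitely generated abelian, hence residually finite and so Hopfian; and $R$ is either a finite ring or, when $n=0$, a subring of $\mathbb{Q}[\Gamma]$, so $\mathbb{M}_d(R)$ is directly finite in either case, using Lemma \ref{lem:DFsubobject} and Corollary \ref{cor:df:Z}. So I may assume $\Gamma$ is infinite, which puts us in the setting of Proposition \ref{prop:basic:morphism}.

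For the forward direction, suppose $A \wr \Gamma$ is Hopfian and let $\psi$ be an arbitrary surjective $R$-module endomorphism of $R^d \cong A[\Gamma]$. The associated $\Psi$ is a self-epimorphism of $A \wr \Gamma$ (surjective since both $\Gamma$ and $A[\Gamma]$ are in its image), hence injective by assumption, so $\psi$ is injective. By Lemma \ref{lem:HopfModulesDFMatrix}, $\mathbb{M}_d(R)$ is directly finite. Conversely, suppose $\mathbb{M}_d(R)$ is directly finite, and let $\Phi$ be any self-epimorphism of $A \wr \Gamma$. By Proposition \ref{prop:basic:morphism} one has $\Phi(A[\Gamma]) = A[\Gamma]$, $\ker(\Phi) \leq A[\Gamma]$, and $\Phi$ is injective iff the associated diagonal basic epimorphism $\Psi(f, \gamma) = (\Phi(f), \gamma)$ is. But $\Phi|_{A[\Gamma]}$ is a surjective $R$-module endomorphism of $R^d$, hence injective by Lemma \ref{lem:HopfModulesDFMatrix} and our hypothesis; therefore $\Psi$, and with it $\Phi$, is injective.

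The principal obstacle has already been cleared in Section \ref{s:wreath}, by Proposition \ref{prop:basic:morphism}: without that reduction one would have to contend with self-epimorphisms mixing base and quotient in a way that obscures the module-theoretic content. Granted that reduction, the remainder of the argument is essentially a bookkeeping translation between self-epimorphisms of the wreath product and surjective $R$-endomorphisms of the free module $R^d$, and a single appeal to Lemma \ref{lem:HopfModulesDFMatrix}.
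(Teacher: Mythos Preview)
Your proposal is correct and follows essentially the same route as the paper's proof: dispose of finite $\Gamma$, identify $A[\Gamma]$ with the free left $R$-module $R^d$ for $R=(\mathbb{Z}/n\mathbb{Z})[\Gamma]$, use Proposition~\ref{prop:basic:morphism} to reduce to diagonal epimorphisms $(f,\gamma)\mapsto(\psi(f),\gamma)$, observe that the homomorphism relation forces $\psi$ to be $R$-linear, and then invoke Lemma~\ref{lem:HopfModulesDFMatrix}. The only cosmetic difference is that the paper argues both directions by contrapositive (exhibiting a non-injective map on each side), whereas you phrase them directly.
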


\begin{proof}
If $\Gamma$ is finite, then $\big((\mathbb{Z}/n\mathbb{Z})^d\big) \wr \Gamma$ is Hopfian, being finitely generated and residually finite by \cite{grunberg} (note that $\big((\mathbb{Z}/n\mathbb{Z})^d\big) \wr \Gamma$ is not necessarily finite, since $n$ is allowed to be equal to $0$).
Moreover $(\mathbb{Z}/n\mathbb{Z})[\Gamma]$ is stably finite: This is immediate when $n \geq 1$ since finite rings are stably finite, and it is Corollary \ref{cor:df:Z} in case $n = 0$. Therefore we may assume that $\Gamma$ is infinite,
which allows us to use the results from Section \ref{s:wreath}.

Let $R = (\mathbb{Z}/n\mathbb{Z}) [\Gamma]$.
Then $(\mathbb{Z}/n\mathbb{Z})^d [\Gamma]$
is naturally a left $R$-module.
Indeed, it is isomorphic to $R^d$ as a left $R$-module:
Given $f \in (\mathbb{Z}/n\mathbb{Z})^d [\Gamma]$,
write:
\begin{center}
$f(g) = \big( f(g)_1 , \ldots , f(g)_d \big)$
\end{center}
for $g \in \Gamma$
(with $f(g)_i \in \mathbb{Z}/n\mathbb{Z}$),
and identify $f$ with $(f_1 , \ldots f_d) \in R^d$,
where $f_i \in R = (\mathbb{Z}/n\mathbb{Z}) [\Gamma]$
is given by $f_i(g) = f(g)_i$.
Under this identification,
we have
$\big((\mathbb{Z}/n\mathbb{Z})^d\big) \wr \Gamma
\cong R^d \rtimes \Gamma$,
where the group operation is given by:
\begin{center}
$(v_1 g_1) (v_2,g_2) = (v_1 + g_1 \cdot v_2,g_1 g_2)$
\end{center}
(where $g_1 \cdot v_2$ denotes the left action of
$\Gamma$ on $R^d$ under the left
$(\mathbb{Z}/n\mathbb{Z}) [\Gamma]$-module structure of $R^d$).
For the remainder of the proof we work with
$R^d \rtimes \Gamma$.

First suppose that $R^d \rtimes \Gamma$ is non-Hopfian,
and that $\varphi \colon R^d \rtimes \Gamma \to R^d \rtimes \Gamma$
is a non-injective group epimorphism.
Let $\psi \colon R^d \rtimes \Gamma \rightarrow R^d \rtimes \Gamma$
be as in Proposition \ref{prop:basic:morphism}; that is $\psi(v, \gamma) = (\varphi(v), \gamma)$, and $\psi$ is also a non-injective group epimorphism.
Then the homomorphism relation implies:
\begin{center}
$\big( \varphi (g \cdot v),g\big)
= \psi (g \cdot v,g)
= \psi (0,g) \psi(v,1_{\Gamma})
= \big( g \cdot \varphi (v),g\big)$
\end{center}
for all $v \in R^d$ and $g \in \Gamma$,
so the restriction of $\varphi$
to $R^d$ is in fact a left $R$-module homomorphism,
which by Proposition \ref{prop:basic:morphism} is
surjective but not injective.
In other words, $R^d$ is a non-Hopfian left $R$-module,
and by Lemma \ref{lem:HopfModulesDFMatrix},
$\mathbb{M}_d(R)$ is not directly finite.

Conversely suppose that $\mathbb{M}_d(R)$
is not directly finite,
so that by Lemma \ref{lem:HopfModulesDFMatrix}
there exists a surjective left $R$-module homomorphism
$\psi \colon R^d \to R^d$
which is not injective.
Define $\varphi \colon R^d \rtimes \Gamma \rightarrow R^d \rtimes \Gamma$
by $\varphi (v,g) = (\psi(v),g)$.
Then $\varphi$ is a homomorphism of groups,
which is surjective but not injective.
\end{proof}

This already implies the full solution to our problem for free abelian groups:

\begin{corollary}
\label{cor:solutionfreeabelian}

Let $A$ be a free abelian group of finite rank, and let $\Gamma$ be a finitely generated group. Then $\Gamma$ is Hopfian if and only if $A \wr \Gamma$ is Hopfian.
\end{corollary}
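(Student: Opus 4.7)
The proof plan is to combine the key ingredients already in place. Write $A \cong \mathbb{Z}^d$ for some $d \geq 1$, and recall the convention $\mathbb{Z}/0\mathbb{Z} = \mathbb{Z}$, so $A \cong (\mathbb{Z}/0\mathbb{Z})^d$.

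For the forward implication, assume $A \wr \Gamma$ is Hopfian. Then Proposition \ref{intro:prop:G:Hopfian} (equivalently the ``in particular'' clause of Lemma \ref{lem:G:Hopfian}, which applies because the base is abelian) immediately gives that $\Gamma$ is Hopfian. No further work is needed here.

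For the reverse implication, assume $\Gamma$ is Hopfian. By Theorem \ref{PGrpStabFinThm} applied with $n = 0$, the wreath product $(\mathbb{Z}/0\mathbb{Z})^d \wr \Gamma = A \wr \Gamma$ is Hopfian if and only if the matrix ring $\mathbb{M}_d(\mathbb{Z}[\Gamma])$ is directly finite. The latter holds for every group $\Gamma$ by Corollary \ref{cor:df:Z}, which asserts that $\mathbb{Z}[\Gamma]$ is stably finite (as a consequence of Theorem \ref{thm:df:char0} applied to $\mathbb{Q}[\Gamma]$ and the embedding $\mathbb{Z}[\Gamma] \hookrightarrow \mathbb{Q}[\Gamma]$, via Lemma \ref{lem:DFsubobject}). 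Hence $A \wr \Gamma$ is Hopfian, which completes the proof.

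There is no real obstacle: the nontrivial content is packaged in Theorem \ref{PGrpStabFinThm} and Corollary \ref{cor:df:Z}, both of which are available. The only point worth flagging is the role of the $n = 0$ convention, which is precisely what allows Theorem \ref{PGrpStabFinThm} to cover the free abelian case uniformly with the finite cyclic case.
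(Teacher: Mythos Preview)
Your proof is correct and matches the paper's own argument essentially line for line: both directions invoke Lemma~\ref{lem:G:Hopfian} and Theorem~\ref{PGrpStabFinThm} (with $n=0$) together with Corollary~\ref{cor:df:Z}. Your write-up is arguably slightly cleaner in making explicit that the Hopficity of $\Gamma$ is needed as a hypothesis before Theorem~\ref{PGrpStabFinThm} can be applied.
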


\begin{proof}
Let $A = \mathbb{Z}^d$.
If $A \wr \Gamma$ is Hopfian,
then by Lemma \ref{lem:G:Hopfian},
so too is $\Gamma$.
Conversely, if $A \wr \Gamma$ is non-Hopfian,
then Theorem \ref{PGrpStabFinThm} (applied with $n=0$)
implies that $\mathbb{M}_d (\mathbb{Z}[\Gamma])$
is not directly finite, contradicting
Corollary \ref{cor:df:Z}.
\end{proof}

Thanks to Proposition \ref{prop:decomposition}, we are left to understand when $P \wr \Gamma$ is Hopfian, where $P$ is a finite abelian $p$-group.

\begin{theorem} \label{AbBaseIffThm}
Let $\Gamma$ be a finitely generated Hopfian group.
Let $P = (\mathbb{Z}/p\mathbb{Z})^{d_1} \oplus \cdots \oplus (\mathbb{Z}/p^m\mathbb{Z})^{d_m}$ be a finite abelian $p$-group.
Then $P \wr \Gamma$ is Hopfian if and only if
$\mathbb{M}_{\max_i (d_i)} \big( \mathbb{F}_p [\Gamma] \big)$
is directly finite.
\end{theorem}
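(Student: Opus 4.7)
The plan is to prove the two implications separately. For the \emph{forward} direction I would argue contrapositively. Let $i_0$ be an index with $d_{i_0} = d$, so that $(\mathbb{Z}/p^{i_0}\mathbb{Z})^d$ is a direct summand of $P$. Given $X, Y \in \mathbb{M}_d(\mathbb{F}_p[\Gamma])$ with $XY = I$ and $YX \neq I$, lift them to matrices $X_0, Y_0$ over $(\mathbb{Z}/p^{i_0}\mathbb{Z})[\Gamma]$. Then $X_0 Y_0 = I + pN$ for some matrix $N$; since $(pN)^{i_0} = 0$, this is invertible with inverse $\sum_{k=0}^{i_0 - 1} (-pN)^k$. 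Setting $Y^\sharp := Y_0 (I + pN)^{-1}$ gives $X_0 Y^\sharp = I$ in $\mathbb{M}_d((\mathbb{Z}/p^{i_0}\mathbb{Z})[\Gamma])$, while $Y^\sharp X_0 \equiv YX \not\equiv I \pmod p$, so $\mathbb{M}_d((\mathbb{Z}/p^{i_0}\mathbb{Z})[\Gamma])$ is not directly finite. Theorem \ref{PGrpStabFinThm} then shows $(\mathbb{Z}/p^{i_0}\mathbb{Z})^d \wr \Gamma$ is non-Hopfian, and Lemma \ref{lem:AB:Hopfian} transfers this to $P \wr \Gamma$.

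For the \emph{backward} direction, assume $\mathbb{M}_d(\mathbb{F}_p[\Gamma])$ is directly finite. By Proposition \ref{prop:basic:morphism}, it suffices to show that every $\mathbb{Z}[\Gamma]$-linear surjection $\varphi : P[\Gamma] \to P[\Gamma]$ is injective. I would induct on the exponent $p^m$ of $P$; the base case $m = 1$ is immediate from Lemma \ref{lem:HopfModulesDFMatrix} since $P[\Gamma] = (\mathbb{F}_p[\Gamma])^{d_1}$ with $d = d_1$. For the inductive step, the crucial tool is the canonical (Ulm-type) filtration of $V := P/pP$ by $V_{\leq j} := \im(P[p^j] \hookrightarrow P \twoheadrightarrow V)$; a direct computation identifies $V_{\leq j}$ with $\bigoplus_{i \leq j} P_i/pP_i$ (for any cyclic decomposition $P = \bigoplus_i P_i$), so the successive quotients are $V_{\leq j}/V_{\leq j-1} \cong \mathbb{F}_p^{d_j}$. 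Tensoring with $\mathbb{Z}[\Gamma]$ yields a $\bar\varphi$-invariant filtration of $V[\Gamma]$ with successive quotients $(\mathbb{F}_p[\Gamma])^{d_j}$, each of rank at most $d$.

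Since each successive quotient is free over $\mathbb{F}_p[\Gamma]$, the filtration splits into a direct sum decomposition $V[\Gamma] = \bigoplus_j W_j$ with $W_j \cong (\mathbb{F}_p[\Gamma])^{d_j}$, rendering $\bar\varphi$ block upper-triangular with diagonal blocks $A_j : W_j \to W_j$. A top-down argument now shows each $A_j$ is bijective: the induced map on $V[\Gamma]/V_{\leq m-1}[\Gamma] \cong W_m$ is $A_m$, which is surjective (because $\bar\varphi$ is) and hence invertible by direct finiteness of $\mathbb{M}_{d_m}(\mathbb{F}_p[\Gamma])$, which follows from the hypothesis via Lemma \ref{lem:DFsubobject}; once $A_m$ is invertible, the upper-triangular structure lets one show that $\bar\varphi|_{V_{\leq m-1}[\Gamma]}$ is also surjective, and the argument iterates to yield all $A_j$ bijective, hence $\bar\varphi$ bijective. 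This forces $\ker \varphi \subseteq pP[\Gamma]$. Since $pP$ has exponent at most $p^{m-1}$ with rank profile $(d_2, \ldots, d_m)$ whose maximum is at most $d$, the inductive hypothesis gives that $pP[\Gamma]$ is Hopfian; as $\varphi|_{pP[\Gamma]}$ is surjective (because $\varphi(pP[\Gamma]) = p\varphi(P[\Gamma]) = pP[\Gamma]$), it is injective, forcing $\ker \varphi = 0$.

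\textbf{The main obstacle} is to arrange matters so that only direct finiteness of $\mathbb{M}_d(\mathbb{F}_p[\Gamma])$ is invoked, rather than of the potentially stronger $\mathbb{M}_{\sum_i d_i}(\mathbb{F}_p[\Gamma])$. A naive approach that applies $\bar\varphi$ directly to the full $p$-torsion $P[\Gamma][p] \cong (\mathbb{F}_p[\Gamma])^{\sum_i d_i}$ would require the latter and thus miss the sharp bound $d = \max_i d_i$ in the theorem. The Ulm filtration above resolves this by refining $V$ into pieces of ranks exactly the individual $d_j$; the remaining subtlety is that $\bar\varphi$ must be shown not merely to preserve this filtration but to act surjectively on each graded piece, and it is this surjectivity (established via the top-down block-triangular argument) that actually invokes the direct finiteness hypothesis.
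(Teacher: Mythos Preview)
Your argument is correct (with the minor omission that you should dispose of finite $\Gamma$ separately before invoking Proposition~\ref{prop:basic:morphism} and Lemma~\ref{lem:AB:Hopfian}, both of which assume $\Gamma$ infinite; the paper does this at the start of its proof).

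Your route is closely related to the paper's but organised differently, and in places more economically. For the forward (contrapositive) direction, the paper passes to the direct factor $R = (\mathbb{Z}/p^{i}\mathbb{Z})^{d_i} \oplus \cdots \oplus (\mathbb{Z}/p^m\mathbb{Z})^{d_m}$ and builds an explicit non-injective epimorphism of $R\wr\Gamma$ by hand from the one-sided inverse, using a Hensel lift; you instead reduce all the way down to the homogeneous summand $(\mathbb{Z}/p^{i_0}\mathbb{Z})^d$, do the same Hensel correction at the matrix level, and then invoke the already-proved Theorem~\ref{PGrpStabFinThm}---this is shorter and avoids the explicit construction. For the backward direction, the two arguments are essentially dual: the paper works with the $p$-torsion $Q=P[p]$ and quotients by it (inducting on $P/Q$), filtering $Q[\Gamma]$ by the submodules $V_i$ of $p^{i-1}$th powers; you work with $V=P/pP$ and the subgroup $pP$ (inducting on $pP$), filtering $V[\Gamma]$ by images of the $p^j$-torsion. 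Both filtrations have successive quotients free of ranks $d_1,\ldots,d_m$ and force the induced map to be block upper-triangular; the paper packages the resulting linear algebra into Lemmas~\ref{lem:BlockStrongLeftUnit}, \ref{lem:UUTGrp}, \ref{BlockDiagEquivLem} on the ring $B_{\mathbf{d}}(\mathbb{F}_p[\Gamma])$, whereas your top-down surjectivity argument does the same work inline. Either approach yields the sharp bound $\max_i d_i$ for the same reason: the filtration has graded pieces of ranks exactly the $d_i$, so only direct finiteness of $\mathbb{M}_{d_i}(\mathbb{F}_p[\Gamma])$ is ever invoked.
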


Note that Theorems \ref{PGrpStabFinThm} and \ref{AbBaseIffThm} combine to immediately yield 
the following conclusion about directly finite rings. 

\begin{corollary}
Let $\Gamma$ be a finitely generated Hopfian group and let $d,m \geq 1$. 
Then $\mathbb{M}_d \big( \mathbb{F}_p [\Gamma] \big)$ 
is directly finite iff 
$\mathbb{M}_d \big( (\mathbb{Z}/p^m\mathbb{Z}) [\Gamma] \big)$ 
is directly finite. 
\end{corollary}

As a first step towards Theorem \ref{AbBaseIffThm}, 
we have the following reduction, which is essentially an application of Hensel's Lemma. 

\begin{lemma} \label{lem:Hensel}
Let $\Gamma$ be a group and let $d , m \geq 1$. 
If $\mathbb{M}_d \big( (\mathbb{Z}/p^{m+1}\mathbb{Z}) [\Gamma] \big)$
is directly finite, then so is
$\mathbb{M}_d \big( (\mathbb{Z}/p^m\mathbb{Z}) [\Gamma] \big)$.
\end{lemma}

\begin{proof}
Let $X,Y \in \mathbb{M}_d \big( (\mathbb{Z}/p^m\mathbb{Z}) [\Gamma] \big)$ 
and suppose $XY = I_d$. 
Let $\tilde{X},\tilde{Y} \in \mathbb{M}_d \big( \mathbb{Z} [\Gamma] \big)$ 
with $\tilde{X} \equiv X,\tilde{Y} \equiv Y \mod p^m$, 
so that there exists $Z \in \mathbb{M}_d \big( \mathbb{Z} [\Gamma] \big)$ such that 
$\tilde{X} \tilde{Y} = I_d + p^m Z$. 
Set $\overline{X} = \tilde{X} , \overline{Y} = \tilde{Y} (I_d - p^m Z)$, 
so that $\overline{X} \overline{Y} = I_d - p^{2m} Z \equiv I_d \mod p^{m+1}$. 
By direct finiteness of $\mathbb{M}_d \big( (\mathbb{Z}/p^{m+1}\mathbb{Z}) [\Gamma] \big)$, 
we have $\overline{Y} \overline{X} \equiv I_d \mod p^{m+1}$, 
but $\overline{X} \equiv X , \overline{Y} \equiv Y \mod p^m$, 
so $YX = I_d$ also. 
\end{proof}

\begin{notation} \label{BlockMxNotn}
For $\mathbf{d} = (d_1 , \ldots , d_m) \in \mathbb{N}^m$;
$D = d_1 + \cdots + d_m$,
and $R$ a unital ring, let $B_{\mathbf{d}} (R) \leq \mathbb{M}_D (R)$
be the subring given by:
\begin{equation*}
B_{\mathbf{d}} (R) \coloneqq
\left\{ \left( \begin{array}{cccc}
A_1 & \ast & \cdots & \ast \\
 0 & A_2 & \ddots & \vdots \\
 \vdots & \ddots & \ddots & \ast \\
 0 & \cdots & 0 & A_m
\end{array} \right) : A_i \in \mathbb{M}_{d_i} (R) \right\}.
\end{equation*}
\end{notation}

\begin{lemma} \label{lem:BlockStrongLeftUnit}
Keep the terminology from Notation \ref{BlockMxNotn}.
Suppose that $\mathbb{M}_{\max_i (d_i)} (R)$ is directly finite.
Let $X \in B_{\mathbf{d}} (R)$ and $Y \in \mathbb{M}_D (R)$
be such that $YX = I_D$.
Then $Y \in B_{\mathbf{d}} (R)$.
\end{lemma}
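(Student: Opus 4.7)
The plan is to establish this by a bottom-up induction on the block-row index, using the direct finiteness hypothesis to invert the diagonal blocks as we descend. Note first that by Lemma \ref{lem:DFsubobject}, the assumption that $\mathbb{M}_{\max_i(d_i)}(R)$ is directly finite implies that $\mathbb{M}_{d_i}(R)$ is directly finite for every $i = 1, \ldots, m$, since each $d_i \leq \max_j d_j$.

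Write $Y = (Y_{ij})_{1 \leq i,j \leq m}$ in block form compatible with the partition $d_1 + \cdots + d_m = D$, so that $Y_{ij}$ is a $d_i \times d_j$ matrix over $R$. Because $X$ is block upper triangular with diagonal blocks $A_1, \ldots, A_m$, the $(i,j)$-block of the relation $XY = I_D$ reads
\[ A_i Y_{ij} + \sum_{k > i} X_{ik} Y_{kj} = \delta_{ij} I_{d_i}. \]
The conclusion $Y \in B_{\mathbf{d}}(R)$ amounts to showing $Y_{ij} = 0$ whenever $i > j$. I would prove by downward induction on $i$, from $i = m$ to $i = 1$, the two statements jointly: (a) $A_i$ is invertible in $\mathbb{M}_{d_i}(R)$, with inverse $Y_{ii}$; and (b) $Y_{ij} = 0$ for all $j < i$.

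For the base case $i = m$, the sum is empty, and the block equation becomes $A_m Y_{mj} = \delta_{mj} I_{d_m}$. Taking $j = m$ shows that $A_m$ has a right inverse in $\mathbb{M}_{d_m}(R)$, hence (by direct finiteness of that ring) $Y_{mm}$ is a genuine two-sided inverse of $A_m$. Multiplying the remaining equations $A_m Y_{mj} = 0$ on the left by $Y_{mm}$ yields $Y_{mj} = 0$ for all $j < m$. For the inductive step, assume (a) and (b) hold for all indices strictly greater than $i$. Then for every $j \leq i$ and every $k > i$ we have $j \leq i < k$, so $Y_{kj} = 0$ by the inductive hypothesis; the sum in the block equation therefore vanishes, and we obtain $A_i Y_{ij} = \delta_{ij} I_{d_i}$. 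The case $j = i$ gives $A_i Y_{ii} = I_{d_i}$, and direct finiteness of $\mathbb{M}_{d_i}(R)$ upgrades this to a two-sided inverse; the cases $j < i$ then force $Y_{ij} = 0$ after left-multiplication by $A_i^{-1} = Y_{ii}$. This closes the induction and gives $Y \in B_{\mathbf{d}}(R)$.

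The proof is essentially a routine block-matrix calculation; the only substantive input beyond formal manipulation is precisely the conversion of a one-sided inverse of each diagonal block into a two-sided inverse, which is exactly what the hypothesis (via Lemma \ref{lem:DFsubobject}) supplies. No real obstacle is expected, though one must be careful to run the induction from the bottom row upward, since it is the lowest block row whose lower-triangular entries get pinned to zero first, which is what then lets the inductive sum collapse at each subsequent stage.
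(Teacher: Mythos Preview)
Your proof is correct and follows essentially the same approach as the paper: both arguments use the bottom block row of $XY=I_D$ to see that $A_m$ is one-sided (hence two-sided) invertible, deduce $Y_{mj}=0$ for $j<m$, and then proceed upward. The only cosmetic difference is that the paper phrases this as induction on the number $m$ of blocks (passing to the top-left $(D-d_m)\times(D-d_m)$ submatrices $X',Y'$), whereas you carry out the equivalent downward induction on the row index within the fixed matrix.
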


\begin{proof}
We proceed by induction on $m$;
the base case $m=1$ is vacuous.
Write $X = (X_{i,j})_{1 \leq i,j \leq m}$ and
$Y = (Y_{i,j})_{1 \leq i,j \leq m}$ as block-matrices,
with $X_{i,j} , Y_{i,j} \in \mathbb{M}_{d_i \times d_j} (R)$ and $X_{i,j} = 0$ for $i > j$.
We have $Y_{1,1}X_{11} = I_{d_1}$,
so by the direct finiteness hypothesis
and Lemma \ref{lem:DFsubobject},
$X_{1,1}$ is invertible.
Now, $Y_{j,1}X_{1,1}=0$ for $2 \leq j \leq m$,
so $Y_{j,1}=0$.
Thus, letting $X' , Y' \in \mathbb{M}_{D-d_1} (R)$
be the bottom-right blocks of $X$ and $Y$,
we have $X'Y'=I_{D-d_1}$.
Since $X' \in B_{\mathbf{d}'} (R)$,
where $\mathbf{d}' = (d_2,\ldots,d_m)$,
we have $Y' \in B_{\mathbf{d}'} (R)$
by inductive hypothesis, so $Y \in B_{\mathbf{d}} (R)$.
\end{proof}

\begin{lemma} \label{lem:UUTGrp}
Let $R$ be a unital ring and let $U_D (R)$ be the set of $D$-by-$D$ upper-unitriangular
matrices over $R$. Then $U_D (R)$ forms a group under matrix multiplication.
\end{lemma}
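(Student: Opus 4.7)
The plan is to verify the group axioms for $U_D(R)$ viewed as a subset of the monoid $\mathbb{M}_D(R)$ under multiplication. Associativity and the existence of an identity are inherited from $\mathbb{M}_D(R)$, since the identity matrix $I_D$ lies in $U_D(R)$, so the only non-trivial points are closure under multiplication and the existence of inverses within $U_D(R)$.

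For closure, I would observe that for $X = (x_{ij}), Y = (y_{ij}) \in U_D(R)$ the $(i,j)$ entry of $XY$ is $\sum_k x_{ik} y_{kj}$. If $i > j$, then for every $k$ either $k < i$ (giving $x_{ik} = 0$) or $k \geq i > j$ (giving $y_{kj} = 0$), so $(XY)_{ij} = 0$. If $i = j$, the only non-vanishing term is $k = i$, contributing $x_{ii} y_{ii} = 1 \cdot 1 = 1$. Hence $XY \in U_D(R)$.

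For inverses, I would write any $X \in U_D(R)$ as $X = I_D + N$, where $N$ is strictly upper triangular. The key step is to check that $N^D = 0$: by induction on $k$, one sees that $(N^k)_{ij} = 0$ whenever $j - i < k$, because each entry of $N^k$ is a sum of products $n_{i, i_1} n_{i_1, i_2} \cdots n_{i_{k-1}, j}$ along strictly increasing chains $i < i_1 < \cdots < i_{k-1} < j$. Setting $k = D$ forces $N^D = 0$ since no such chain exists. Consequently the formal geometric series terminates, so I would set
\[
Y = \sum_{k=0}^{D-1} (-1)^k N^k = I_D - N + N^2 - \cdots + (-1)^{D-1} N^{D-1}.
\]
A direct expansion gives $XY = (I_D + N) Y = I_D + (-1)^{D-1} N^D = I_D$, and by symmetry $YX = I_D$. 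Since each $N^k$ for $k \geq 1$ is strictly upper triangular (closure under multiplication, applied to the strictly upper triangular case), the matrix $Y$ is upper unitriangular, i.e., $Y \in U_D(R)$.

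There is no real obstacle here; the only point one must be careful about is that $R$ is not assumed commutative, but the arguments above only use distributivity and the fact that $1_R$ is a two-sided identity, so nothing changes in the non-commutative setting.
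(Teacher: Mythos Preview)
Your proof is correct and rests on the same underlying idea as the paper's: the strictly upper-triangular part $N = X - I_D$ is nilpotent, so $X$ has an inverse in $U_D(R)$. You write this inverse explicitly as the truncated geometric series $\sum_{k=0}^{D-1}(-1)^k N^k$; the paper instead organises the same computation inductively, introducing a filtration $U_{D,k}(R)$ of unitriangular matrices with vanishing first $k$ superdiagonals and observing that $A(2I_D - A) = I_D - (A-I_D)^2$ pushes $A$ one step up this filtration. Unwinding the paper's induction recovers exactly your formula, so the two arguments are essentially equivalent; yours is more explicit and also verifies closure and two-sidedness of the inverse directly, which the paper leaves implicit.
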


\begin{proof}
For $0 \leq k \leq (D-1)$ let $U_{D,k} (R) \subseteq U_D (R)$ be the set of matrices $A \in U_D(R)$
satisfying $A_{i,j} = 0$ for $1 \leq j-i \leq k$,
so that $U_{D,0} (R)=U_D(R)$; $U_{D,k+1} (R) \subseteq U_{D,k} (R)$
and $U_{D,D-1} (R) = \lbrace I_D \rbrace$.
We claim that every element of $U_D(R)$ has a right-inverse.
By induction on $k$, it suffices to show that, if $A \in U_{D,k}(R)$
then there exists $B \in U_{D,k}(R)$ such that $AB \in U_{D,k+1}(R)$.
Taking $B = 2 I_D -A$ suffices.
\end{proof}

\begin{lemma} \label{BlockDiagEquivLem}
Keeping the terminology from Notation \ref{BlockMxNotn},
$B_{\mathbf{d}} (R)$ is directly finite if and only if
$\mathbb{M}_{\max_i (d_i)} (R)$
is directly finite.
\end{lemma}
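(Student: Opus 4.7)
The plan is to prove the two directions separately. For the easy direction, assume $B_{\mathbf{d}}(R)$ is directly finite, and write $d = \max_i(d_i)$ with $i_0$ chosen so that $d_{i_0} = d$. There is a unital ring embedding $\iota : \mathbb{M}_d (R) \hookrightarrow B_{\mathbf{d}} (R)$ sending a matrix $A$ to the block matrix with $A$ in the $(i_0,i_0)$ block, the identity $I_{d_j}$ in every other diagonal block, and zeros off-diagonal. By Lemma \ref{lem:DFsubobject}, $\mathbb{M}_d(R)$ is then directly finite.

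For the harder direction, assume $\mathbb{M}_d(R)$ is directly finite and let $X, Y \in B_{\mathbf{d}}(R)$ satisfy $XY = I_D$. The strategy is to show that $X$ is in fact a unit of $\mathbb{M}_D(R)$; since $XY = I_D$ will then force $Y = X^{-1}$, Lemma \ref{lem:left:right} yields $YX = I_D$, as required. Writing $X = (X_{i,j})$ and $Y = (Y_{i,j})$ in block form with $X_{i,j} = Y_{i,j} = 0$ for $i>j$, comparison of diagonal blocks in $XY = I_D$ gives $X_{i,i} Y_{i,i} = I_{d_i}$. Since $d_i \leq d$, Lemma \ref{lem:DFsubobject} ensures $\mathbb{M}_{d_i}(R)$ is directly finite, so each $X_{i,i}$ is a unit. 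Consequently the block-diagonal matrix $D^* := \mathrm{diag}(X_{1,1}, \ldots, X_{m,m})$ is a unit in $B_{\mathbf{d}}(R)$.

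Now factor $X = D^* \cdot U$, where $U := I_D + (D^*)^{-1}(X - D^*)$. Since $D^*$ is block-diagonal and $X - D^*$ is strictly upper block-triangular in the partition $\mathbf{d}$, the correction $(D^*)^{-1}(X - D^*)$ is likewise strictly upper block-triangular. Hence $U$ has $1$'s on the main diagonal and $0$'s strictly below it, i.e.\ $U \in U_D(R)$; by Lemma \ref{lem:UUTGrp} it is a unit of $\mathbb{M}_D(R)$. Therefore $X = D^* U$ is a unit of $\mathbb{M}_D(R)$, which completes the argument.

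No step really qualifies as a serious obstacle: the key organisational idea is the factorisation $X = D^* U$, which cleanly separates the diagonal-block invertibility (which invokes the hypothesis on $\mathbb{M}_d(R)$) from the strictly-upper correction (which is automatically invertible via Lemma \ref{lem:UUTGrp}). Note that Lemma \ref{lem:BlockStrongLeftUnit} is not logically needed here; its content is essentially subsumed by the diagonal-block step, and the present argument is slightly stronger in that it constructs a full two-sided inverse of $X$ rather than merely constraining the shape of a one-sided inverse.
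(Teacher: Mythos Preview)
Your proof is correct and follows essentially the same approach as the paper: both directions rely on the same key ingredients (Lemma \ref{lem:DFsubobject} for the forward direction, and diagonal-block invertibility via the direct-finiteness hypothesis together with Lemma \ref{lem:UUTGrp} for the converse). The only cosmetic difference is that the paper observes $YX \in U_D(R)$ and then invokes Lemma \ref{lem:left:right}, whereas you factor $X = D^{*} U$ to exhibit a two-sided inverse of $X$ directly; the underlying mechanism is identical.
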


\begin{proof}
% For the forward direction, we can assume without loss of generality that $d_1 = \max_i (d_i)$.
% Let $A , B \in \mathbb{M}_{d_1} (R)$ with $A B = I_{d_1}$. Let:
% \begin{equation*}
% X = \left( \begin{array}{cc}
%  A & 0 \\
%  0 & I_{D-d_1}
% \end{array} \right) \text{ and }
% Y = \left( \begin{array}{cc}
%  B & 0 \\
%  0 & I_{D-d_1}
% \end{array} \right) \in B_{\mathbf{d}} (R),
% \end{equation*}
% so that $XY=I_D$.
% By direct finiteness of $B_{\mathbf{d}} (R)$,
% $YX = I_D$, so $BA = I_{d_1}$ also.

The forward direction is an application of Lemma \ref{lem:DFsubobject}.

Conversely, let $U_D (R)$ be the group of upper-unitriangular matrices over $R$
(by Lemma \ref{lem:UUTGrp} this is indeed a group);
let:
\begin{equation*}
X = \left( \begin{array}{cccc}
A_1 & \ast & \cdots & \ast \\
 0 & A_2 & \ddots & \vdots \\
 \vdots & \ddots & \ddots & \ast \\
 0 & \cdots & 0 & A_m
\end{array} \right)
\text{ and }
Y = \left( \begin{array}{cccc}
B_1 & \ast & \cdots & \ast \\
 0 & B_2 & \ddots & \vdots \\
 \vdots & \ddots & \ddots & \ast \\
 0 & \cdots & 0 & B_m
\end{array} \right)
\in B_{\mathbf{d}} (R)
\end{equation*}
(with $A_i , B_i \in \mathbb{M}_{d_i} (R)$),
and suppose that $XY = I_D$.
Then for $1 \leq i \leq m$, $A_i B_i = I_{d_i}$,
so by direct finiteness of $\mathbb{M}_{d_i} (R)$,
all $B_i A_i = I_{d_i}$ also, hence $YX \in U_D (R)$.
It follows that $Y$ has a right-inverse $X (YX)^{-1}$,
but since $X$ is a left-inverse to $Y$,
$X = X (YX)^{-1}$ by Lemma \ref{lem:left:right}.
Thus $YX = I_D$ also.
\end{proof}

%, for all $1 \leq i \leq m$,
%$B_{\mathbf{d}_i}\big( \mathbb{F}_p [\Gamma]\big)$
%is directly finite, where $\mathbf{d}_i = (d_i , \ldots , d_m) \in \mathbb{N}^{m-i+1}$.

\begin{proof}[Proof of Theorem \ref{AbBaseIffThm}]
The case in which $\Gamma$ is finite can be dealt with as in the beginning of the proof of Theorem \ref{PGrpStabFinThm}, so once again we may assume that $\Gamma$ is infinite and use the results from Section \ref{s:wreath}.

For $1 \leq i \leq m$ and $1 \leq j \leq d_i$,
let $a_{i,j} \in P$ generate the $j$th $(\mathbb{Z}/p^i\mathbb{Z})$-factor of $P$.
Let $Q \leq P$ be the set of elements of $P$ of order dividing $p$.
Then $Q$ is an elementary abelian $p$-group with basis
$\lbrace b_{i,j} : 1 \leq i \leq m , 1 \leq j \leq d_i \rbrace$,
where $b_{i,j} = p^{i-1} a_{i,j} $,
and $Q [\Gamma]$ is a free left $\mathbb{F}_p[\Gamma]$-module
of rank $D$, with free basis
$\mathcal{B} = \lbrace b_{i,j}1_{\Gamma} : 1 \leq i \leq m , 1 \leq j \leq d_i \rbrace$.
For $1 \leq i \leq m$ let $W_i \leq Q [\Gamma]$ be the left
$\mathbb{F}_p [\Gamma]$-submodule generated by
$\lbrace b_{i,j}1_{\Gamma} : 1 \leq j \leq d_i \rbrace$,
and let $V_i = W_i + W_{i+1} + \ldots + W_m \leq Q [\Gamma]$.

Suppose first that
$\mathbb{M}_{\max_i (d_i)} \big( \mathbb{F}_p [\Gamma]\big)$
is directly finite.
By Lemma \ref{BlockDiagEquivLem},
$B_{\mathbf{d}}\big( \mathbb{F}_p [\Gamma]\big)$
is directly finite.
We proceed by induction on the exponent of $P$.
The base case $\exp(P)=p$ follows from Theorem \ref{PGrpStabFinThm}.
Let $\varphi :P\wr \Gamma \to P \wr\Gamma$
be an epimorphism.
By Proposition \ref{prop:basic:morphism},
we may assume that there
is a left-$\Gamma$-equivariant epimorphism
$\psi \colon P[\Gamma] \rightarrow P[\Gamma]$
such that for all $v \in P[\Gamma]$
and $g \in \Gamma$, $\varphi(v,g)=(\psi(v),g)$ (see the proof of Theorem \ref{PGrpStabFinThm}).
The set of elements of $P[\Gamma]$ of additive order dividing $p$
is precisely $Q [\Gamma]$, so $\psi (Q[\Gamma]) \leq Q[\Gamma]$.
Thus $\varphi$ descends to a well-defined
surjective homomorphism $\overline{\varphi}$ from
$(P \wr \Gamma)/Q[\Gamma] \cong (P/Q) \wr \Gamma$
to itself.
Since $P/Q \cong (\mathbb{Z}/p\mathbb{Z})^{d_2} \oplus \cdots \oplus (\mathbb{Z}/p^{m-1}\mathbb{Z})^{d_m}$ has smaller exponent than $P$,
and by assumption $B_{\mathbf{d}_i}\big( \mathbb{F}_p [\Gamma]\big)$ is directly finite for $2 \leq i \leq m$,
$\overline{\varphi}$ is an isomorphism, by induction.
It follows that $\ker (\varphi) \leq Q [\Gamma]$.

We claim that $\psi (Q[\Gamma]) = Q[\Gamma]$.
Let $v \in Q[\Gamma]$.
Since $\varphi (P[\Gamma]) = P[\Gamma] \geq Q[\Gamma]$,
there exists $u \in P[\Gamma]$ with
$\varphi (u) = v$.
Thus $\overline{\varphi} (u + Q[\Gamma])
= v + Q[\Gamma] = 0$,
so by injectivity of $\overline{\varphi}$,
$u \in Q[\Gamma]$, as required.
Thus the restriction of $\psi$ to
$Q[\Gamma]$ is a left-$\Gamma$-equivariant
epimorphism, hence an epimorphism of left
$\mathbb{F}_p [\Gamma]$-modules.

As in the proof of Lemma \ref{lem:HopfModulesDFMatrix},
the left $\mathbb{F}_p [\Gamma]$-module homomorphism
$\psi|_{Q[\Gamma]}$ has a right-inverse
$\rho \colon Q[\Gamma] \rightarrow Q[\Gamma]$.
Let $X$ and $Y$ be the matrices of
$\psi|_{Q[\Gamma]}$ and $\rho$, respectively,
with respect to the basis $\mathcal{B}$,
so that $YX=I_D$ (note that our matrices act on the right, 
to ensure left-$\Gamma$-equivariance, so the order of matrix multiplication 
is the reverse of that for function composition). 
We claim that $X \in B_{\mathbf{d}_1}\big( \mathbb{F}_p [\Gamma]\big)$.
It will then follow from Lemma \ref{lem:BlockStrongLeftUnit}
and direct finiteness of
$B_{\mathbf{d}_1}\big( \mathbb{F}_p [\Gamma]\big)$
that $XY = I_D$ and
$\rho \circ (\psi|_{Q[\Gamma]}) = \id_{Q[\Gamma]}$,
so that $\psi|_{Q[\Gamma]}$ is injective.
Since, from the above,
$\ker (\varphi) = \ker (\psi|_{Q[\Gamma]})$,
$\varphi$ is an isomorphism, and we conclude that
$P \wr \Gamma$ is Hopfian.

It suffices then to show that $X \in B_{\mathbf{d}_1}\big( \mathbb{F}_p [\Gamma]\big)$.
In other words,
it suffices to show that for each $1 \leq i \leq m$,
$\psi(W_i) \leq V_i$.
Recall that $W_i$ is generated as an
abelian group by the elements
$b_{i,j} g = p^{i-1} a_{i,j} g$
(for $1 \leq j \leq d_i$ and $g \in \Gamma$),
which are $p^{i-1}$th powers in the group $P[\Gamma]$.
Thus $\psi (b_{i,j} g)$ is a
$p^{i-1}$th power in $P[\Gamma]$.
Since $P [\Gamma]$ is generated as an abelian
group by all elements of the form $a_{k,l} h$
(for $1 \leq k \leq m$;
$1 \leq l \leq d_k$ and $h \in \Gamma$),
and $p^{i-1} a_{k,l} h = 0$ for $k \leq i-1$,
it follows that $\psi (b_{i,j} g)$
lies in:
\begin{center}
$Q [\Gamma] \cap \big\langle p^{i-1} a_{k,l} h : k \geq i , 1 \leq l \leq d_k , h \in \Gamma \big\rangle$
\end{center}
which is precisely $V_i$, as desired.

\medskip

Conversely suppose that $P \wr \Gamma$ is Hopfian.
Then by Lemma \ref{lem:AB:Hopfian},
$(\mathbb{Z}/p^i\mathbb{Z})^{d_i} \wr \Gamma$ is Hopfian for each
$1 \leq i \leq m$.
Thus by Theorem \ref{PGrpStabFinThm},
each ring $\mathbb{M}_{d_i} \big( (\mathbb{Z}/p^i\mathbb{Z})[\Gamma] \big)$
is directly finite.
Finally, by Lemma \ref{lem:Hensel},
each ring $\mathbb{M}_{d_i} \big( \mathbb{F}_p[\Gamma] \big)$
is directly finite.
\end{proof}

We conclude our study of wreath products with abelian
bases by completing the proof of Theorem
\ref{intro:thm:main:general} from the Introduction,
the statement of which we recall here.

\begin{theorem}\label{thm:main:general}
The following are equivalent:
\begin{enumerate}
    \item For every finitely generated abelian group $A$ and every finitely generated Hopfian group $\Gamma$, the wreath product $A \wr \Gamma$ is Hopfian.
    \item Kaplansky's direct finiteness conjecture holds.
\end{enumerate}
\end{theorem}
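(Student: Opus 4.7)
The plan is to derive each direction from Theorem \ref{intro:thm:main} (the combination of Proposition \ref{prop:decomposition}, Corollary \ref{cor:solutionfreeabelian} and Theorem \ref{AbBaseIffThm}) together with Corollary \ref{cor:kaplansky:Fp}, bridging the two sides of the equivalence via a classical embedding theorem into a Hopfian group.

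For $(2) \Rightarrow (1)$, suppose Kaplansky's direct finiteness conjecture holds. By Theorem \ref{thm:df:equivalence} this is equivalent to the stable finiteness conjecture, so $\mathbb{M}_d(\mathbb{F}_p[\Gamma])$ is directly finite for every prime $p$, every $d \geq 1$ and every group $\Gamma$. When $\Gamma$ is finitely generated and Hopfian and $A$ is finitely generated abelian, every condition appearing in the characterization of Theorem \ref{intro:thm:main} is met, so $A \wr \Gamma$ is Hopfian.

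For $(1) \Rightarrow (2)$, I argue by contrapositive. Suppose the direct finiteness conjecture fails. By Theorem \ref{thm:df:char0} the failure must occur in some positive characteristic $p$, and by Theorem \ref{thm:df:equivalence} combined with Corollary \ref{cor:kaplansky:Fp} it can be witnessed over $\mathbb{F}_p$: there exist $d \geq 1$ and a group $\Lambda$ such that $\mathbb{M}_d(\mathbb{F}_p[\Lambda])$ is not directly finite. Since only finitely many elements of $\Lambda$ occur in a witnessing pair $XY = I_d$, $YX \neq I_d$, we may pass to the subgroup they generate and so assume $\Lambda$ itself is finitely generated.

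The crucial move is to replace $\Lambda$ by a finitely generated \emph{Hopfian} group while preserving the obstruction. For this I invoke the classical result that every countable group embeds into a finitely generated simple group $H$; any simple group is trivially Hopfian, since a proper self-epimorphism would have a non-trivial proper normal kernel. The inclusion $\Lambda \hookrightarrow H$ induces a unital embedding $\mathbb{M}_d(\mathbb{F}_p[\Lambda]) \hookrightarrow \mathbb{M}_d(\mathbb{F}_p[H])$, so by the contrapositive of Lemma \ref{lem:DFsubobject} the latter is again not directly finite. Setting $A = (\mathbb{Z}/p\mathbb{Z})^d$, so that $\max_i d_i^p = d$, Theorem \ref{intro:thm:main} then yields that $A \wr H$ is not Hopfian, contradicting (1). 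The only ingredient not already assembled in the excerpt is the embedding of a finitely generated group into a finitely generated Hopfian one; everything else is a mechanical combination of Theorem \ref{intro:thm:main} with the stable-finiteness reductions of Section \ref{s:stable}.
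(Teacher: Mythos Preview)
Your proof is correct and follows essentially the same route as the paper: both directions rest on Theorem \ref{intro:thm:main} together with the reductions of Section \ref{s:stable} (Theorems \ref{thm:df:equivalence}, \ref{thm:df:char0} and Corollary \ref{cor:kaplansky:Fp}), and the passage from an arbitrary group to a finitely generated Hopfian one is accomplished, exactly as in the paper, via the embedding into a finitely generated simple group. Your write-up merely unpacks the contrapositive of $(1)\Rightarrow(2)$ in more detail (reducing to a finitely generated $\Lambda$ before embedding), but the ingredients and logical structure coincide with the paper's proof.
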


\begin{proof}
Kaplansky's direct finiteness conjecture is equivalent to Kaplansky's stable finiteness conjecture by Theorem \ref{thm:df:equivalence}.
By Theorem \ref{intro:thm:main} and Corollary \ref{cor:kaplansky:Fp} the first item is equivalent to the fact that for every finitely generated Hopfian group $\Gamma$ and every field $\mathbb{F}$, the group ring $\mathbb{F}[\Gamma]$ is stably finite. By Lemma \ref{lem:DFsubobject}, it suffices to notice that every finitely generated group embeds into a finitely generated Hopfian group \cite{hopf:embedding}.
\end{proof}

\section{Beyond abelian bases}
\label{s:nonabelian}

We end by considering cases in which the base is not abelian. We start by looking at certain centreless bases, in which case the structure will be such that Hopficity is much easier to show, and in particular we will obtain examples of Hopfian wreath products $\Delta \wr \Gamma$ where $\Gamma$ is non-Hopfian (Theorem \ref{thm:nonhopfiantop}).
Secondly, we extend the results on abelian bases to nilpotent bases (Theorem \ref{thm:nilpotentbasis}).

\subsection{Centreless bases}

The proof of the following proposition is essentially contained in the literature \cite{grunberg, hall}. We include a proof for the reader's convenience. 

\begin{proposition}
\label{prop:dich}

Let $N \leq \Delta \wr \Gamma$ be a normal subgroup.
\begin{enumerate}
\item Suppose $N$ is \emph{not} basic. Then $N$ contains $\Delta'[\Gamma]$.
\item Suppose $N$ is basic, and let $K$ be the projection of $N$ onto $\Delta_{(e)}$. Then $K$ is a nontrivial normal subgroup of $\Delta$, 
and $([K, \Delta])[\Gamma] \leq N \leq K[\Gamma]$.
\end{enumerate}
\end{proposition}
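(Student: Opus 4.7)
My plan is to treat the two parts separately, both via direct commutator computations that exploit normality of $N$ under conjugation by $\Gamma$ (which permutes the coordinates of the base) and by $\Delta_{(e)}$ (which acts by $\Delta$-conjugation on the $e$-coordinate only).

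For Part (1), I would start by fixing some $(f, \gamma) \in N$ with $\gamma \neq 1_\Gamma$, which exists since $N$ is not basic. The main computation checks that for any $\delta \in \Delta$ and $g \in \Gamma$, the commutator $x_\delta := [(f, \gamma), \delta_{(g)}]$ lies in $N \cap \Delta[\Gamma]$, takes the value $\delta^{-1}$ at coordinate $g$, the value $f(\gamma g)\, \delta\, f(\gamma g)^{-1}$ at coordinate $\gamma g$, and is trivial elsewhere (using $\gamma g \neq g$). The crucial step is then to conjugate $x_{\delta'}$ by $\delta_{(g)} \in \Delta \wr \Gamma$: this modifies only the $g$-coordinate, and the product $\delta_{(g)}\, x_{\delta'}\, \delta_{(g)}^{-1}\, x_{\delta'}^{-1}$ cancels the contribution at $\gamma g$ entirely, yielding an element of $N$ supported at $g$ alone whose value is $[\delta, \delta'^{-1}]$. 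Letting $\delta, \delta'$ range over $\Delta$, this value runs over every commutator in $\Delta$, so $\Delta'_{(g)} \leq N$ for each $g \in \Gamma$, giving $\Delta'[\Gamma] \leq N$.

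For Part (2), assume $N$ is basic and non-trivial. Normality of $K$ in $\Delta$ is clear: for $f \in N$ and $\delta \in \Delta$, the conjugate $\delta_{(e)} f \delta_{(e)}^{-1} \in N$ has $e$-coordinate $\delta f(e) \delta^{-1}$. Non-triviality: if $f \in N$ satisfies $f(g_0) \neq 1_\Delta$, then $_{g_0^{-1}}f \in N$ has $e$-coordinate $f(g_0) \neq 1_\Delta$, hence $f(g_0) \in K$. The upper inclusion $N \leq K[\Gamma]$ is immediate from the same principle applied at every coordinate: every value $f(g)$ for $f \in N$ appears as the $e$-coordinate of a $\Gamma$-conjugate of $f$, hence lies in $K$. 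For the lower inclusion, given $k \in K$ with witness $f \in N$ satisfying $f(e) = k$, the commutator $[f, \delta_{(e)}] \in N$ is supported only at $e$, with value $[k, \delta]$; this shows that $[k, \delta]_{(e)} \in N$, and conjugating by $\Gamma$ translates these generators to every coordinate, giving $[K, \Delta][\Gamma] \leq N$.

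The main delicate point lies in Part (1): the element $x_\delta$ is supported on two coordinates, with the $\gamma g$-value $f(\gamma g)\, \delta\, f(\gamma g)^{-1}$ depending on $f$ in a way one cannot always avoid by varying $g$ (for instance when $\Gamma$ is finite and $f$ has full support). The trick of introducing a second copy $x_{\delta'}$ and conjugating it by $\delta_{(g)}$ to cancel the $\gamma g$-contribution, thereby isolating the clean commutator $[\delta, \delta'^{-1}]_{(g)}$, is the heart of the argument and works uniformly regardless of the size of $\Gamma$ or the support of $f$.
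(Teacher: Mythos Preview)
Your proposal is correct and follows essentially the same approach as the paper. In Part (1), the paper likewise forms the two-coordinate commutator $[(f,\gamma),\delta^{-1}_{(g)}]$ and then takes a second commutator with $\eta_{(g)}$ to isolate $[\delta,\eta]_{(g)}$; your version swaps the roles (conjugating $x_{\delta'}$ by $\delta_{(g)}$ rather than conjugating $\eta_{(g)}$ by the two-coordinate element), but the computation and the outcome are identical. In Part (2) your argument matches the paper's, and is in fact slightly more complete, since you also verify normality and non-triviality of $K$ (the latter requiring the tacit assumption that $N$ is non-trivial).
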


\begin{proof}
For the first part, suppose $N$ is not basic, 
and let $(f,\gamma) \in N$ 
with $1_{\Gamma} \neq \gamma$. 
Then for any $\delta,\eta \in \Delta$ and $g \in \Gamma$, 
\begin{equation*}
\big[ (f,\gamma),\delta^{-1}_{(g)} \big] 
= \big( f(\gamma g) \delta^{-1} f(\gamma g)^{-1} \big)_{(\gamma g)} \cdot \delta_{(g)} \in N
\end{equation*}
so: 
\begin{equation*}
\big[ [(f,\gamma),\delta^{-1}_{(g)}],\eta_{(g)} \big] 
= [\delta,\eta]_{(g)} \in N
\end{equation*}
and the set of all elements of the form $[\delta,\eta]_{(g)}$ 
generates $\Delta'[\Gamma]$. 

For the second part, since for any $g \in \Gamma$ we have $N = g N g^{-1}$,
the projection of $N$ to $\Delta_{(g)}$ 
is also equal to $K$. 
Thus $N \leq K[\Gamma]$. For the other inclusion, 
given $k \in K$ and $g \in \Gamma$, 
let $f \in \Delta[\Gamma]$ with $f(g) = k$ 
(such $f$ exists by the above). 
Then for any $\delta \in \Delta$, 
$[ f,\delta_{(g)} ] = [k,\delta]_{(g)} \in N$. 
\end{proof}

Recall that a group $\Gamma$ is called 
\emph{just-non-solvable} if $\Gamma$ is not solvable, 
but every proper quotient of $\Gamma$ is solvable. 
For instance every non-abelian simple group is 
just-non-solvable. 
Thompson's group $F$ is an example of a finitely generated just-non-solvable group which is not simple, 
as $F'$ is infinite simple and contained in every 
nontrivial normal subgroup of $F$ \cite{thompson}. 
Just-infinite $p$-torsion groups, such as Grigorchuk's groups and the Gupta-Sidki $p$-groups, 
are further examples, as is the Basilica group \cite{basilica}. 

\begin{proposition} \label{prop:JNSWP}
If $\Gamma$ is solvable and $\Delta$ is 
just-non-solvable, 
then $\Delta \wr \Gamma$ is just-non-solvable. 
\end{proposition}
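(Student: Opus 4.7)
The plan is to verify the two defining properties of a just-non-solvable group: that $\Delta \wr \Gamma$ is not solvable, and that every proper quotient of it is solvable. The first property is immediate: $\Delta \wr \Gamma$ contains a copy of $\Delta$ (for instance, $\Delta_{(1_\Gamma)}$), so as $\Delta$ is not solvable, neither is $\Delta \wr \Gamma$.

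For the second property, I would take a nontrivial normal subgroup $N \trianglelefteq \Delta \wr \Gamma$ and split into the two cases provided by Proposition \ref{prop:dich}. If $N$ is not basic, then $\Delta'[\Gamma] \leq N$, so $(\Delta \wr \Gamma)/N$ is a quotient of $(\Delta \wr \Gamma)/\Delta'[\Gamma] \cong (\Delta/\Delta') \wr \Gamma$. The latter is an abelian group wreathed with a solvable group, and such a wreath product is solvable (its base group is abelian and the quotient $\Gamma$ is solvable), so $(\Delta \wr \Gamma)/N$ is solvable.

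If instead $N$ is basic, let $K$ be the projection of $N$ onto $\Delta_{(1_\Gamma)}$, which by Proposition \ref{prop:dich} is a nontrivial normal subgroup of $\Delta$ with $[K,\Delta][\Gamma] \leq N$. Since $K \neq 1$ and $\Delta$ is just-non-solvable, $\Delta/K$ is solvable. The key observation is that $\Delta/[K,\Delta]$ is itself solvable: the image of $K$ in $\Delta/[K,\Delta]$ is central (hence abelian), and its quotient is $\Delta/K$, so $\Delta/[K,\Delta]$ is an abelian extension of a solvable group. Consequently $(\Delta/[K,\Delta]) \wr \Gamma$ is solvable, and $(\Delta \wr \Gamma)/N$, being a quotient of this wreath product, is solvable.

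The main delicate point is the basic case: one cannot quotient by $K[\Gamma]$ directly (this subgroup need not be contained in $N$), so we are forced to use the smaller subgroup $[K,\Delta][\Gamma]$, and then we must promote the merely normal subgroup $K/[K,\Delta]$ of $\Delta/[K,\Delta]$ to a central one in order to conclude solvability via an abelian-by-solvable extension. Once this centrality observation is in place, the rest reduces to the elementary fact that wreath products of solvable groups are solvable.
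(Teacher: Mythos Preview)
Your proof is correct and follows essentially the same route as the paper: both invoke Proposition~\ref{prop:dich} to find, for any nontrivial $N \trianglelefteq \Delta \wr \Gamma$, a normal subgroup $L \trianglelefteq \Delta$ (namely $\Delta'$ or $[K,\Delta]$) with $L[\Gamma] \leq N$, and then observe that $(\Delta/L)\wr\Gamma$ is solvable. The only presentational difference is that the paper first records that $\Delta$ is centreless (since a central-by-solvable group is solvable) and then uses just-non-solvability to conclude that $\Delta/L$ is solvable for the nontrivial $L$, whereas you bypass the centrelessness remark and argue directly that $\Delta/[K,\Delta]$ is central-by-solvable, hence solvable---this is the same observation in a slightly different packaging.
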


\begin{proof}
A central extension of a solvable group is solvable, 
so $\Delta$ is centreless. 
It follows that $\Delta '$ and $[K,\Delta]$ 
are nontrivial normal subgroups of $\Delta$ 
for every nontrivial $K \vartriangleleft \Delta$. 
By Proposition \ref{prop:dich}, 
for any nontrivial $N \vartriangleleft \Delta \wr \Gamma$ 
there exists a nontrivial $L \vartriangleleft \Delta$ 
such that $L[\Gamma] \leq N$. 
Thus $(\Delta \wr \Gamma)/N$ is a quotient of 
$(\Delta \wr \Gamma)/L[\Gamma] \cong (\Delta/L \wr \Gamma)$, 
which is solvable, since $\Delta/L$ and $\Gamma$ are. 
\end{proof}

Using Proposition \ref{prop:JNSWP}, 
we complete the proof of Theorem \ref{intro:thm:nonhopfiantop},
the statement of which we recall.

\begin{theorem}
\label{thm:nonhopfiantop}

There exist finitely generated groups 
$\Delta$ and $\Gamma$, with $\Gamma$ non-Hopfian, 
such that $\Delta \wr \Gamma$ is Hopfian. 
\end{theorem}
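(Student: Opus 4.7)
The plan is to combine Proposition \ref{prop:JNSWP} with the elementary observation that every just-non-solvable group is Hopfian. First, I would select $\Delta$ to be a finitely generated just-non-solvable group. Several candidates were mentioned just before Proposition \ref{prop:JNSWP}: Grigorchuk's first group, the Gupta--Sidki $p$-groups, the Basilica group, Thompson's group $F$, or (simplest of all) any finitely generated non-abelian simple group. Next, I would take $\Gamma$ to be a finitely generated non-Hopfian solvable group; the existence of such groups is classical, going back to work of P.\ Hall on finitely generated metabelian groups. Since any such $\Gamma$ will do, there is no delicate choice to make here.

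With these choices, Proposition \ref{prop:JNSWP} immediately yields that $\Delta \wr \Gamma$ is just-non-solvable. It remains to observe that any just-non-solvable group $G$ is automatically Hopfian: if $\varphi \colon G \to G$ is a surjective endomorphism with non-trivial kernel $K$, then $G/K$ is a \emph{proper} quotient of $G$, and hence solvable by just-non-solvability. But $G/K \cong \varphi(G) = G$ is not solvable, a contradiction. Thus $K$ is trivial and $\varphi$ is an isomorphism. Applied to $G = \Delta \wr \Gamma$, this shows that $\Delta \wr \Gamma$ is Hopfian, while $\Gamma$ was chosen non-Hopfian.

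There is essentially no obstacle once Proposition \ref{prop:JNSWP} is available: the whole argument is a one-line application of just-non-solvability to self-epimorphisms. The only ``external'' input required is to name a finitely generated non-Hopfian solvable group, for which a citation to Hall's work suffices. The conceptual content of the theorem therefore lies entirely in Proposition \ref{prop:JNSWP}, which in turn rests on Proposition \ref{prop:dich}: normal subgroups of $\Delta \wr \Gamma$ are forced to be either basic and large, or to contain $\Delta'[\Gamma]$, so that quotients of $\Delta \wr \Gamma$ collapse to quotients of $(\Delta/L) \wr \Gamma$ for some non-trivial $L \trianglelefteq \Delta$; when $\Delta$ is just-non-solvable and $\Gamma$ is solvable, these quotients are automatically solvable, giving the just-non-solvability of the wreath product. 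This mirrors Gruenberg's criterion for non-residual finiteness of wreath products, as the authors note.
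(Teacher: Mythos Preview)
Your proposal is correct and follows exactly the paper's approach: choose $\Delta$ finitely generated just-non-solvable and $\Gamma$ finitely generated solvable non-Hopfian, apply Proposition \ref{prop:JNSWP}, and conclude since just-non-solvable groups are Hopfian. One small slip: finitely generated \emph{metabelian} groups are residually finite (Hall) and hence Hopfian, so your citation should instead point to a solvable group of higher derived length, such as the Abels group used in the paper.
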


\begin{proof}
Let $\Delta$ be any finitely generated just-non-solvable 
group (for instance a finite nonabelian simple group). 
Let $\Gamma$ be any finitely generated 
non-Hopfian solvable group 
(for instance the Abels group \cite{Abels}). 
Then by Proposition \ref{prop:JNSWP}, 
$\Delta \wr \Gamma$ is just non-solvable. 
The result follows, 
since every just non-solvable group is Hopfian. 
\end{proof}

\begin{remark}
\label{rem:justnonP}

The previous proof admits a far-reaching generalization. 
Let $\mathcal{P}$ be a property of groups such that: 
\begin{enumerate}
\item If $\Gamma$ has $\mathcal{P}$, 
then so does every subgroup of $\Gamma$; 

\item If $\Gamma$ and $\Delta$ are groups with 
$\mathcal{P}$, then $\Delta \wr \Gamma$ 
has $\mathcal{P}$. 
\end{enumerate}

Then $\Delta \wr \Gamma$ is just-non-$\mathcal{P}$ 
whenever $\Gamma$ has $\mathcal{P}$ and 
$\Delta$ is just-non-$\mathcal{P}$ and centreless
(note that if a central extension of a group with $\mathcal{P}$ has $\mathcal{P}$, then just-non-$\mathcal{P}$ groups are automatically centreless;
this was the case for solvability). 
In particular this implies that $\Delta \wr \Gamma$ 
is Hopfian. 
Examples of properties $\mathcal{P}$ satisfying the above include ``finite''; ``amenable''; 
``torsion-free''; ``torsion''; ``left orderable''; and ``sofic'' (by \cite{wreathsofic}).
\end{remark}

\subsection{Nilpotent bases}

In this subsection, we extend the case of abelian bases to all nilpotent bases:

\begin{theorem}
\label{thm:nilpotentbasis}

Let $\Delta$ be a finitely generated nilpotent group, and let $\{ Z_i \}_{i = 0}^c$ be the upper central series of $\Delta$, so that $Z_1 = Z(\Delta)$.
Let $\Gamma$ be a finitely generated group, and suppose that $(Z_i / Z_{i-1}) \wr \Gamma$ is Hopfian for all $i = 1,\ldots,c$.
Then $\Delta \wr \Gamma$ is Hopfian.
\end{theorem}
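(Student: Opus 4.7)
The plan is to proceed by induction on the nilpotency class $c$ of $\Delta$. The base case $c = 1$ (when $\Delta = Z_1$ is abelian) reduces to the hypothesis that $Z_1 \wr \Gamma = \Delta \wr \Gamma$ is Hopfian. For the inductive step with $c \geq 2$, set $Z := Z_1 = Z(\Delta)$, so that $\bar\Delta := \Delta/Z$ is nilpotent of class $c-1$ with upper central series $\{Z_i/Z\}_{i=1}^c$ whose consecutive factors are isomorphic to $Z_i/Z_{i-1}$; the inductive hypothesis then yields that $\bar\Delta \wr \Gamma$ is Hopfian. The $i=1$ instance of the main hypothesis gives that $Z \wr \Gamma$ is Hopfian, and since $Z$ is a nontrivial abelian group Lemma \ref{lem:G:Hopfian} forces $\Gamma$ itself to be Hopfian. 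We may also reduce to $\Gamma$ infinite, since otherwise $\Delta \wr \Gamma$ is finitely generated virtually nilpotent and thus residually finite.

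Let $\varphi: \Delta \wr \Gamma \to \Delta \wr \Gamma$ be a surjective endomorphism. The first step is to establish a non-abelian analogue of Proposition \ref{prop:morphismsarebasic}: namely, $\varphi$ is \emph{basic}, in the sense that $\varphi(\Delta[\Gamma]) \leq \Delta[\Gamma]$. By the dichotomy of Proposition \ref{prop:dich}, the contrary would force $\varphi(\Delta[\Gamma])$ to contain $\Delta'[\Gamma]$; we would aim to contradict this by composing with the partial abelianisation quotient $q: \Delta \wr \Gamma \twoheadrightarrow \Delta^{ab} \wr \Gamma$, invoking the abelian case of Proposition \ref{prop:morphismsarebasic} together with Hopficity of the finitely generated abelian group $(\Delta \wr \Gamma)^{ab} \cong \Delta^{ab} \times \Gamma^{ab}$.

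Granting that $\varphi$ is basic, Lemma \ref{lem:epi:semi} applied to $\Delta \wr \Gamma = \Delta[\Gamma] \rtimes \Gamma$ yields $\varphi(\Delta[\Gamma]) = \Delta[\Gamma]$, $\ker\varphi \leq \Delta[\Gamma]$, and an automorphism $\alpha \in \Aut(\Gamma)$ induced by $\varphi$ modulo the base. Since $\varphi|_{\Delta[\Gamma]}$ is a surjective self-endomorphism of $\Delta[\Gamma]$, it sends the centre into itself: $\varphi(Z[\Gamma]) \leq Z(\Delta[\Gamma]) = Z[\Gamma]$. Hence $\varphi$ descends to a surjective endomorphism $\bar\varphi: \bar\Delta \wr \Gamma \to \bar\Delta \wr \Gamma$, which is an isomorphism by the inductive hypothesis, so that $\varphi^{-1}(Z[\Gamma]) = Z[\Gamma]$ and $\ker\varphi \leq Z[\Gamma]$. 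To show $\varphi|_{Z[\Gamma]}$ is injective, we would define $\psi: Z \wr \Gamma \to Z \wr \Gamma$ by $\psi(f, \gamma) := (\varphi(f), \alpha(\gamma))$; the only nontrivial verification is the conjugacy relation $\varphi({_\gamma}f) = {_{\alpha(\gamma)}}\varphi(f)$ for $f \in Z[\Gamma]$, which follows from the identity $\varphi({_\gamma}f) = b(\gamma) \cdot {_{\alpha(\gamma)}}\varphi(f) \cdot b(\gamma)^{-1}$ implicit in Lemma \ref{lem:epi:semi} combined with the fact that $\varphi(f) \in Z[\Gamma]$ is central in $\Delta[\Gamma]$, so the outer conjugation by $b(\gamma) \in \Delta[\Gamma]$ collapses. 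Thus $\psi$ is a surjective endomorphism of the Hopfian group $Z \wr \Gamma$, hence an isomorphism, so $\ker\varphi \cap Z[\Gamma] = 1$ and $\varphi$ is injective.

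The decisive hurdle is the basic reduction in the second paragraph. The abelian-base proof of Proposition \ref{prop:morphismsarebasic} pivots on the structural Lemma \ref{lem:abeliannormal}, whose commutator computation uses abelianness of the base in an essential way and does not directly extend to nilpotent $\Delta$. For nilpotent $\Delta$ the plan is to exploit the characteristic chain $\Delta'[\Gamma] \trianglelefteq \Delta[\Gamma] \trianglelefteq \Delta \wr \Gamma$ in tandem with Proposition \ref{prop:dich}, reducing the obstructive case to an instance over the abelian-base wreath product $\Delta^{ab} \wr \Gamma$ where Proposition \ref{prop:morphismsarebasic} applies; the technical work is in ensuring that this reduction is compatible with the inductive hypothesis on $\bar\Delta \wr \Gamma$.
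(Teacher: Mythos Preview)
Your third paragraph---the inductive step once $\varphi$ is known to be basic---is correct and matches the content of the paper's Proposition \ref{prop:centre:hopfian}: the observation that a surjective endomorphism of $\Delta[\Gamma]$ sends $Z(\Delta[\Gamma])=Z[\Gamma]$ into itself, the descent to $\bar\Delta\wr\Gamma$, and the construction of $\psi$ on $Z\wr\Gamma$ are all sound.

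The gap is exactly where you flag it: the basic reduction. Your proposed route through the abelianisation does not go through as stated. From Proposition \ref{prop:dich} you obtain $\Delta'[\Gamma]\leq N:=\varphi(\Delta[\Gamma])$ in the non-basic case, and you would like to analyse $\bar N:=N/\Delta'[\Gamma]$ inside $\Delta^{\mathrm{ab}}\wr\Gamma$ using Lemma \ref{lem:abeliannormal}. But $\bar N$ need not be abelian: its derived subgroup is $N'/\Delta'[\Gamma]=\varphi(\Delta'[\Gamma])/\Delta'[\Gamma]$, and you have no control over $\varphi(\Delta'[\Gamma])$---the subgroup $\Delta'[\Gamma]$ is \emph{not} characteristic in $\Delta\wr\Gamma$ (it is strictly smaller than $(\Delta\wr\Gamma)'\cap\Delta[\Gamma]$). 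For the same reason $\varphi$ need not descend to a self-epimorphism of $\Delta^{\mathrm{ab}}\wr\Gamma$, so Proposition \ref{prop:morphismsarebasic} cannot be invoked. Passing to $(\Delta\wr\Gamma)^{\mathrm{ab}}\cong\Delta^{\mathrm{ab}}\times\Gamma^{\mathrm{ab}}$ only tells you that the image of $N$ there is isomorphic to $\Delta^{\mathrm{ab}}$, which is not a contradiction.

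The paper avoids this obstacle by reversing the order of the two reductions. It first proves, \emph{without} assuming $\varphi$ basic, that $\varphi(Z[\Gamma])\leq Z[\Gamma]$ (Lemma \ref{lem:centre:basic}): a commutator computation shows that $\varphi(\Delta[\Gamma])\cap\Delta[\Gamma]$ always projects onto $\Delta_{(e)}$, and the same computation applied to $\varphi(Z[\Gamma])$ would force an abelian subgroup to project onto the non-abelian $\Delta$ if $\varphi(Z[\Gamma])$ were non-basic. Having secured $\varphi(Z[\Gamma])\leq Z[\Gamma]$, the paper \emph{strengthens the inductive hypothesis} to ``$\bar\Delta\wr\Gamma$ is Hopfian and every automorphism is basic''; then the induced automorphism of $\bar\Delta\wr\Gamma$ is basic by induction, and one pulls this back to conclude that $\varphi$ itself is basic. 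Your argument, by contrast, tries to establish basicness first and then get $\varphi(Z[\Gamma])\leq Z[\Gamma]$ as a corollary; this order does not seem to work without essentially reproducing Lemma \ref{lem:centre:basic}.
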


\begin{corollary}\label{cor:nilpotentbasis}
The following are equivalent:
\begin{enumerate}
    \item For every finitely generated nilpotent group $\Delta$ and every finitely generated Hopfian group $\Gamma$, the wreath product $\Delta \wr \Gamma$ is Hopfian.
    \item Kaplansky's stable finiteness conjecture holds.
\end{enumerate}
\end{corollary}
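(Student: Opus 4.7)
The plan is to deduce the corollary as a direct consequence of Theorem \ref{thm:nilpotentbasis} combined with Theorem \ref{thm:main:general}, which characterises Kaplansky's stable finiteness conjecture in terms of Hopficity of wreath products with finitely generated abelian bases. No new ideas are required beyond these two results and standard facts about finitely generated nilpotent groups, so the proof will be quite short.

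For the implication (1) $\Rightarrow$ (2), I would observe that every finitely generated abelian group is nilpotent (of class $1$), so specialising (1) to abelian $\Delta$ yields exactly condition (1) of Theorem \ref{thm:main:general}. That condition is equivalent to Kaplansky's direct finiteness conjecture, which in turn is equivalent to the stable finiteness conjecture by Theorem \ref{thm:df:equivalence}.

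For the implication (2) $\Rightarrow$ (1), let $\Delta$ be a finitely generated nilpotent group with upper central series $\{Z_i\}_{i=0}^c$, and let $\Gamma$ be a finitely generated Hopfian group. The strategy is to verify the hypotheses of Theorem \ref{thm:nilpotentbasis}, namely that $(Z_i/Z_{i-1}) \wr \Gamma$ is Hopfian for every $i$. Since finitely generated nilpotent groups are polycyclic and hence Noetherian, each $Z_i$ is itself finitely generated, so each quotient $Z_i/Z_{i-1}$ is a finitely generated abelian group. Assuming Kaplansky's stable finiteness conjecture, Theorem \ref{thm:main:general} gives that $(Z_i/Z_{i-1}) \wr \Gamma$ is Hopfian for each $i$, and Theorem \ref{thm:nilpotentbasis} then concludes that $\Delta \wr \Gamma$ is Hopfian.

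The only point worth flagging is the finite generation of the central factors $Z_i/Z_{i-1}$; this is classical for finitely generated nilpotent groups and can simply be cited. Beyond this mild bookkeeping, there is no substantive obstacle: the corollary amounts to stitching Theorems \ref{thm:nilpotentbasis} and \ref{thm:main:general} together, with the observation that abelian is a special case of nilpotent supplying the converse.
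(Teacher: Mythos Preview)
Your proposal is correct and follows essentially the same approach as the paper: combine Theorem \ref{thm:nilpotentbasis} with Theorem \ref{thm:main:general} (equivalently Theorem \ref{intro:thm:main:general}), using that abelian groups are nilpotent for one direction and that subgroups of finitely generated nilpotent groups are finitely generated for the other. The only point the paper singles out is exactly the one you flag, namely the finite generation of the terms of the upper central series.
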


\begin{proof}
This follows by combining Theorem \ref{intro:thm:main:general} with Theorem \ref{thm:nilpotentbasis}. Note that in order to apply Theorem \ref{intro:thm:main:general} to the terms of the upper central series, we are using the fact that every subgroup of $\Delta$ is finitely generated \cite[Section 15.3]{robinson}.
\end{proof}

The proof of Theorem \ref{thm:nilpotentbasis} will be by induction on $c$. 
The induction step is of independent interest.

\begin{lemma}
\label{lem:centre:basic}

Let $\Delta$ be a non-abelian group, and let $Z \coloneqq Z(\Delta)$ be its centre. Let $\Gamma$ be a Hopfian group. Then every epimorphism $\varphi \colon \Delta \wr \Gamma \to \Delta \wr \Gamma$ satisfies $\varphi(Z[\Gamma]) \leq Z[\Gamma]$.
\end{lemma}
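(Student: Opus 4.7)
My plan is to compare the two images $N := \varphi(Z[\Gamma])$ and $N_0 := \varphi(\Delta[\Gamma])$, both normal in $\Delta \wr \Gamma$ (the former because $Z[\Gamma]$, as the centre of $\Delta[\Gamma]$, is characteristic in the normal subgroup $\Delta[\Gamma]$ and hence normal in $\Delta \wr \Gamma$; the latter because $\varphi$ is surjective). Note that $N$ is abelian, and the restriction $\varphi|_{\Delta[\Gamma]}$ is a surjection of $\Delta[\Gamma]$ onto $N_0$ sending the centre of $\Delta[\Gamma]$, namely $Z[\Gamma]$, into the centre of $N_0$; hence $N \leq Z(N_0)$. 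I would then split on whether $N_0$ is basic, using Proposition \ref{prop:dich}.

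If $N_0$ is basic, that is $\varphi(\Delta[\Gamma]) \leq \Delta[\Gamma]$, then Lemma \ref{lem:epi:semi} applied with $\Lambda = \Delta[\Gamma]$ (using that $\Gamma$ is Hopfian) upgrades this to $\varphi(\Delta[\Gamma]) = \Delta[\Gamma]$. Then $\varphi|_{\Delta[\Gamma]}$ is a surjective self-endomorphism of $\Delta[\Gamma]$, and any such map sends the centre $Z(\Delta[\Gamma]) = Z[\Gamma]$ into itself, giving $\varphi(Z[\Gamma]) \leq Z[\Gamma]$.

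If instead $N_0$ is not basic, Proposition \ref{prop:dich}(1) supplies some $(f_0, \gamma_0) \in N_0$ with $\gamma_0 \neq 1_\Gamma$. I would first argue that $N$ itself must be basic: otherwise the same proposition gives some $(f, \gamma) \in N$ with $\gamma \neq 1_\Gamma$ together with $\Delta'[\Gamma] \leq N$. Since $\Delta$ is non-abelian, pick $\xi \in \Delta' \setminus \{1\}$ and any $g \in \Gamma$. The standard conjugation identity
$$(f, \gamma)\, \xi_{(g)}\, (f, \gamma)^{-1} = \bigl(f(\gamma g)\, \xi\, f(\gamma g)^{-1}\bigr)_{(\gamma g)}$$
yields an element supported at $\gamma g \neq g$, so $(f, \gamma)$ cannot commute with $\xi_{(g)} \in N$, contradicting $N$ abelian. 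Hence $N$ is basic, and Proposition \ref{prop:dich}(2) gives $N \leq K[\Gamma]$ with $[K, \Delta][\Gamma] \leq N$. The inclusion $N \leq Z(N_0)$ means that $(f_0, \gamma_0)$ centralizes every $\xi_{(g)}$ with $\xi \in [K, \Delta]$; applying the same conjugation identity with $(f_0, \gamma_0)$ in place of $(f, \gamma)$ then forces $[K, \Delta] = \{1\}$, i.e.\ $K \leq Z(\Delta) = Z$, and therefore $N \leq K[\Gamma] \leq Z[\Gamma]$.

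The main obstacle will be the case $N_0$ not basic, where $\varphi$ fails to restrict to the base subgroup and one cannot simply transport centres along $\varphi|_{\Delta[\Gamma]}$. The workhorse in that case is the conjugation identity above, which says that any element with a nontrivial $\Gamma$-component shifts the support of a singly-supported base element and hence cannot centralize it. I would use it twice: first to force $N$ itself to be basic (against $\Delta'[\Gamma] \leq N$), and then to force $[K, \Delta]$ to be trivial (against the presence of $(f_0, \gamma_0)$ in $N_0 \supseteq$ centralizer of $N$).
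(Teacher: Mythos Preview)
Your proof is correct. The core ingredients are the same as the paper's: the conjugation identity $(f,\gamma)\,\xi_{(g)}\,(f,\gamma)^{-1} = (f(\gamma g)\xi f(\gamma g)^{-1})_{(\gamma g)}$, the fact that a surjective homomorphism sends the centre into the centre, and (in the basic case) the appeal to Lemma~\ref{lem:epi:semi} via Hopficity of $\Gamma$.

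The organization, however, differs. The paper first proves a single unified claim --- that $\varphi(\Delta[\Gamma])\cap\Delta[\Gamma]$ projects \emph{onto} $\Delta_{(e)}$ in both the basic and non-basic cases --- and then uses this twice: once (applied to $Z$ in place of $\Delta$) to force $\varphi(Z[\Gamma])$ basic, and once to conclude that the projection of $\varphi(Z[\Gamma])$ to $\Delta_{(e)}$ commutes with all of $\Delta$. You instead split on whether $N_0 = \varphi(\Delta[\Gamma])$ is basic, and in the non-basic case you route the argument through Proposition~\ref{prop:dich}: part (1) gives $\Delta'[\Gamma]\leq N$ to contradict abelianness if $N$ were non-basic, and part (2) gives $[K,\Delta][\Gamma]\leq N\leq K[\Gamma]$, after which the single non-basic witness $(f_0,\gamma_0)\in N_0$ forces $[K,\Delta]=1$. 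Your packaging via Proposition~\ref{prop:dich} is perhaps cleaner to read, at the cost of being slightly more indirect in the final step; the paper's ``projection onto $\Delta_{(e)}$'' viewpoint is more uniform across the two cases.
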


\begin{proof}
We start by showing that $\varphi(\Delta[\Gamma]) \cap \Delta[\Gamma]$ has a surjective projection onto $\Delta_{(e)}$. In case $\varphi$ is basic, this follows from Lemma \ref{lem:epi:semi}. Otherwise let $(f, \gamma) \in \varphi(\Delta[\Gamma])$ be such that $\gamma \neq 1_{\Gamma}$. We then argue as in Lemma \ref{lem:abeliannormal}: Since $\varphi(\Delta[\Gamma])$ is normal (because $\varphi$ is surjective), it must also contain, for any $\delta \in \Delta$, 
$[(f, \gamma), (\delta^{-1})_{(e)}] = {_f}(\delta^{-1})_{(\gamma)} \cdot \delta_{(e)}$, 
whose projection onto $\Delta_{(e)}$ is $\delta$.

In fact, the above argument shows that if $\Lambda \leq \Delta$ is a normal subgroup such that $\varphi(\Lambda[\Gamma])$ is non-basic, then $\varphi(\Lambda[\Gamma]) \cap \Delta[\Gamma]$ has a surjective projection onto $\Delta_{(e)}$. Since $\Delta_{(e)} \cong \Delta$ is non-abelian, this implies that $\varphi(Z[\Gamma]) \leq \Delta[\Gamma]$.

Now, $\varphi(Z[\Gamma])$ and $\varphi(\Delta[\Gamma]) \cap \Delta[\Gamma]$ 
commute elementwise, and since the projection of the latter to $\Delta_{(e)}$ 
is surjective, it follows that the projection of $\varphi(Z[\Gamma])$ to $\Delta_{(e)}$ 
has image contained in $Z_{(e)}$. 
Finally, for every $\gamma \in \Gamma$, $\Delta_{(\gamma)}$ 
is conjugate to $\Delta_{(e)}$ in $\Delta \wr \Gamma$, 
so by normality of $\varphi(Z[\Gamma])$ in $\Delta \wr \Gamma$ 
we have that that the projection of $\varphi(Z[\Gamma])$ to $\Delta_{(\gamma)}$ 
has image contained in $Z_{(\gamma)}$. 
Thus $\varphi(Z[\Gamma]) \leq Z[\Gamma]$. 
\end{proof}

\begin{proposition}
\label{prop:centre:hopfian}

Let $\Delta, \Gamma$ be groups, with $\Gamma$ finitely generated and infinite, and let $Z \coloneqq Z(\Delta)$. Suppose that $Z \wr \Gamma$ is Hopfian, and that $(\Delta / Z) \wr \Gamma$ is Hopfian, and every automorphism of $(\Delta / Z) \wr \Gamma$ is basic. Then $\Delta \wr \Gamma$ is Hopfian, and every automorphism of $\Delta \wr \Gamma$ is basic.
\end{proposition}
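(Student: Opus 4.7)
The plan is to play $Z \wr \Gamma$ and $(\Delta/Z) \wr \Gamma$ off of one another via the short exact sequence $1 \to Z[\Gamma] \to \Delta \wr \Gamma \to (\Delta/Z) \wr \Gamma \to 1$, using Lemma \ref{lem:centre:basic} as the essential input: it guarantees that any self-epimorphism $\varphi$ satisfies $\varphi(Z[\Gamma]) \leq Z[\Gamma]$, so $\varphi$ descends to the quotient.

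First I would dispose of degenerate cases. If $\Delta$ is abelian then $Z = \Delta$ and $\Delta \wr \Gamma = Z \wr \Gamma$, so both conclusions follow from the hypotheses on $Z$ (together with Proposition \ref{prop:morphismsarebasic} if needed for the basicity of automorphisms). If $Z$ is trivial then $(\Delta/Z) \wr \Gamma = \Delta \wr \Gamma$ and the conclusions are tautological. Henceforth assume $\Delta$ is non-abelian and $Z$ is a non-trivial abelian group; Lemma \ref{lem:G:Hopfian} applied to $Z \wr \Gamma$ then gives that $\Gamma$ itself is Hopfian, which is what enables Lemma \ref{lem:centre:basic}.

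Given an epimorphism $\varphi : \Delta \wr \Gamma \to \Delta \wr \Gamma$, Lemma \ref{lem:centre:basic} produces a descended epimorphism $\overline{\varphi} : (\Delta/Z) \wr \Gamma \to (\Delta/Z) \wr \Gamma$ which, by hypothesis, is both an isomorphism and basic. Pulling the basicity back through the quotient immediately gives $\varphi(\Delta[\Gamma]) \leq \Delta[\Gamma]$, so $\varphi$ is basic; then Lemma \ref{lem:epi:semi} applied to $\Delta \wr \Gamma \cong \Delta[\Gamma] \rtimes \Gamma$ upgrades this to $\varphi(\Delta[\Gamma]) = \Delta[\Gamma]$ with $\ker(\varphi) \leq \Delta[\Gamma]$. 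The injectivity of $\overline{\varphi}$ further localises the kernel into $Z[\Gamma]$, and a short diagram chase on $0 \to Z[\Gamma] \to \Delta[\Gamma] \to (\Delta/Z)[\Gamma] \to 0$ shows that $\varphi|_{Z[\Gamma]}$ is surjective with kernel equal to $\ker(\varphi)$.

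The remaining task is to show this restriction is injective, and the plan is to lift it to a self-epimorphism of $Z \wr \Gamma$ and invoke Hopficity. Writing $\varphi(g) = (f_g, \alpha(g))$ for $g \in \Gamma$, the assignment $\alpha : \Gamma \to \Gamma$ is a group homomorphism, surjective (since the projection $\Delta \wr \Gamma \to \Gamma$ is), and hence an automorphism of $\Gamma$ by Hopficity. A direct conjugation computation, using that ${_{\alpha(g)}z} \in Z[\Gamma]$ commutes with $f_g$, shows that $\varphi({_g}z) = {_{\alpha(g)}}\varphi(z)$ for all $z \in Z[\Gamma]$. Thus $\widetilde{\varphi}(z, g) := (\varphi(z), \alpha(g))$ defines a self-epimorphism of $Z \wr \Gamma$, and Hopficity of the latter forces $\widetilde{\varphi}$, and in particular $\varphi|_{Z[\Gamma]}$, to be injective. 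The most delicate step is this last construction: the twist by $\alpha$ cannot be dropped, and the verification that $\widetilde{\varphi}$ is actually a homomorphism uses the centrality of $Z[\Gamma]$ in $\Delta[\Gamma]$ in an essential way.
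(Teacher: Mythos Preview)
Your proof is correct and follows essentially the same route as the paper's. Both arguments use Lemma~\ref{lem:centre:basic} to get $\varphi(Z[\Gamma]) \leq Z[\Gamma]$, descend to $(\Delta/Z) \wr \Gamma$ to deduce that $\varphi$ is basic with $\ker(\varphi) \leq Z[\Gamma]$, and then build the auxiliary self-epimorphism of $Z \wr \Gamma$ (the paper's $\psi$, your $\widetilde{\varphi}$) out of $\varphi|_{Z[\Gamma]}$ and the induced automorphism $\alpha$ of $\Gamma$, invoking Hopficity of $Z \wr \Gamma$ to finish. The only cosmetic difference is ordering: the paper first shows $\varphi(Z[\Gamma]) = Z[\Gamma]$ via the quotient diagram and then reads off basicity, whereas you first pull back basicity and then obtain $\varphi(Z[\Gamma]) = Z[\Gamma]$ from Lemma~\ref{lem:epi:semi} and a diagram chase; your explicit verification that $\Gamma$ is Hopfian (via Lemma~\ref{lem:G:Hopfian}, needed to invoke Lemma~\ref{lem:centre:basic}) is a point the paper leaves implicit.
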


\begin{proof}
Let $\varphi \colon \Delta \wr \Gamma \to \Delta \wr \Gamma$ be an epimorphism. If $\Delta$ is abelian, then $\Delta = Z$, and we are done by Proposition \ref{prop:morphismsarebasic}. Otherwise we can apply Lemma \ref{lem:centre:basic} and we have $\varphi(Z[\Gamma]) \leq Z[\Gamma]$ 
(noting the assumption that $Z \wr \Gamma$ is Hopfian implies $\Gamma$ is Hopfian, by Lemma \ref{lem:G:Hopfian}). We now proceed as in the proof of Lemma \ref{lem:epi:semi}:

\[\begin{tikzcd}
	{\Delta \wr \Gamma} && {\Delta \wr \Gamma} \\
	\\
	{(\Delta/Z) \wr \Gamma} && {(\Delta \wr \Gamma)/\varphi(Z[\Gamma])}
	\arrow["\varphi", two heads, from=1-1, to=1-3]
	\arrow[two heads, from=1-1, to=3-1]
	\arrow[two heads, from=1-3, to=3-3]
	\arrow[two heads, from=3-1, to=3-3]
\end{tikzcd}\]

By assumption $(\Delta / Z) \wr \Gamma$ is Hopfian. Since $(\Delta \wr \Gamma)/\varphi(Z[\Gamma])$ surjects onto $(\Delta/Z) \wr \Gamma$, it follows that $\varphi(Z[\Gamma]) = Z[\Gamma]$, and $\varphi$ descends to an automorphism of $(\Delta / Z) \wr \Gamma$, which is basic by hypothesis. It follows that $\varphi$ itself is basic, and so we may write $\varphi(\gamma) = (b(\gamma), \alpha(\gamma))$, for a map $b \colon \Gamma \to \Delta[\Gamma]$ and an automorphism $\alpha \colon \Gamma \to \Gamma$.

Define $\psi \colon Z \wr \Gamma \to Z \wr \Gamma$ by $\psi|_{Z[\Gamma]} = \varphi|_{Z[\Gamma]}$, and $\psi|_\Gamma = \alpha$. Then $\psi$ is a homomorphism: Indeed, it is clearly a homomorphism on $Z[\Gamma]$ and on $\Gamma$, and it satisfies the conjugacy relation because $\Delta[\Gamma]$ centralizes $Z[\Gamma]$. Moreover it is surjective, since $\varphi(Z[\Gamma]) = Z[\Gamma]$ and $\alpha(\Gamma) = \Gamma$. Since $Z \wr \Gamma$ is Hopfian by assumption, $\psi$ is injective. Since $\varphi$ induces an automorphism of $(\Delta / Z) \wr \Gamma = (\Delta \wr \Gamma) / Z[\Gamma]$, we have $\ker(\varphi) \leq Z[\Gamma]$ and using that $\psi|_{Z[\Gamma]} = \varphi|_{Z[\Gamma]}$, we conclude that $\varphi$ is also injective.
\end{proof}

We now easily obtain Theorem \ref{thm:nilpotentbasis}:

\begin{proof}[Proof of Theorem \ref{thm:nilpotentbasis}]

If $\Gamma$ is finite, then $\Delta \wr \Gamma$ is finitely generated and residually finite by \cite{grunberg}, therefore it is Hopfian (here we are using that finitely generated nilpotent groups are residually finite \cite[Section 15.4]{robinson}). Moreover since $Z_1$ is abelian, and $Z_1 \wr \Gamma$ is Hopfian, by Lemma \ref{lem:G:Hopfian} we obtain that $\Gamma$ is Hopfian. Thus from now on, we let $\Gamma$ be a fixed finitely generated infinite Hopfian group.

We will prove the following stronger statement by induction. Let $\Delta$ be a finitely generated nilpotent group, and let $\{ Z_i \}_{i = 0}^c$ be the upper central series of $\Delta$. 
Suppose that $(Z_i / Z_{i-1}) \wr \Gamma$ is Hopfian for all $i = 1, \ldots, c$. Then $\Delta \wr \Gamma$ is Hopfian, and moreover every automorphism of $\Delta \wr \Gamma$ is basic. The case $c = 1$ is already included in Proposition \ref{prop:centre:hopfian}.
Now suppose that $c > 1$. Then $\Delta / Z$ is a finitely generated nilpotent group of class $(c - 1)$ with upper central series $\{ Z_i \}_{i = 1}^c$. Therefore by induction $(\Delta / Z) \wr \Gamma$ is Hopfian, and every automorphism is basic. We conclude by Proposition \ref{prop:centre:hopfian} that $\Delta \wr \Gamma$ is Hopfian, and every automorphism is basic.
\end{proof}

\section{Co-Hopfian groups}

Following circulation of a preliminary version of this article, 
we have been asked several times what can be said about the \emph{co-Hopf} property 
for wreath products (recall that a group $\Gamma$ is \emph{co-Hopfian} if every 
monomorphism $\Gamma \rightarrow \Gamma$ is an isomorphism). 
Examples of co-Hopfian groups include finite groups, and several important groups from geometric group theory, including one-ended hyperbolic groups \cite{hyphopf1, hyphopf3}, mapping class groups \cite{cohopf:mcg} and outer automorphism groups of free groups \cite{cohopf:out}.
Non-examples include free groups, or more generally right-angled Artin groups, and infinite finitely generated abelian groups.

\medskip

In one respect, the co-Hopf property for wreath products is better behaved than the Hopf property, in that examples as in Theorem \ref{intro:thm:nonhopfiantop} do not occur:

\begin{proposition}
Let $\Gamma$ and $\Delta$ be groups and suppose $\Delta \wr \Gamma$ is co-Hopfian. 
Then $\Gamma$ and $\Delta$ are co-Hopfian. 
\end{proposition}

\begin{proof}
Let $\phi \colon \Delta \rightarrow \Delta$ and $\psi \colon \Gamma \rightarrow \Gamma$ 
be monomorphisms. Then we have monomorphisms 
$\Phi , \Psi \colon \Delta \wr \Gamma \rightarrow \Delta \wr \Gamma$ given by: 
\[\Phi(f,\gamma) = (\phi \circ f,\gamma) \text{ and } \Psi(f,\gamma) = (\overline{f},\psi(\gamma))\]
where $\overline{f} : \Gamma \rightarrow \Delta$ is given by: 
\[\overline{f} (\gamma) = 
\begin{cases}
f(\gamma') \text{ if } \gamma = \psi(\gamma'); \\
1_{\Delta} \text{ if } \gamma \notin \im(\psi).
\end{cases}
\]
If $\phi$ (respectively $\psi$) is not surjective, then neither is $\Phi$ (respectively $\Psi$). 
\end{proof}

It seems not unreasonable to expect that, if $\Gamma$ is a co-Hopfian group and $A$ is 
an abelian finite (hence co-Hopfian) group, 
then the existence (or not) of \emph{basic} monomorphisms 
$A \wr \Gamma \rightarrow A \wr \Gamma$ which are not surjective 
should be related to stable finiteness properties of $\Gamma$. 
Unfortunately, there does not presently seem to be much hope of 
reducing \emph{all} monomorphisms to 
basic monomorphisms, since the image of the base group under a monomorphism 
need not be normal in $A \wr \Gamma$, 
hence Lemma \ref{lem:abeliannormal} is no longer directly relevant.
We leave the analogue of Question \ref{q:main} as a further open question:

\begin{question}
Let $\Delta, \Gamma$ be finitely generated groups. When is the wreath product $\Delta \wr \Gamma$ co-Hopfian?
\end{question}

\footnotesize

\bibliographystyle{amsalpha}
\bibliography{ref}

\providecommand{\bysame}{\leavevmode\hbox to3em{\hrulefill}\thinspace}
\providecommand{\MR}{\relax\ifhmode\unskip\space\fi MR }
% \MRhref is called by the amsart/book/proc definition of \MR.
\providecommand{\MRhref}[2]{%
  \href{http://www.ams.org/mathscinet-getitem?mr=#1}{#2}
}
\providecommand{\href}[2]{#2}
\begin{thebibliography}{CSCP23}

\bibitem[Abe79]{Abels}
H.~Abels, \emph{An example of a finitely presented solvable group}, Homological
  group theory ({P}roc. {S}ympos., {D}urham, 1977), London Math. Soc. Lecture
  Note Ser., vol.~36, Cambridge Univ. Press, Cambridge-New York, 1979,
  pp.~205--211. \MR{564423}

\bibitem[AG20]{arzhantsevagal}
G.~Arzhantseva and S.~R. Gal, \emph{On approximation properties of semidirect
  products of groups}, Ann. Math. Blaise Pascal \textbf{27} (2020), no.~1,
  125--130. \MR{4140873}

\bibitem[AM69]{AtiyahMcD}
M.F. Atiyah and I.G. MacDonald, \emph{Introduction to commutative algebra},
  Addison-Wesley Series in Mathematics, Addison-Wesley, 1969.

\bibitem[AP17]{linearsofic}
G.~Arzhantseva and L.~P\u{a}unescu, \emph{Linear sofic groups and algebras},
  Trans. Amer. Math. Soc. \textbf{369} (2017), no.~4, 2285--2310. \MR{3592512}

\bibitem[Bro84]{brodskii}
S.~D. Brodski\u{\i}, \emph{Equations over groups, and groups with one defining
  relation}, Sibirsk. Mat. Zh. \textbf{25} (1984), no.~2, 84--103. \MR{741011}

\bibitem[BS62]{BS}
G.~Baumslag and D.~Solitar, \emph{Some two-generator one-relator non-{H}opfian
  groups}, Bull. Amer. Math. Soc. \textbf{68} (1962), 199--201. \MR{142635}

\bibitem[CFP96]{thompson}
J.~W. Cannon, W.~J. Floyd, and W.~R. Parry, \emph{Introductory notes on
  {R}ichard {T}hompson's groups}, Enseign. Math. (2) \textbf{42} (1996),
  no.~3-4, 215--256. \MR{1426438}

\bibitem[Col69]{undecidable}
D.~J. Collins, \emph{On recognizing {H}opf groups}, Arch. Math. (Basel)
  \textbf{20} (1969), 235--240. \MR{246943}

\bibitem[Con59]{conrad}
P.~Conrad, \emph{Right-ordered groups}, Michigan Math. J. \textbf{6} (1959),
  267--275. \MR{106954}

\bibitem[CSC07a]{df:ceccherinicoornaert}
T.~Ceccherini-Silberstein and M.~Coornaert, \emph{Injective linear cellular
  automata and sofic groups}, Israel J. Math. \textbf{161} (2007), 1--15.
  \MR{2350153}

\bibitem[CSC07b]{csc3}
\bysame, \emph{Linear cellular automata over modules of finite length and
  stable finiteness of group rings}, J. Algebra \textbf{317} (2007), no.~2,
  743--758. \MR{2362939}

\bibitem[CSC07c]{csc1}
\bysame, \emph{On the surjunctivity of {A}rtinian linear cellular automata over
  residually finite groups}, Geometric group theory, Trends Math.,
  Birkh\"{a}user, Basel, 2007, pp.~37--44. \MR{2395787}

\bibitem[CSC08]{csc2}
\bysame, \emph{Amenability and linear cellular automata over semisimple modules
  of finite length}, Comm. Algebra \textbf{36} (2008), no.~4, 1320--1335.
  \MR{2406587}

\bibitem[CSC10]{cell}
\bysame, \emph{Cellular automata and groups}, Springer Monographs in
  Mathematics, Springer-Verlag, Berlin, 2010. \MR{2683112}

\bibitem[CSCP23]{df:phung:2}
T.~Ceccherini-Silberstein, M.~Coornaert, and X.~K. Phung, \emph{First-order
  model theory and {K}aplansky's stable finiteness conjecture}, arXiv preprint
  arXiv:2310.09451, 2023.

\bibitem[DJ15]{df:dykemajuschenko}
K.~Dykema and K.~Juschenko, \emph{On stable finiteness of group rings}, Algebra
  Discrete Math. \textbf{19} (2015), no.~1, 44--47. \MR{3376338}

\bibitem[Edl19]{BS:full:detail}
A.~Edletzberger, \emph{Residual finiteness in hyperbolic groups}, Master's
  thesis, Universit{\"a}t Wien, 2019.

\bibitem[ES04]{df:elekszabo}
G.~Elek and E.~Szab\'{o}, \emph{Sofic groups and direct finiteness}, J. Algebra
  \textbf{280} (2004), no.~2, 426--434. \MR{2089244}

\bibitem[FH07]{cohopf:out}
B.~Farb and M.~Handel, \emph{Commensurations of {${\rm Out}({\rm F}_n)$}},
  Publ. Math. Inst. Hautes \'{E}tudes Sci. (2007), no.~105, 1--48. \MR{2354204}

\bibitem[Gar21]{unit}
G.~Gardam, \emph{A counterexample to the unit conjecture for group rings}, Ann.
  of Math. (2) \textbf{194} (2021), no.~3, 967--979. \MR{4334981}

\bibitem[Got73]{gottschalk}
W.~Gottschalk, \emph{Some general dynamical notions}, Recent advances in
  topological dynamics ({P}roc. {C}onf. {T}opological {D}ynamics, {Y}ale
  {U}niv., {N}ew {H}aven, {C}onn., 1972; in honor of {G}ustav {A}rnold
  {H}edlund), Lecture Notes in Math., Vol. 318, Springer, Berlin, 1973,
  pp.~120--125. \MR{0407821}

\bibitem[Gro99]{gromov}
M.~Gromov, \emph{Endomorphisms of symbolic algebraic varieties}, J. Eur. Math.
  Soc. (JEMS) \textbf{1} (1999), no.~2, 109--197. \MR{1694588}

\bibitem[Gru57]{grunberg}
K.~W. Gruenberg, \emph{Residual properties of infinite soluble groups}, Proc.
  London Math. Soc. (3) \textbf{7} (1957), 29--62. \MR{87652}

\bibitem[GT21]{genevoistessera}
A.~Genevois and R.~Tessera, \emph{A note on morphisms to wreath products},
  arXiv preprint arXiv:2110.09822, 2021.

\bibitem[GZ02]{basilica}
R.~I. Grigorchuk and A.~\.{Z}uk, \emph{On a torsion-free weakly branch group
  defined by a three state automaton}, vol.~12, 2002, International Conference
  on Geometric and Combinatorial Methods in Group Theory and Semigroup Theory
  (Lincoln, NE, 2000), pp.~223--246. \MR{1902367}

\bibitem[Hal62]{hall}
P.~Hall, \emph{Wreath powers and characteristically simple groups}, Proc.
  Cambridge Philos. Soc. \textbf{58} (1962), 170--184. \MR{139649}

\bibitem[Hig51]{fp:nonhopfian}
G.~Higman, \emph{A finitely related group with an isomorphic proper factor
  group}, J. London Math. Soc. \textbf{26} (1951), 59--61. \MR{38347}

\bibitem[HS18]{wreathsofic}
B.~Hayes and A.~W. Sale, \emph{Metric approximations of wreath products}, Ann.
  Inst. Fourier (Grenoble) \textbf{68} (2018), no.~1, 423--455. \MR{3795485}

\bibitem[IM99]{cohopf:mcg}
N.~V. Ivanov and J.~D. McCarthy, \emph{On injective homomorphisms between
  {T}eichm\"{u}ller modular groups. {I}}, Invent. Math. \textbf{135} (1999),
  no.~2, 425--486. \MR{1666775}

\bibitem[JZLA20]{jaikin}
A.~Jaikin-Zapirain and D.~L\'{o}pez-\'{A}lvarez, \emph{The strong {A}tiyah and
  {L}\"{u}ck approximation conjectures for one-relator groups}, Math. Ann.
  \textbf{376} (2020), no.~3-4, 1741--1793. \MR{4081128}

\bibitem[Kap95]{kaplansky}
I.~Kaplansky, \emph{Fields and rings}, Chicago Lectures in Mathematics,
  University of Chicago Press, Chicago, IL, 1995, Reprint of the second (1972)
  edition. \MR{1324341}

\bibitem[LL19]{df:noetherian}
H.~Li and B.~Liang, \emph{Sofic mean length}, Adv. Math. \textbf{353} (2019),
  802--858. \MR{3993930}

\bibitem[Mal40]{malcev}
A.~I. Malcev, \emph{On isomorphic matrix representations of infinite groups},
  Rec. Math. [Mat. Sbornik] N.S. \textbf{8 (50)} (1940), 405--422. \MR{0003420}

\bibitem[Mal48]{skew1}
\bysame, \emph{On the embedding of group algebras in division algebras},
  Doklady Akad. Nauk SSSR (N.S.) \textbf{60} (1948), 1499--1501. \MR{0025457}

\bibitem[Mal14]{malman}
B.~Malman, \emph{Zero-divisors and idempotents in group rings}, Master's
  thesis, Lund University, 2014.

\bibitem[Mes72]{BS:full}
S.~Meskin, \emph{Nonresidually finite one-relator groups}, Trans. Amer. Math.
  Soc. \textbf{164} (1972), 105--114. \MR{285589}

\bibitem[MKS66]{CGT}
W.~Magnus, A.~Karrass, and D.~Solitar, \emph{Combinatorial group theory:
  {P}resentations of groups in terms of generators and relations}, Interscience
  Publishers [John Wiley \& Sons], New York-London-Sydney, 1966. \MR{0207802}

\bibitem[Moi13]{hyphopf3}
C.~Moioli, \emph{Graphs of groups and co-hopfian groups}, Ph.D. thesis,
  Universit{\'e} Paul Sabatier - Toulouse III, 2013.

\bibitem[MS71]{hopf:embedding}
C.~F. Miller, III and P.~E. Schupp, \emph{Embeddings into {H}opfian groups}, J.
  Algebra \textbf{17} (1971), 171--176. \MR{269728}

\bibitem[Mur21]{unit2}
A.~G. Murray, \emph{More counterexamples to the unit conjecture for group
  rings}, arXiv preprint arXiv:2106.02147 (2021).

\bibitem[Neu49]{skew2}
B.~H. Neumann, \emph{On ordered division rings}, Trans. Amer. Math. Soc.
  \textbf{66} (1949), 202--252. \MR{32593}

\bibitem[Neu50]{fg:nonhopfian}
\bysame, \emph{A two-generator group isomorphic to a proper factor group}, J.
  London Math. Soc. \textbf{25} (1950), 247--248. \MR{38346}

\bibitem[Pas85]{passman}
D.~S. Passman, \emph{The algebraic structure of group rings}, Robert E. Krieger
  Publishing Co., Inc., Melbourne, FL, 1985, Reprint of the 1977 original.
  \MR{798076}

\bibitem[Pas02]{review}
\bysame, \emph{Review of the article ``{S}table finiteness of group rings in
  arbitrary characteristic'', by {P}. {A}ra, {K}. {C}. {O}'{M}eara and {F}.
  {P}erera}, Mathematical Reviews (2002).

\bibitem[Phu23]{df:phung}
X.~K. Phung, \emph{A geometric generalization of {K}aplansky's direct
  finiteness conjecture}, Proc. Amer. Math. Soc. \textbf{151} (2023), no.~7,
  2863--2871. \MR{4579362}

\bibitem[Rob96]{robinson}
D.~J.~S. Robinson, \emph{A course in the theory of groups}, second ed.,
  Graduate Texts in Mathematics, vol.~80, Springer-Verlag, New York, 1996.
  \MR{1357169}

\bibitem[Sel99]{hyphopf1}
Z.~Sela, \emph{Endomorphisms of hyperbolic groups. {I}. {T}he {H}opf property},
  Topology \textbf{38} (1999), no.~2, 301--321. \MR{1660337}

\bibitem[VG97]{LEF}
A.~M. Vershik and E.~I. Gordon, \emph{Groups that are locally embeddable in the
  class of finite groups}, Algebra i Analiz \textbf{9} (1997), no.~1, 71--97.
  \MR{1458419}

\bibitem[Voo18]{additive}
B.~Voorhees, \emph{Additive cellular automata}, Cellular automata, Encycl.
  Complex. Syst. Sci., Springer, New York, 2018, pp.~129--151. \MR{3985324}

\bibitem[Wei00]{weiss}
B.~Weiss, \emph{Sofic groups and dynamical systems}, vol.~62, 2000, Ergodic
  theory and harmonic analysis (Mumbai, 1999), pp.~350--359. \MR{1803462}

\bibitem[WR19]{hyphopf2}
R.~Weidmann and C.~Reinfeldt, \emph{Makanin-{R}azborov diagrams for hyperbolic
  groups}, Ann. Math. Blaise Pascal \textbf{26} (2019), no.~2, 119--208.
  \MR{4140867}

\end{thebibliography}

\normalsize

\vspace{0.5cm}

\noindent{\textsc{Christ's College, Cambridge, UK}}

\noindent{\textit{E-mail address:} \texttt{hb470@cam.ac.uk}} \\

\noindent{\textsc{Department of Pure Mathematics and Mathematical Statistics, University of Cambridge, UK}}

\noindent{\textit{E-mail address:} \texttt{ff373@cam.ac.uk}}

\end{document}